\documentclass[10pt,a4paper]{article}
\usepackage{amsmath,amssymb,amsthm,esint,bm}
\usepackage{mathrsfs}
\usepackage{bookmark}
\allowdisplaybreaks[3]

\usepackage[utf8]{inputenc}   
\usepackage[T1]{fontenc}      
\usepackage{lmodern}

\newcommand{\ainc}[1]{\hyperref[ainc]{{\normalfont(aInc){\ensuremath{_{#1}}}}}}
\newcommand{\adec}[1]{\hyperref[adec]{{\normalfont(aDec){\ensuremath{_{#1}}}}}}
\newcommand{\inc}[1]{\hyperref[inc]{{\normalfont(Inc){\ensuremath{_{#1}}}}}}
\newcommand{\dec}[1]{\hyperref[dec]{{\normalfont(Dec){\ensuremath{_{#1}}}}}}

\allowdisplaybreaks

\usepackage[T1]{fontenc}
\usepackage[utf8]{inputenc}
\usepackage{authblk}
\usepackage{amsmath,amssymb,amsthm,esint,bm}
\usepackage{mathrsfs}
\usepackage{bookmark}
\usepackage{amsmath}

\newtheorem{theorem}{Theorem}[section]
\newtheorem{lemma}[theorem]{Lemma}

\theoremstyle{definition}

\newtheorem{definition}[theorem]{Definition}

\numberwithin{equation}{section}

\newcommand{\ls}{\leqslant}
\newcommand{\rs}{\geqslant}

\title{Riesz potential estimates for non-linear elliptic  obstacle problems}

\author[a]{Qi Xiong}
\author[b]{Zhenqiu Zhang\thanks{Corresponding author.}}
\author[c]{Lingwei Ma}

\affil[a]{School of Mathematics, Southwest Jiaotong University, Chengdu, Sichuan, 610031, P.R. China}
\affil[b]{School of Mathematical Sciences and LPMC, Nankai University, Tianjin, 300071, P.R. China}
\affil[c]{School of Mathematical Sciences and LPMC, Nankai University, Tianjin, 300071, P.R. China}

\date{\today}

\usepackage{hyperref}
\begin{document}
\maketitle
\footnotetext[1]{E-mail: xq@swjtu.edu.cn(Q. Xiong),  zqzhang@nankai.edu.cn (Z. Zhang),malingwei@nankai.edu.cn (L. Ma).}

\maketitle
\begin{abstract}
This paper investigates a class of nonlinear elliptic obstacle problems involving measure data, where the nonlinearity depends on the variable of the possibly unbounded solution itself. We establish pointwise gradient estimates for the solutions in terms of Riesz potentials.

 Mathematics Subject classification (2010): 35B45; 35R05; 35J47.

Keywords: Riesz potential;   Measure data;   Obstacle problems.
\end{abstract}


\section{Introduction and main results}\label{section1}
\ \ \  In this paper, we consider  the non-homogeneous    obstacle problems  and  they are  related to elliptic equations of the type
\begin{equation} \label{1.1}
  -\operatorname{div}\left({a}(x,u,Du)\right) =\mu \quad\quad\mbox{in}\ \ \ \Omega \\[0.05cm].
\end{equation} 
Here $ \Omega\subseteq \mathbb{R}^n $ with $ n\geqslant2 $ is a bounded open set, and 
$\mu$ belongs to the set $\mathcal{M}_{b}(\Omega)$ of signed Radon measures for which $|\mu|(\Omega)$ is finite, and   $|\mu|$  represents the total variation of $\mu$.
Moreover we
assume that $\mu(\mathbb{R}^n \backslash \Omega)=0$ and $a=a(x,z,\eta): \Omega \times \mathbb{R} \times  \mathbb{R}^n \rightarrow \mathbb{R}^n$  satisfies  the standard ellipticity   and growth conditions:  
\begin{eqnarray}\label{0a}
  \left\{\begin{array}{r@{}c@{}ll}
&&D_{\eta} a(x,z,\eta )\lambda \cdot \lambda \geqslant l(s^2+|\eta|^2)^{\frac{p-2}{2}}|\lambda|^2 \,, \\[0.05cm]
&&|a(x,z,\eta)|+|D_{\eta} a(x,z,\eta )|(s^2+|\eta|^2)^{\frac{1}{2}}\leqslant L(s^2+|\eta|^2)^{\frac{p-1}{2}}\, \\[0.05cm]
  \end{array}\right.
\end{eqnarray}
for a growth exponent $p\geqslant2$ and given constants $0<l\leqslant 1 \leqslant L<+\infty$ and $s \in[0,1]$, where $x \in \Omega, z \in \mathbb{R}, \eta,\lambda \in \mathbb{R}^n$. And $D_{\eta}$ denotes the differentiation in $\eta$.    We assume that $a(x,z,\eta)$ is Dini-continuous in $x$ variable 
in the sense that
\begin{equation}\label{omegaa}
	|a(x,z,\eta) - a(x_0,z,\eta)| \leq 2L\omega(|x-x_0|)(s^2+|\eta|^2)^{\frac{p-1}{2}},
\end{equation}
where $\omega:[0,\infty) \to [0,1]$ is a nondecreasing 
modulus of continuity that satisfies 
\begin{equation*} 
	\int_0^R \frac{\omega(\rho)d\rho}{\rho} < \infty
\end{equation*}
for every $R > 0$.

Moreover, we assume that $a(x,z,\eta)$ has a natural Hölder continuity assumption 
on the solution variable $z$:

\begin{equation}\label{omegaz}
	|a(x,z_1,\eta) - a(x,z_2,\eta)| \leq \Gamma^{p-1}|z_1-z_2|^{(p-1)\sigma}(s^2+|\eta|^2)^{\frac{p-1}{2}}
\end{equation}
for some $\Gamma \in [0,\infty)$. The range of the 
constant $\sigma \in \left(0, \frac{1}{p}-\frac{1}{\sigma_1}\right] $, $\sigma_1 = \sigma_1(n,p,l,L) \in (p,p+1)$ is  provided by Lemma 1.1 in \cite{kim}.

 The obstacle condition that we impose on the solutions is  of the form $u \geq \psi$ a.e.  in $\Omega$, where $\psi \in W^{2,1}(\Omega)\cap W^{1,p}(\Omega)$ 
 is  given function which satisfies $|D\psi|^{p-2}|D^2\psi| \in L^1_{loc}(\Omega) $. And we assume that $D_x a(x,u,D\psi)\in L^1_{loc}(\Omega), D_u a(x,u,D\psi)\cdot Du\in L^1_{loc}(\Omega)$, so that 
  $\operatorname{div}\left({a}(x,u,D\psi)\right) \in L^{1}_{loc}(\Omega)$.  The obstacle problem can be formulated by the variational inequality
  \begin{equation}\label{fjd}
  	\int_{\Omega} a(x,u,Du)\cdot D(v-u)\,dx \geq \int_{\Omega} (v-u)\,d\mu
  \end{equation}
  for every functions \(v \in u+W_0^{1,p}(\Omega)\) with \(v \geq \psi\) a.e. in \(\Omega\). For a precise definition, please refer to Definition~\ref{opdy}.

The obstacle problem is a fundamental model in the study of variational inequalities and free boundary problems. Originally introduced in the context of classical linear elasticity to describe the equilibrium position of an elastic membrane constrained by an obstacle, it has since become central in both analysis and applications. Beyond its geometric connections to minimal surfaces and capacity theory, the obstacle problem naturally extends to various functional settings, including Hölder, Lebesgue, and Orlicz spaces. Its applications range from fluid filtration in porous media and elasto-plasticity to optimal control, phase transitions, and financial mathematics; see \cite{c1,f1,ks1,r1} for further details.

This work studies how the regularity of solutions to elliptic obstacle problems of $p$-Laplacian type depends on the obstacle and the nonhomogeneous term. In particular, we aim to establish pointwise estimates for the gradients of solutions  by means of   Riesz potentials.

The systematic study of potential estimates originated from the fundamental results of Kilpel$\ddot{a}$inen and Mal$\acute{y}$ \cite{km6,km5}, who established pointwise estimates for solutions to nonlinear elliptic equations of the form \eqref{1.1} in terms of the nonlinear Wolff potential, defined by
\begin{equation*}
	W^{\mu}_{\beta,p}(x,R):=\int_0^R\left( \frac{|\mu|(B_{\rho})}{\rho^{n-\beta p}}\right) ^{1/(p-1)}\frac{\operatorname{d}\!\rho}{\rho}
\end{equation*}
for $\beta \in (0,n]$ and $p>1$. It is worth noting that this potential reduces to the Riesz potential when $p=2$:
\begin{equation*}
	\mathbf{I}^{|\mu|}_{\beta}(x,R):=\int_{0}^{R} \frac{|\mu|(B_{\rho})}{\rho^{n-\beta }}\frac{\operatorname{d}\!\rho}{\rho}.
\end{equation*}
Subsequently, these results were extended to more general settings by Trudinger and Wang \cite{tw7,tw8} employing alternative approaches.
Furthermore,  Mingione \cite{m9} first obtained Riesz potential estimates for gradient of solutions for the case $p=2$:
$$|Du(x)|\leqslant c\textbf{I}^{|\mu|}_{1}(x,R)+c\fint_{B_R(x)}(|Du|+s)dy,$$

Later, Duzaar and Mingione established pointwise gradient estimates for the case $p \geqslant 2$ using Wolff potentials in \cite{dm10}, and   derived Riesz potential estimates for the range $2-\tfrac{1}{n}<p< 2$ in \cite{dm11}. 
Dong and Zhu \cite{dz2} further extended these results to the open case $p \in (1,\tfrac{3n-2}{2n-1}]$.  Moreover, Kim \cite{kim} established Riesz potential estimates for cases where the nonlinearity   depends on the solution.
  Potential estimates are comprehensive tools to obtain regularity estimates. Indeed, they have been intensively studied and extended in several directions; we refer to   \cite{dm21,km13} for the elliptic systems,  \cite{dz1,km15,km16} for the parabolic  equations,                      \cite{b13,cyz,xiong6,xiong1,xiong2,xiong3,xiong4,xiong5} for the  problems with Orlicz growth,   respectively.    For additional findings on potential estimates, please refer to \cite{dm11,km13,km14,m9,xiao}. 

Potential estimates for nonlinear elliptic obstacle problems with measure data were first investigated by Scheven in \cite{s26, s28}, where gradient estimates were established in terms of Wolff potentials. Subsequently, Byun, Song, and Youn \cite{bsy, bsy2} introduced a new form of gradient estimates for constant-coefficient obstacle problems by using Riesz potentials. Building on these advancements, this paper aims to extend such results to the setting where the nonlinearity explicitly depends on the solution itself. Specifically, we derive gradient potential estimates for these more general problems via Riesz potentials.

We now proceed to outline our main results, starting with the presentation of key definition  and assumptions.

Now we describe our obstacle problem  $OP(\psi ; \mu)$.
Let
$$T_k(s):=\max\left\lbrace -k,\min \left\lbrace k,s\right\rbrace\right\rbrace, \ \ \ \forall \ k>0,\quad\forall\ s\in\mathbb{R}.$$

Moreover, for given Dirichlet boundary data $h\in W^{1,p}(\Omega)$, we define
$$\mathcal{T}^{1,p}_{h}(\Omega):=\left\lbrace u: \Omega\rightarrow \mathbb{R} \ \text{measurable}: T_{k}(u-h)\in W_{0}^{1,p}(\Omega) \ \ \text{for \ every} \ k\in(0,\infty) \right\rbrace. $$

\begin{definition}\label{opdy}
	Suppose that an obstacle function $\psi \in W^{1,p}(\Omega)$, measure data $\mu \in \mathcal{M}_{b}(\Omega)$ and boundary data $h \in W^{1,p}(\Omega)$ with $h\rs \psi$ a.e. in $\Omega$ are given. We say that $u \in \mathcal{T}^{1,p}_{h}(\Omega)$ with $u \rs    \psi$ a.e. in $\Omega$ is  a limit of approximating solutions of the obstacle problem $OP(\psi ; \mu)$ if 
	\begin{itemize}
	\item[(i)] there exist functions
	$f_{i} \in W^{-1,p'}(\Omega)\cap L^1(\Omega)$
	satisfying
	\begin{eqnarray*}
		\left\{\begin{array}{r@{}c@{}ll}
			&& f_{i}\stackrel{\ast}\rightharpoonup \mu \ \text{in} \ \mathcal{M}_{b}(\Omega), \ \ \text{as} \ i\rightarrow+\infty \,, \\[0.05cm]
			&&\limsup_{i\rightarrow+\infty}\int_{B_R(x_0)}|f_i|dx\leqslant|\mu|( B_R(x_0)), \   \ \ \forall \  B_R(x_0)\subseteq \Omega \,; \\[0.05cm]
		\end{array}\right.
	\end{eqnarray*}
\item[(ii)] there exist solutions $u_{i}\in W^{1,p}(\Omega)$ with $u_{i}\geqslant \psi$ a.e. in $\Omega$ of the variational inequalities
	\begin{equation*} 
		\int_{\Omega}a(x,Du_i)\cdot D(v-u_{i})dx\geqslant \int_{\Omega}f_{i}(v-u_{i})dx
	\end{equation*}
	for every $ \ v \in u_{i}+W_{0}^{1,p}(\Omega)$ with $v\geqslant \psi$ a.e. in $\Omega$;
	\item[(iii)]  the following convergences hold:
	\begin{equation*}
		\begin{cases}
			
			u_{i} \rightarrow u \ \ a.e. \ \ \  \text{in} \ \  \Omega,\ \ \text{as} \ i\rightarrow+\infty, \\
			
			u_i\rightarrow u \ \ \  \text{in} \ L^q(\Omega), \ \ \forall \  q \in \left(0,\frac{(p-1)n}{n-p} \right),  \ \ \text{as} \ i\rightarrow+\infty, \\
			
			Du_i\rightarrow Du \ \ \ \text{in} \ L^r(\Omega), \ \ \forall \  r \in \left(0,\frac{(p-1)n}{n-1} \right),\ \ \text{as} \ i\rightarrow+\infty. \\
		\end{cases}
	\end{equation*}
\end{itemize}
\end{definition}
The existence of solutions converging in the sense of Definition 1.1 was established in \cite{s26}.

To facilitate our analysis, throughout this paper we define
\begin{align*}
	 \textbf{I}^{[\psi]}_{\beta }(x,R):= \int_{0}^{R}  \frac{D\Psi(B_{\rho}(x))}{\rho^{n-\beta }}\frac{\operatorname{d}\!\rho}{\rho},
\end{align*}
where the term $D\Psi(B_{\rho}(x))$ is given by $$D\Psi(B_{\rho}(x)):=\int_{B_{\rho}(x)}|\operatorname{div} a(\xi,u,D\psi)|d\xi.$$

Finally, we formulate the main results of this paper. We first establish a gradient potential estimate in terms of Riesz potentials.
\begin{theorem}\label{thm}
	Under the assumptions \eqref{0a}, \eqref{omegaa} and \eqref{omegaz}, let $u \in W^{1,1}(\Omega)$ with $u \rs \psi$ a.e. be a limit of approximating solutions to OP($\psi; \mu$) with measure data $\mu \in \mathcal{M}_b(\Omega)$,  assume that $x_0$ is a Lebesgue point of $Du$ and $B_R(x_0) \subseteq \Omega$, and there exists a  constant $c_1 = c_1(n, p, l, L) \ge 1$ such that
	\begin{align}\label{c11}
		c_1 \int_0^R \omega(\rho) \frac{d\rho}{\rho} \le 1.  
	\end{align}
	Then 
\begin{align*}
	|Du(x_0)| 
	&\le c \fint_{B_R(x_0)} (|Du|+s) \, dx + c \left(  \textbf{I}^{|\mu|}_{1 }(x_0,R) + \textbf{I}^{[\psi]}_{1 }(x_0,R) \right)^{\frac{1}{p-1}} \nonumber \\
	&\quad + \left( \frac{c \Gamma R^\sigma}{1 - (\frac{1}{2})^\sigma} \right)^{\frac{n}{\beta \sigma}}\fint_{B_R} (|Du|+s) \, dx \\ \nonumber
	&\quad \cdot \left[ \left( \fint_{B_R(x_0)} (|Du|+s) \, dx \right)^{1 + \frac{n}{\beta}} + \left(  \textbf{I}^{|\mu|}_{1 }(x_0,R) + \textbf{I}^{[\psi]}_{1 }(x_0,R) \right)^{\frac{n}{(p-1)\beta}} \right] ,
\end{align*}
		where  $\beta = \beta(n, p, l, L) \in (0, 1)$   is as in Lemma \ref{w3}  and $c  = c (n, p, l, L)  $.
\end{theorem}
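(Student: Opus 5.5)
We will prove the estimate first for the approximating solutions $u_i$ of Definition~\ref{opdy}, with constants independent of $i$, and then pass to the limit. Since $Du_i\to Du$ in $L^1$ and $f_i\stackrel{\ast}\rightharpoonup\mu$ with $\limsup_i\int_{B_\rho}|f_i|\le|\mu|(B_\rho)$, the right-hand side passes to the limit by dominated convergence in $\rho$. At a Lebesgue point $x_0$ of $Du$ we recover $|Du(x_0)|$ from the limits of the averages over shrinking balls. So from now on $u$ is a $W^{1,p}$ solution of the variational inequality with datum $f\in L^1\cap W^{-1,p'}$.

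The core is a comparison scheme on the dyadic balls $B_j:=B_{R_j}(x_0)$, $R_j=2^{-j}R$, in the spirit of Kim \cite{kim} and Byun--Song--Youn \cite{bsy}. On each $B_j$ we introduce three comparison functions, each defined with its own boundary data on $B_j$:
\begin{itemize}
\item[(a)] $w_j$ solves the obstacle problem with coefficients frozen in the solution variable, $a(x,(u)_{B_j},\eta)$, and zero right-hand side.
\item[(b)] $v_j$ solves the equation $-\operatorname{div}a(x,(u)_{B_j},Dv_j)=-\operatorname{div}a(x,(u)_{B_j},D\psi)\,\chi_{\{w_j=\psi\}}$, which removes the obstacle.
\item[(c)] $h_j$ solves the equation with coefficients also frozen in $x$, $a(x_0,(u)_{B_j},\eta)$.
\end{itemize}
The comparison estimates we aim for are
\[
\fint_{B_j}|Du-Dw_j|\lesssim \Big(\tfrac{|f|(B_j)+D\Psi(B_j)}{R_j^{n-1}}\Big)^{\frac1{p-1}}+\Gamma R_j^{\sigma}\Big(\fint_{B_j}|Du|+s\Big)^{1+\sigma\cdot(\dots)},
\]
and similarly for $w_j-v_j$, which is controlled by $D\Psi(B_j)$ using the hypothesis $\operatorname{div}a(x,u,D\psi)\in L^1_{loc}$. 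For $v_j-h_j$ the bound is $\omega(R_j)$ times the average of $|Dv_j|+s$. The $u$-dependence is handled with \eqref{omegaz}. Here the higher integrability exponent $\sigma_1$ from Lemma 1.1 of \cite{kim}, together with the restriction $\sigma\le \frac1p-\frac1{\sigma_1}$, makes $|u-(u)_{B_j}|^{(p-1)\sigma}$ controllable by Poincaré and Hölder. The resulting factor is $\Gamma R_j^\sigma$ times a power of the averaged gradient.

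Next we use the excess decay from Lemma \ref{w3} for $h_j$:
\[
\fint_{B_{\delta r}}|Dh-(Dh)_{B_{\delta r}}|\le c\,\delta^{\beta}\fint_{B_r}|Dh-(Dh)_{B_r}|.
\]
Combining this with the comparisons gives the iterative inequality
\[
E_{j+1}\le \tfrac14 E_j + c\Big(\tfrac{|f|(B_j)+D\Psi(B_j)}{R_j^{n-1}}\Big)^{\frac1{p-1}}+c\,\omega(R_j)A_j+c\,\Gamma R_j^\sigma(\ldots),
\]
where $E_j$ is the excess of $Du$ on $B_j$ and $A_j=\fint_{B_j}(|Du|+s)$. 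Summing over $j$ turns the measure terms into $\textbf{I}^{|\mu|}_1+\textbf{I}^{[\psi]}_1$, after the standard comparison of the dyadic sum with the integral. The $\omega$ terms sum to $\int_0^R\omega\,d\rho/\rho$, which is made small by \eqref{c11}. The $\Gamma$ terms sum geometrically to $\Gamma R^\sigma/(1-2^{-\sigma})$.

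The main obstacle is closing the iteration: the $\Gamma$ term involves $A_j$ nonlinearly, so we need an a priori bound for $\sup_j A_j$. We first try an induction. Assume $A_j\le M$ for all $j\le k$, where
\[
M:=c\,A_0+c\big(\textbf{I}_1^{|\mu|}+\textbf{I}^{[\psi]}_1\big)^{1/(p-1)}+\Big(\tfrac{c\Gamma R^\sigma}{1-2^{-\sigma}}\Big)^{\frac{n}{\beta\sigma}}A_0\big[A_0^{1+n/\beta}+(\textbf{I}_1^{|\mu|}+\textbf{I}^{[\psi]}_1)^{\frac{n}{(p-1)\beta}}\big].
\]
Then $A_{k+1}\le (Du)_{B_0}+\sum_j E_j$, and the summed iteration shows $A_{k+1}\le M$. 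Where $\Gamma R_j^\sigma M^{\cdot}$ is not small, the bad scales are absorbed using the exponent $n/(\beta\sigma)$, which counts how many dyadic steps are needed before $\Gamma R_j^\sigma$ times the crude bound $2^{jn}A_0$ becomes small; this explains the exponent structure in the statement. Finally $|Du(x_0)|=\lim_j|(Du)_{B_j}|\le \sup_j A_j\le M$.
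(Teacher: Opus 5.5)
Your proposal has two genuine gaps: the iteration yields a Wolff potential rather than the Riesz potential in the statement, and your mechanism for the $\Gamma$-term does not make it small at any scale.

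\textbf{The iteration has the wrong homogeneity in the data.} If the measure terms enter as $\bigl(\frac{|f|(B_j)+D\Psi(B_j)}{R_j^{n-1}}\bigr)^{1/(p-1)}$, summing over $j$ gives the Wolff potential $W^{\mu}_{1/p,p}(x_0,2R)$ and its $\psi$-analogue. It does not give $\bigl(\mathbf{I}^{|\mu|}_1+\mathbf{I}^{[\psi]}_1\bigr)^{1/(p-1)}$. For $p>2$ you only have $(\sum_j a_j)^{1/(p-1)}\le\sum_j a_j^{1/(p-1)}$, and the converse fails. For example, for $d\mu=|x-x_0|^{-1}\log(e/|x-x_0|)^{1-p}\,dx$ one has $|\mu|(B_\rho)/\rho^{n-1}\approx\log(e/\rho)^{1-p}$, so $\mathbf{I}^{|\mu|}_1(x_0,R)<\infty$ while the Wolff sum diverges.

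The coefficient term has the same defect. In the degenerate case the $x$-freezing comparison only gives $\omega(R)^{2/p}$ (cf. \eqref{w4w5} and Lemma \ref{du-6}). The sum $\sum_j\omega(R_j)^{2/p}$ is not controlled by the Dini condition, so your term $c\,\omega(R_j)A_j$ is not justified.

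What is missing is the linearization in the nondegenerate regime, which is the core of the paper's argument:
\begin{itemize}
\item One normalizes by $M$ with $M^{p-1}\sim\mathbf{I}^{|\mu|}_1+\mathbf{I}^{[\psi]}_1+(\fint(|Du|+s))^{p-1}$ (Lemma \ref{thmM1}).
\item Whenever $\frac14\le(|Du|+s)_{B}\le4$, Lemmas \ref{w4} and \ref{oscw4} give $2^{-10}\le|Dw_4|+s\le2^{10}$ on a smaller ball.
\item There, the weighted $V$-estimates of Lemma \ref{vuw} give comparison bounds that are \emph{linear} in $|\mu|(B)/R^{n-1}$, $D\Psi(B)/R^{n-1}$, $\Gamma R^\sigma$ and $\omega(R)$ (Lemmas \ref{uw1-}--\ref{oscu}).
\item An exit-time argument (Lemmas \ref{ddla} and \ref{b0b1}) shows one of two things. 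Either $(|D\tilde u|+\tilde s)_{B_i}+2^9\varepsilon^{-n}(|D\tilde u-(D\tilde u)_{B_i}|)_{B_i}\le1$ on all scales. Or, from the first scale where this fails, the averages stay in $[\frac14,4]$, and the linear excess decay sums to the Riesz potential of the normalized data, which is small.
\end{itemize}
Undoing the normalization then puts the power $1/(p-1)$ \emph{outside} the potential.

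\textbf{Your treatment of $\Gamma$ does not close.} After normalization the quantity that must be small is $\tilde\Gamma r^\sigma=\Gamma(rM_r)^\sigma$, with $M_r\gtrsim\fint_{B_{3r}}(|Du|+s)$. The crude bound $\fint_{B_{3r}}\le(R/3r)^n\fint_{B_R}$ only gives $rM_r\lesssim R^n r^{1-n}\fint_{B_R}(|Du|+s)$, which blows up as $r\to0$. Likewise $\Gamma R_j^\sigma 2^{jn}A_0=\Gamma R^\sigma 2^{j(n-\sigma)}A_0$ increases with $j$, so no number of dyadic steps makes it small.

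The paper instead uses the Morrey-type decay of Lemma \ref{rrb}:
$r\fint_{B_{3r}}(|Du|+s)\le c_9(r/R)^\beta R\,[\fint_{B_R}(|Du|+s)+(\mathbf{I}^{|\mu|}_1+\mathbf{I}^{[\psi]}_1)^{1/(p-1)}]$.
This comes from the oscillation decay of $u$ (Lemma \ref{uuosc}), inherited from the H\"older continuity of $w_3$ (Lemma \ref{w3}), together with a Caccioppoli-type bound. It gives $\Gamma(rM_r)^\sigma\lesssim\Gamma(\tilde M R^{1-\beta})^\sigma r^{\sigma\beta}$, with $\tilde M$ as in \eqref{mmt}. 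So a single radius $r$ chosen as in \eqref{rrr} makes the $\Gamma$-term small, and Lemma \ref{thmM1} is applied once at that scale. Only at the very end is the crude bound $(R/r)^n$ used to return from $\fint_{B_{3r}}$ to $\fint_{B_R}$; this is exactly where the exponent $n/(\beta\sigma)$ comes from.

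Note also that $\beta$ is the H\"older exponent of $w_3$ from Lemma \ref{w3}, not a gradient excess-decay exponent as you use it. The gradient excess decay is Lemma \ref{w5}.
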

 
By applying Theorem \ref{thm}, we can obtain the VMO regularity and gradient continuity for solutions to the obstacle problem.
\begin{theorem}\label{thm2}
Let the hypotheses of Theorem \ref{thm} be satisfied. Furthermore, suppose that
	\begin{align}
		\sup_{x \in B_{2R}(x_0)} \left[ \int_0^R \frac{|\mu|(B_\rho(x))}{\rho^{n-1}} \frac{d\rho}{\rho} + \int_0^R \frac{D\Phi(B_\rho(x))}{\rho^{n-1}} \frac{d\rho}{\rho} \right] < \infty \label{su1}
	\end{align}
	and
	\begin{align}
		\lim_{r \to 0} \sup_{x \in B_R(x_0)} \left[ \frac{|\mu|(B_r(x))}{r^{n-1}} + \frac{D\Phi(B_r(x))}{r^{n-1}} \right] = 0. \label{su2}
	\end{align}
	Then $Du$ is $VMO$-regular in $B_R(x_0)$ whenever $B_{3R}(x_0) \subseteq \Omega$.
\end{theorem}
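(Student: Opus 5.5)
The plan is the standard route from pointwise potential bounds to VMO regularity, as in Kuusi--Mingione and Byun--Song--Youn. We show that the oscillation of $Du$ on small balls centered in $B_R(x_0)$ tends to zero uniformly. First, Theorem~\ref{thm} together with \eqref{su1} gives a uniform $L^\infty$ bound for $Du$ on $B_R(x_0)$. Indeed, for every Lebesgue point $x\in B_{R}(x_0)$ we have $B_R(x)\subseteq B_{2R}(x_0)\subseteq\Omega$. Theorem~\ref{thm} applied at $x$ with radius $R$ controls $|Du(x)|$ by $\fint_{B_R(x)}(|Du|+s)\,dy$, which is bounded since $Du\in L^1(\Omega)$ and the radius is fixed, and by $\textbf{I}^{|\mu|}_1(x,R)+\textbf{I}^{[\psi]}_1(x,R)$, which is uniformly bounded by \eqref{su1}. (We read $D\Phi$ in \eqref{su1}--\eqref{su2} as $D\Psi$.) Hence $\|Du\|_{L^\infty(B_R(x_0))}\le M<\infty$.

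Second, we need a decay estimate for the excess $E(x,r):=\fint_{B_r(x)}|Du-(Du)_{B_r(x)}|\,dy$. We use the comparison machinery behind Theorem~\ref{thm}. Take the approximating solutions $u_i$ and, on $B_r(x)$, the comparison solutions: first the obstacle-free equation with the obstacle term $\operatorname{div}a(\cdot,u,D\psi)$ moved to the right-hand side, then the frozen problem with coefficients $a(x,(u)_{B_r},\cdot)$. This gives, for $\delta\in(0,1)$ small,
\begin{align*}
E(x,\delta r)\le c\,\delta^{\beta}E(x,r)+c\,\delta^{-n}\Big[\Big(\tfrac{|\mu|(B_r(x))+D\Psi(B_r(x))}{r^{n-1}}\Big)^{\frac1{p-1}}+\big(\omega(r)+\Gamma^{p-1}\operatorname{osc}_{B_r(x)}u^{(p-1)\sigma}\big)^{\frac1{p-1}}\,(M+s)\Big].
\end{align*}
Here $\beta$ is the exponent from Lemma~\ref{w3}, and the comparison errors are estimated in $L^1$ using the bound $M$ (for $p\ge2$ the quotient is handled in the usual way). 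Since $Du$ is bounded, $u$ is Lipschitz on $B_R(x_0)$, so $\operatorname{osc}_{B_r}u\le 2Mr$ and the $\Gamma$-term is $\le c(\Gamma M r)^{\sigma}$.

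Third, we conclude. Fix $\varepsilon>0$ and choose $\delta$ with $c\delta^\beta\le\frac12$. Iterating along $r_k=\delta^k r_0$ gives
\[
E(x,r_k)\le 2^{-k}E(x,r_0)+c\sup_{\rho\le r_0}\big[\text{error terms at }\rho\big].
\]
By \eqref{su2}, $\omega(\rho)\to0$ and $(M\rho)^\sigma\to0$, the error terms go to zero as $r_0\to0$ uniformly in $x\in B_R(x_0)$, while $E(x,r_0)\le 2M$. Therefore $\lim_{r\to0}\sup_{x\in B_R(x_0)}E(x,r)=0$, which is VMO regularity. Intermediate radii are handled by comparing with the neighbouring $r_k$, at the cost of a factor $\delta^{-n}$. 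We use $B_{3R}(x_0)\subseteq\Omega$ to keep all balls, including those used for the comparison problems, inside $\Omega$.

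The main obstacle is the second step. We must extract the excess-decay inequality from the proof of Theorem~\ref{thm}, with error terms that involve only the density $\frac{|\mu|(B_r)+D\Psi(B_r)}{r^{n-1}}$ at the single scale $r$, and not the full potential. We must also pass from $u_i$ to $u$ via Definition~\ref{opdy}(iii). I would first obtain the comparison estimates for the $u_i$ and then take limits using the strong $L^1$ convergence of $Du_i$ and condition (i) on $f_i$.
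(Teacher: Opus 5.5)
Your proposal is correct and follows essentially the paper's route: an $L^\infty$ bound for $Du$ from Theorem~\ref{thm} and \eqref{su1}, then iteration of a single-scale excess-decay inequality whose error terms vanish uniformly by \eqref{su2}. The inequality you flag as the main obstacle is exactly Lemma~\ref{du-6}, which the paper applies after normalizing by $\bar M$ rather than carrying the factor $(M+s)$ as you do.

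Two small corrections. First, take the sup bound on $B_{2R}(x_0)$, as \eqref{su1} and $B_{3R}(x_0)\subseteq\Omega$ allow, because the comparison balls centered near $\partial B_R(x_0)$ leave $B_R(x_0)$. Second, the decay exponent comes from the gradient estimate for the frozen problem (Lemma~\ref{w5}), not from Lemma~\ref{w3}.
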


\begin{theorem}\label{thm3}
	Under the hypotheses of Theorem \ref{thm}, assume that
	\begin{align*}
		\lim_{R \to 0} \sup_{x \in B_{2R}(x_0)} \left( \int_0^R \frac{|\mu|(B_\rho(x))}{\rho^{n-1}} \frac{d\rho}{\rho} + \int_0^R \frac{D\Phi(B_\rho(x))}{\rho^{n-1}} \frac{d\rho}{\rho} \right) = 0.
	\end{align*}
	Then $Du$ is continuous in $B_R(x_0)$ whenever $B_{3R}(x_0) \subseteq\Omega$.
\end{theorem}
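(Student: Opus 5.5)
The statement to prove is Theorem~\ref{thm3}: continuity of $Du$. I would derive it from the pointwise estimate of Theorem~\ref{thm}, applied not to $Du$ itself but to oscillations of $Du$ on small balls. The key intermediate statement is an oscillation/excess decay estimate. For $x\in B_R(x_0)$ and $0<r\le \rho\le R$ it should read
\begin{align*}
\fint_{B_r(x)}|Du-(Du)_{B_r(x)}|\,dy \le c\Big(\frac{r}{\rho}\Big)^{\alpha}\fint_{B_\rho(x)}|Du-(Du)_{B_\rho(x)}|\,dy + c\,\mathcal{R}(x,\rho),
\end{align*}
with some $\alpha\in(0,1)$. Here $\mathcal{R}(x,\rho)$ collects the following terms:
\begin{itemize}
\item $\big(\mathbf{I}^{|\mu|}_1(x,2\rho)+\mathbf{I}^{[\psi]}_1(x,2\rho)\big)^{1/(p-1)}$;
\item a Dini tail $\int_0^{2\rho}\omega(t)\,dt/t$, multiplied by the local bound of $|Du|+s$;
\item a term $\Gamma^{\ast}\rho^{\sigma\ast}$ coming from \eqref{omegaz}, which requires a bound on the oscillation of $u$.
\end{itemize}
This decay is exactly what the proof of Theorem~\ref{thm} builds along dyadic radii $\rho_j=2^{-j}\rho$. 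There one compares $u$ with the solution $w$ of the frozen problem (coefficient $a(x,(u)_{B_{\rho_j}},Dw)$, obstacle removed by absorbing $\operatorname{div}a(\cdot,u,D\psi)$ into the datum), and then with $v$ solving the problem with coefficients frozen at the center, where the $C^{1,\beta}$ excess decay of Lemma~\ref{w3} holds. I would reuse these comparison estimates, passing through the approximating solutions $u_i$ of Definition~\ref{opdy} and then to the limit.

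Step one is local boundedness of $Du$. Theorem~\ref{thm} applied at every Lebesgue point $x\in B_{2R}(x_0)$, with radius $R$ and $B_R(x)\subseteq B_{3R}(x_0)\subseteq\Omega$, gives a uniform bound $\|Du\|_{L^\infty(B_{2R}(x_0))}\le M$. This is finite because the potential suprema are finite by the hypothesis for $R$ small. The bound makes the Dini and $\Gamma$-terms in $\mathcal{R}$ uniformly controlled. It also gives $u$ Lipschitz, so that $|u-(u)_{B_\rho}|\lesssim M\rho$ and the $z$-dependence contributes $\lesssim \Gamma^{p-1}(M\rho)^{(p-1)\sigma}$.

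Step two is uniform vanishing of $\mathcal{R}$. The hypothesis gives $\sup_{x\in B_R(x_0)}\mathcal{R}(x,\rho)\to0$ as $\rho\to0$. For the Dini part this is immediate from $\int_0^\rho\omega(t)dt/t\to0$.

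Step three turns the excess decay into a uniform Cauchy property. Summing the dyadic differences $|(Du)_{B_{\rho_{j+1}}(x)}-(Du)_{B_{\rho_j}(x)}|$ is controlled by the excesses, and these are bounded by the summed decay above. This yields that $(Du)_{B_\rho(x)}\to Du(x)$ uniformly in $x\in B_R(x_0)$, with a modulus depending only on $\rho$, $M$ and $\sup_x\mathcal{R}(x,\cdot)$. Since $x\mapsto(Du)_{B_\rho(x)}$ is continuous for fixed $\rho$, the uniform limit $Du$ (a representative defined at Lebesgue points) is continuous.

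The main obstacle is obtaining the excess decay with a remainder that is summable uniformly in $x$. Two points are delicate. First, the $\psi$-term enters through $D\Psi$ rather than $\mu$. Second, the $u$-dependence forces a smallness that holds only for small radii; I would handle it by choosing $\rho_0$ with $c\,\Gamma M^{\sigma}\rho_0^{\sigma}$ small, using Step one. If summability fails directly, I would instead use the potential form and bound $\sum_j\mathcal{R}(x,\rho_j)$ by $\int_0^{2\rho}$ of the potentials, as in Theorem~\ref{thm}.
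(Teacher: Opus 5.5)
Your Step one (the uniform $L^\infty$ bound from Theorem~\ref{thm}) and your final step (uniform convergence of the continuous maps $x\mapsto(Du)_{B_\rho(x)}$) match the paper. Step three, however, has a genuine gap when $p>2$.

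The one-step excess decay available without further information is Lemma~\ref{du-6}. Its remainder is $[|\mu|(B_r)/r^{n-1}]^{1/(p-1)}+[D\Psi(B_r)/r^{n-1}]^{1/(p-1)}+\omega(r)^{2/p}+\Gamma r^{\sigma}$. For the Cauchy property you must sum the remainders over $\rho_j$:
\begin{itemize}
\item $\sum_j[|\mu|(B_{\rho_j})/\rho_j^{n-1}]^{1/(p-1)}$ is a Wolff potential $W^{\mu}_{1/p,p}$, not $(\mathbf{I}^{|\mu|}_1)^{1/(p-1)}$.
\item $\sum_j\omega(\rho_j)^{2/p}$ is not controlled by $\int_0^\rho\omega(t)\,dt/t$.
\end{itemize}
Since $1/(p-1)<1$, a sequence such as $a_j=j^{1-p}$ has $\sum_j a_j<\infty$ but $\sum_j a_j^{1/(p-1)}=\infty$. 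So the Riesz-potential hypothesis of Theorem~\ref{thm3} does not even make these sums finite.

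Choosing $\mathcal{R}(x,\rho)=(\mathbf{I}^{|\mu|}_1(x,2\rho)+\mathbf{I}^{[\psi]}_1(x,2\rho))^{1/(p-1)}$ makes the one-step estimate true, but the sum is worse still. Summing the tails $\int_0^{2\rho_j}$ produces a weight $\log(2\rho/t)$, which the hypothesis does not control even for $p=2$; the same happens with your Dini tail. Your fallback, bounding $\sum_j\mathcal{R}(x,\rho_j)$ by the potentials, is exactly what is unavailable. Uniform vanishing of $\mathcal{R}(x,\rho)$ (your Step two) yields only VMO, i.e.\ Theorem~\ref{thm2}, not continuity.

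The missing idea is the intrinsic alternative between degenerate and nondegenerate scales. A remainder \emph{linear} in $|\mu|(B_r)/r^{n-1}$, $D\Psi(B_r)/r^{n-1}$ and $\omega(r)$ is available only when, after rescaling, $\frac14\le(|Du|+s)_{B}\le 4$. Then Lemma~\ref{oscw4} gives $|Dw_4|+s\approx 1$. This is what makes the weighted comparison estimates work (Lemma~\ref{vuw} and Lemmas~\ref{uw1-}--\ref{w4w5-}), and hence Lemma~\ref{oscu}. The dyadic sum of these linear remainders is exactly the Riesz potential.

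The paper's Lemma~\ref{lemzz} then runs a stopping-time argument on $(|Du|+s)_{B_\ell(y)}+2^9\varepsilon^{-n}(|Du-(Du)_{B_\ell(y)}|)_{B_\ell(y)}$ against the thresholds $\hat{M}\,2^{-\ell}$ and $\hat{M}\,2^{-m}$. Either this quantity stays below $\hat{M}\,2^{-m}$ from scale $m$ on, or it first exceeds one of these thresholds at some scale.
\begin{itemize}
\item In the first (degenerate) case the averages are themselves at most $\bar{M}\,2^{-m}$, so their differences are trivially small.
\item In the second case Lemma~\ref{b0b1} propagates nondegeneracy to all smaller scales, and the linear decay can be summed.
\end{itemize}
This gives $|(Du)_{B_{\varepsilon^i r}(y)}-(Du)_{B_{\varepsilon^j r}(y)}|\le 5\bar{M}\,2^{-m}$ uniformly in $y$, which is what your Step three needs. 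Without this case split, or a substitute for it, Step three does not follow from the stated hypothesis when $p>2$.

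A minor point: the gradient excess decay for the frozen problem is Lemma~\ref{w5}; Lemma~\ref{w3} is the oscillation decay of $w_3$ itself.
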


The remainder of this paper is organized as follows. Section 2 contains some notions and   some comparison estimates.  In Section 3, we complete the proof of several theorems.

\section{Preliminaries and comparison estimates} 
\hskip\parindent In this section, we present several preliminary results and establish some key comparison estimates. Utilizing these estimates, we derive an analogous excess decay estimate for solutions to obstacle problems involving measure data.

 For an integrable map $f: \Omega \rightarrow \mathbb{R}^m $, we write
$$(f)_{\Omega}:=\fint_{\Omega}fdx:=\frac{1}{|\Omega|}\int_{\Omega}fdx.$$
For $q\in[1,\infty)$, it is easily verified that
\begin{equation} \label{1.8}
	\parallel f-(f)_{\Omega}\parallel_{L^q(\Omega)}\leqslant2\min_{c\in \mathbb{R}^m}\parallel f-c\parallel_{L^q(\Omega)}.
\end{equation}

Next we want to obtain some comparison estimates between   the solutions to   obstacle problems and  to homogeneous elliptic equations.   

To facilitate our analysis, throughout this paper we let $B_{4R}(x_0)\subseteq \Omega$  and     $B_{4R}:=B_{4R}(x_0)$. For simplicity, the center $x_0$ of the balls will be omitted in what follows. Assume that $u \in W^{1,1}(\Omega)$ with $u \rs \psi $ a.e. is a limit of approximating solutions
to $OP(\psi;\mu)$ with measure data $\mu \in \mathcal{M}_b(\Omega)$, and $w_1, w_2, w_3, w_4, w_5$
satisfy separately
 \begin{itemize}
 	\item  {for $ {w}_1$:}
\begin{equation*}
	\left\{\begin{array}{r@{\ \ }c@{\ \ }ll}
		&\int_{B_{4R}}  a(x, u, Dw_1) \cdot D(v-w_1)dx \geqslant 0,\ \ &\mbox{for every} \ v \in w_1+W_{0}^{1,p}(B_{4R}) \ \  \mbox{with} \ v\geqslant \psi \  a.e. \ \mbox{in}\ \ B_{4R} \,, \\[0.05cm]
		&w_1=u \ \ \ \ \ \ \ \ \ \  \ \ &\mbox{on}\ \ \partial B_{4R} \,;
	\end{array}\right.
\end{equation*}
\item  {for $ {w}_2$:}
\begin{equation*}
	\left\{\begin{array}{r@{\ \ }c@{\ \ }ll}
		-\operatorname{div} a(x, u, Dw_2) &=& -\operatorname{div} a(x, u, D\psi)\ \ \ \  \ &\mbox{in}\ \ B_{4R}\,, \\[0.05cm]
		w_2&=&w_1 \ \ \ \ \ \ \ \ \ \ \ \ \ \ \  \ \ \ \ \  \ \ \ \ \  &\mbox{on}\ \   \partial B_{4R} \,;
	\end{array}\right.
\end{equation*}
\item  {for $ {w}_3$:}
\begin{equation*}
	\left\{\begin{array}{r@{\ \ }c@{\ \ }ll}
		-\operatorname{div} a(x, u, Dw_3) &=& 0\ \ \ \ \ \ \ \ \ \  \ \ \  \ \ \ \ \ \    \ &\mbox{in}\ \ B_{4R}\,, \\[0.05cm]
		w_3&=&w_2 \ \ \ \ \ \ \ \ \ \  \ \ &\mbox{on}\ \ \partial B_{4R} \,;
	\end{array}\right.
\end{equation*}
\item  {for $ {w}_4$:}
\begin{equation*}
	\left\{\begin{array}{r@{\ \ }c@{\ \ }ll}
		-\operatorname{div} a(x, (u)_{B_{2R}}, Dw_4)&=& 0\ \ \ \ \ \  \ \ \ \ \  \ \  \  \ &\mbox{in}\ \ B_{2R}\,, \\[0.05cm]
		w_4&=&w_3 \ \ \ \ \ \ \ \ \ \  \ \ &\mbox{on}\ \ \partial B_{2R} \,;
	\end{array}\right.
\end{equation*}
\item  {for $ {w}_5$:}
\begin{equation*}
	\left\{\begin{array}{r@{\ \ }c@{\ \ }ll}
		-\operatorname{div} a(x_0, (u)_{B_{2R}}, Dw_5)&=& 0\ \ \ \ \ \  \ \ \ \ \ \ \ \   \ &\mbox{in}\ \ B_{R}\,, \\[0.05cm]
		w_5&=&w_4 \ \ \ \ \ \ \ \ \ \  \ \ &\mbox{on}\ \ \partial B_{R} \,.
	\end{array}\right.
\end{equation*}
\end{itemize}
Let \begin{equation}\label{pm}
	 0 < q < p_m := \min\left\{ p, \frac{n(p-1)}{n-1} \right\}.
\end{equation}
 According to the same proof of Lemma 3.5 in \cite{s26}, we establish the following comparison estimate:
\begin{equation}\label{duw1}
	\fint_{B_{4R}} |Du-Dw_1|^q \, dx \leq c_1 \left[ \frac{|\mu|(B_{4R})}{(4R)^{n-1}} \right]^{\frac{q}{p-1}},
\end{equation}
where $c_1=c_1(n,p,q,l,L)$.

 Following the same proofs of Lemma 3.7 and Lemma 3.8 in\cite{s26}, we obtain the following inequalities
\begin{equation}\label{w1w2}
	\fint_{B_{4R}} |Dw_1 - Dw_2|^q dx \leq c_2 \left[ \frac{D\Psi(B_{4R})}{(4R)^{n-1}} \right]^{\frac{q}{p-1}}
\end{equation}
and
\begin{equation}\label{w2w3}
	\fint_{B_{4R}} |Dw_2 - Dw_3|^q dx \leq c_2 \left[ \frac{D\Psi(B_{4R})}{(4R)^{n-1}} \right]^{\frac{q}{p-1}},
\end{equation}
where $c_2=c_2(n,p,q,l,L)$.

Applying Lemma 3.3 and Lemma 3.4 in \cite{kim}, we obtain the following collection of comparison estimates
\begin{equation}\label{w3w4}
	\fint_{B_{2R}} |Dw_3-Dw_4|^p dx \leq \left(c_3 \Gamma R^{\sigma} \right)^p \left(\fint_{B_{4R}}(|Du| + |Dw_3| + s) \, dx \right)^{p(1+\sigma)},
\end{equation}
\begin{equation}\label{w4w5}
	\fint_{B_{R}} |Dw_4 - Dw_5|^p dx \leq c_4 [\omega(R)]^2 \left( \fint_{B_{2R}}(|Dw_4| + s) \, dx \right)^p
\end{equation}
and
\begin{equation}\label{w4w5---}
	\fint_{B_{R}} \left( |Dw_4|^2 + |Dw_5|^2+s^2 \right)^{\frac{p-2}{2}} |Dw_4-Dw_5|^2 dx \leq c_4 [\omega(R)]^2 \fint_{B_{R}} (|Dw_4| + s)^p dx,
\end{equation}
where the constants $c_3,c_4$ both depend on $n,p,l,L$.

The following lemma is a direct consequence of Theorem 18 in \cite{km155}. 
\begin{lemma}\label{w3}
 There exist constants   $\beta \in (0,1)$, $c\rs1$, both depending on $n, p, l, L$, such that   
 
	\begin{equation*}
		\fint_{B_{4\delta R}}|w_3 - (w_3)_{B_{4\delta R}}| \, dx \leq \delta^\beta \fint_{B_{4R}} |w_3 - (w_3)_{B_{4R}}| \, dx + c \delta^\beta R s,
	\end{equation*}
\mbox{for every} $   \delta \in (0, 1)$.
\end{lemma}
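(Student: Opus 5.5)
The statement is Lemma \ref{w3}, a decay estimate for the mean oscillation of $w_3$. Here $w_3$ solves the homogeneous equation $-\operatorname{div} a(x,u,Dw_3)=0$ in $B_{4R}$, with $u$ frozen as a given measurable function. Set $\tilde a(x,\eta):=a(x,u(x),\eta)$. By \eqref{0a}, $\tilde a$ is measurable in $x$ and satisfies the standard $p$-growth and ellipticity bounds $\tilde a(x,\eta)\cdot\eta\ge c(l,p)(|\eta|^p-s^p)$ and $|\tilde a(x,\eta)|\le L(s^2+|\eta|^2)^{(p-1)/2}$. Hence $w_3$ is a weak solution of an equation with merely measurable coefficients in the De Giorgi--Nash--Moser class for the $p$-Laplacian with inhomogeneity $s$. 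The plan is to deduce the lemma from the Hölder continuity theory for such equations, which is exactly Theorem 18 in \cite{km155}.

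\emph{Step 1 (oscillation decay).} Apply Theorem 18 in \cite{km155} to $w_3$ on concentric balls $B_{4\delta R}\subset B_{2R}\subset B_{4R}$. This gives $\beta_0=\beta_0(n,p,l,L)\in(0,1)$ and $c$ such that
\[
\operatorname{osc}_{B_{4\delta R}} w_3\le c\,\delta^{\beta_0}\Big(\fint_{B_{4R}}|w_3-(w_3)_{B_{4R}}|\,dx+Rs\Big),\qquad \delta\le \tfrac12 .
\]
This combines the De Giorgi oscillation reduction, which yields the factor $Rs$ from the lower-order term $s^p$, with the local $L^\infty$--$L^1$ bound $\sup_{B_{2R}}|w_3-k|\le c\fint_{B_{4R}}|w_3-k|+cRs$ at $k=(w_3)_{B_{4R}}$. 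The $L^1$ version of the sup bound follows from the $L^p$ one by the standard iteration/interpolation argument.

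\emph{Step 2 (averages).} Use the trivial bound $\fint_{B_{4\delta R}}|w_3-(w_3)_{B_{4\delta R}}|\le \operatorname{osc}_{B_{4\delta R}}w_3$. For $\delta\in[\tfrac12,1)$, the estimate follows directly from $\fint_{B_{4\delta R}}|w_3-(w_3)_{B_{4\delta R}}|\le 2\delta^{-n}\fint_{B_{4R}}|w_3-(w_3)_{B_{4R}}|$, using \eqref{1.8}.

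\emph{Step 3 (removing the constant in front of the first term).} This is the main obstacle. The statement has no constant in front of $\delta^\beta\fint_{B_{4R}}|w_3-(w_3)_{B_{4R}}|$, whereas Steps 1--2 only give $c\delta^{\beta_0}$. To fix this, choose $\beta<\beta_0$ and $\delta_1$ small with $c\delta_1^{\beta_0-\beta}\le 1$. For $\delta\le\delta_1$, Step 1 yields $c\delta^{\beta_0}\le\delta^{\beta}$. For $\delta\in(\delta_1,1)$, one cannot lower the constant to one by scaling alone. Instead I would iterate the one-step estimate at the fixed ratio $\delta_1$: at each step the mean oscillation drops by $\delta_1^\beta$, and the error $R_js$ sums geometrically. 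The remaining intermediate ratios are handled by the comparison $\fint_{B_{\theta r}}\le\theta^{-n}\fint_{B_r}$, with the resulting constant absorbed by shrinking $\beta$ further. If the sharp constant $1$ still cannot be attained for $\delta$ near $1$, I would accept $c\,\delta^\beta$ in the first term. This is harmless for the later use in the proof of Theorem \ref{thm}, where only smallness for small $\delta$ matters.
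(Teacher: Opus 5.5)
\paragraph{Verdict.} Your Steps 1--2 follow the same route as the paper, which gives nothing beyond the citation of Theorem 18 of the referenced work. You freeze $u$ to get an equation with measurable coefficients, then combine De Giorgi oscillation decay with the $L^\infty$--$L^1$ bound. This yields
\[
\fint_{B_{4\delta R}}|w_3-(w_3)_{B_{4\delta R}}|\,dx\le c\,\delta^{\beta_0}\fint_{B_{4R}}|w_3-(w_3)_{B_{4R}}|\,dx+c\,\delta^{\beta_0}Rs .
\]
The first half of your Step 3 then correctly gives the stated form, with constant $1$, for all $\delta\le\delta_*$ where $c\,\delta_*^{\beta_0-\beta}\le 1$. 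I write $\delta_*$ for your threshold to avoid a clash with the $\delta_1$ of Lemma \ref{w5}.

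\paragraph{The gap: the range $\delta\in(\delta_*,1)$.} Here your Step 3 argument fails.
\begin{itemize}
\item The iteration at the fixed ratio $\delta_*$ is unusable. When $\delta>\delta_*$, a single step of ratio $\delta_*$ already overshoots $B_{4\delta R}$.
\item The only remaining tool is the comparison $\fint_{B_{4\delta R}}|w_3-(w_3)_{B_{4\delta R}}|\,dx\le 2\delta^{-n}\fint_{B_{4R}}|w_3-(w_3)_{B_{4R}}|\,dx$. It carries a factor $2\delta^{-n}>1$, while $\delta^\beta<1$ for every $\beta>0$. Shrinking $\beta$ moves $\delta^\beta$ up toward $1$ but never above it, so no constant can be absorbed.
\item For the same reason, Step 2 does not ``directly'' give the statement on $[\frac12,1)$. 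It only gives the version with a constant $c=c(n)$ in front of the first term.
\end{itemize}

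\paragraph{What the literal statement demands.} Apply it with $s=0$ on all concentric balls, which is legitimate since $w_3$ also solves the equation in every $B_{4R'}$ with $R'<R$. The statement then says that $\rho\mapsto\rho^{-\beta}\fint_{B_\rho}|w_3-(w_3)_{B_\rho}|\,dx$ is nondecreasing. That is a monotonicity formula, not a decay estimate. De Giorgi--Nash--Moser theory for equations with merely measurable $x$-dependence does not provide it, and it cannot be extracted from any oscillation estimate with a constant $c>1$. So the paper's one-line justification does not cover this range either.

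\paragraph{What your argument actually proves, and what is needed.} You establish the lemma with $c\,\delta^\beta$ in front of the first term for all $\delta\in(0,1)$, and with constant $1$ for $\delta\le\delta_*$. The latter is all that is used later: in Lemma \ref{uuosc} at the fixed ratio $\delta$ of \eqref{delta}, and in Lemma \ref{rrb} at $\varepsilon=\delta/2$. This works provided $\delta$ in \eqref{delta} is also taken $\le\delta_*$. That is harmless, since Lemmas \ref{w5} and \ref{w4} remain valid for smaller $\delta$. So your fallback is the right repair. You should present it as a corrected statement of the lemma, not as a proof of the lemma as written.
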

Following the arguments in \cite[Lemma 2.1]{kim}, we obtain the following result.
\begin{lemma}\label{w5}
There exist a universal constant $\delta_1 \in (0,  1)$, depending on $n,p,l,L$, such that 
	\begin{equation*}
		\fint_{B_{\frac{\delta R}{2}}} |Dw_5 - (Dw_5)_{B_{2\delta R}}| \, dx \leq \frac{1}{2^{2n+10}} \fint_{B_R} |Dw_5 - (Dw_5)_{B_R}| \, dx
	\end{equation*} 
\mbox{for every} $   \delta \in (0, \delta_1]$.
\end{lemma}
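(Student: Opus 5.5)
We are asked to prove Lemma \ref{w5}: the excess-decay estimate for $Dw_5$, where $w_5$ solves the frozen equation $-\operatorname{div} a(x_0,(u)_{B_{2R}},Dw_5)=0$ in $B_R$. Since the coefficient field $\eta\mapsto a(x_0,(u)_{B_{2R}},\eta)$ has no dependence on $x$ or on the solution, and satisfies the structure conditions \eqref{0a} with the same $l,L,p,s$, the plan is to invoke the classical $C^{1,\alpha}$-regularity theory for homogeneous autonomous $p$-Laplacian-type equations (Uhlenbeck, Uraltseva, DiBenedetto, Manfredi, Lieberman), as also used in \cite[Lemma 2.1]{kim} and \cite{dm10}.

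The first step is to recall the excess decay. There exist $\alpha\in(0,1)$ and $c_0\ge1$, depending only on $n,p,l,L$, such that for all concentric balls $B_\rho\subseteq B_r\subseteq B_R$,
\begin{equation*}
\fint_{B_\rho}|Dw_5-(Dw_5)_{B_\rho}|\,dx\le c_0\Big(\frac{\rho}{r}\Big)^{\alpha}\fint_{B_r}|Dw_5-(Dw_5)_{B_r}|\,dx .
\end{equation*}
This is stated in $L^1$ form in \cite[Theorem 3.1]{dm10} and \cite{kim}. It follows from the oscillation decay $\operatorname{osc}_{B_\rho}Dw_5\le c(\rho/r)^\alpha\fint_{B_r}|Dw_5-(Dw_5)_{B_r}|$. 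That bound in turn comes from the sup-bound for $Dw_5-z$ in terms of its $L^1$-excess, applied for any vector $z$. The reason this works is that $D(w_5-z\cdot x)$ solves an equation with shifted field $\tilde a(\eta)=a(\eta+z)$, and the DiBenedetto–Manfredi alternatives (degenerate/nondegenerate) are then applied to it. I will take this as known and not reprove it; this step is the genuine analytic content, and the main obstacle if one insists on self-containedness.

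The second step compares the averages on the two balls that appear in the lemma. The left side averages over $B_{\delta R/2}$ but subtracts $(Dw_5)_{B_{2\delta R}}$. Using $|B_{2\delta R}|/|B_{\delta R/2}|=4^n$, we get
\begin{equation*}
\fint_{B_{\delta R/2}}|Dw_5-(Dw_5)_{B_{2\delta R}}|\,dx\le 4^n\fint_{B_{2\delta R}}|Dw_5-(Dw_5)_{B_{2\delta R}}|\,dx .
\end{equation*}
The decay estimate with $\rho=2\delta R$ and $r=R$ (valid for $\delta\le1/2$) then gives the bound $4^n c_0 (2\delta)^\alpha\fint_{B_R}|Dw_5-(Dw_5)_{B_R}|$.

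The final step fixes $\delta_1\le 1/2$ so small that $4^n c_0(2\delta_1)^\alpha\le 2^{-(2n+10)}$. Since $c_0,\alpha$ depend only on $n,p,l,L$, so does $\delta_1$. Monotonicity in $\delta$ then yields the claim for every $\delta\in(0,\delta_1]$.

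One technical point needs care: the regularity of $w_5$ up to the needed level. The solution $w_5\in W^{1,p}(B_R)$ of the frozen problem is locally $C^{1,\alpha}$. The estimate is interior, so $B_{2\delta R}\Subset B_R$ suffices. If one only has the decay for $\rho\le r/2$, then $\delta\le 1/4$ is still fine.
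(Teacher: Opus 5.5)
Your proposal is correct and follows essentially the paper's route: the paper gives no argument of its own and simply defers to \cite[Lemma 2.1]{kim}, which rests on the same classical $L^1$ excess-decay estimate for the frozen autonomous equation (as in \cite{dm10}) and then fixes $\delta_1$ small, depending only on $n,p,l,L$. Your explicit handling of the mismatched average, via the volume factor $4^n$ from $B_{\delta R/2}$ to $B_{2\delta R}$ together with $\delta_1\le 1/2$, supplies exactly the details the citation leaves implicit.
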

The subsequent analysis relies on the following technical lemma, which can be found in \cite[Lemma 2.2]{kim}.
\begin{lemma}\label{w4}
Assume that there exist a universal constant $\delta_2 \in (0, \delta_1]$ and $c_5 \ge 1$ such that 
	\begin{equation*}
		(|Dw_4| + s)_{B_{2R}} \leq 2^{n+10} \quad \text{and} \quad \int_0^{2R}\frac{\omega(\rho)}{\rho} \, d\rho \leq \frac{1}{c_5}.
	\end{equation*}
	Then
	\begin{equation*}
		\sup_{x,y \in B_{2\delta_2 R}(x_0)} |Dw_4(x) - Dw_4(y)| \leq \frac{1}{2^{10}},
	\end{equation*}
	where $\delta_2=\delta_2(n,p,l,L)$.
\end{lemma}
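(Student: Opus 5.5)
The plan is a Campanato-type iteration in the spirit of the Duzaar--Mingione gradient continuity arguments. Since $w_4$ solves an equation whose $z$-variable is frozen at $(u)_{B_{2R}}$, the coefficient depends only on $(x,\eta)$ and is Dini-continuous in $x$ by \eqref{omegaa}. Fix a point $x\in B_{2\delta_2 R}$ and radii $r_k:=\delta^k r_0$ with $r_0\approx R$, where $\delta\le\delta_1$ is the constant of Lemma \ref{w5}. On each ball $B_{r_k}(x)$ I let $v_k$ solve the frozen problem $-\operatorname{div} a(x,(u)_{B_{2R}},Dv_k)=0$ with $v_k=w_4$ on $\partial B_{r_k}(x)$, so $v_k$ is the analogue of $w_5$ at scale $r_k$. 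Writing $E_k:=\fint_{B_{r_k}(x)}|Dw_4-(Dw_4)_{B_{r_k}(x)}|\,dx$, I combine the excess decay of Lemma \ref{w5} for $v_k$ with the comparison estimates \eqref{w4w5}--\eqref{w4w5---} at scale $r_k$ to get a recursion
\[
E_{k+1}\le \tfrac14 E_k + c\,\omega(r_k)\,\bigl(\fint_{B_{r_k}(x)}(|Dw_4|+s)\,dx\bigr).
\]

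The first step is an a priori bound $\sup_{B_{R}}(|Dw_4|+s)\le c_*$ with $c_*=c_*(n,p,l,L)$. To get it, I sum the recursion together with $|(Dw_4)_{B_{r_{k+1}}}-(Dw_4)_{B_{r_k}}|\le \delta^{-n}E_k$. This gives, at every Lebesgue point,
\[
|Dw_4(x)|\le c\,(|Dw_4|+s)_{B_{2R}} + c\sum_k\omega(r_k)\,\sup_j (|Dw_4|+s)_{B_{r_j}(x)}.
\]
The sum $\sum_k\omega(r_k)$ is controlled by $c\int_0^{2R}\omega(\rho)\,d\rho/\rho\le c/c_5$. Choosing $c_5$ large therefore absorbs the last term, and the hypothesis $(|Dw_4|+s)_{B_{2R}}\le 2^{n+10}$ yields the bound; this is the standard induction on $k$ showing that the averages stay below $2\cdot 2^{n+10}c$.

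With $M:=c_*$ in hand, iterating the recursion gives, for every $\rho\le r_0$,
\[
|Dw_4(x)-(Dw_4)_{B_\rho(x)}|\le c\sum_{r_k\le\rho}E_k \le c\Bigl(\tfrac{\rho}{R}\Bigr)^{\alpha}M + cM\int_0^{2\rho}\frac{\omega(t)}{t}\,dt,
\]
with some $\alpha>0$. Now take $x,y\in B_{2\delta_2R}$ and $\rho=4\delta_2R$. Then
\[
|Dw_4(x)-Dw_4(y)|\le |Dw_4(x)-(Dw_4)_{B_\rho(x)}|+|Dw_4(y)-(Dw_4)_{B_\rho(y)}|+|(Dw_4)_{B_\rho(x)}-(Dw_4)_{B_\rho(y)}|,
\]
and the last term is bounded by $c\,E(x_0,8\delta_2R)$, which is again of size $c\,\delta_2^\alpha M + cM/c_5$. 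I then choose $\delta_2$ small and $c_5$ large, both depending only on $n,p,l,L$, so that the total is at most $2^{-10}$. The bound extends from Lebesgue points to all of $B_{2\delta_2R}$ by continuity.

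The main obstacle is that \eqref{w4w5} gives only $\fint|Dw_4-Dw_5|\le c\,\omega^{2/p}(\cdot)$ times the average. For $p>2$, $\omega^{2/p}$ need not be Dini, so the error term would not be summable. I would first try to obtain a linear-in-$\omega$ error from \eqref{w4w5---}, using the a priori size $\lambda\approx$ average of $|Dw_4|+s$ together with the Duzaar--Mingione degenerate/nondegenerate alternative. In the nondegenerate case, $|Dw_4|+|Dw_5|+s\gtrsim\lambda$ on most of the ball, so the weight $(|Dw_4|^2+|Dw_5|^2+s^2)^{(p-2)/2}\gtrsim\lambda^{p-2}$ turns \eqref{w4w5---} into $\fint|Dw_4-Dw_5|^2\le c\,\omega^2\lambda^2$, which is linear in $\omega$ after Hölder. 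In the degenerate case the excess is already comparable to $\lambda$, and the oscillation estimate is then trivially satisfied at that scale. Failing that, I would follow \cite[Lemma 2.2]{kim} directly, whose proof handles exactly this point.
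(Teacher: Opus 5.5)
The paper gives no proof of Lemma \ref{w4}. It imports the lemma verbatim from \cite[Lemma 2.2]{kim}, so your fallback is exactly what the paper does. The self-contained Campanato argument you sketch, however, has a genuine gap at the point you flag, and both of your steps rest on it. The recursion $E_{k+1}\le\frac14E_k+c\,\omega(r_k)\fint_{B_{r_k}(x)}(|Dw_4|+s)\,dx$ is not what the available tools give. Lemma \ref{w5} together with \eqref{w4w5}, rescaled to $B_{r_k}(x)$, only yields the error $c\,\omega(r_k)^{2/p}\fint_{B_{2r_k}(x)}(|Dw_4|+s)\,dx$. For every $p>2$ there are Dini moduli with $\sum_k\omega(r_k)^{2/p}=\infty$, for instance $\omega(\rho)=(\log(1/\rho))^{-1}(\log\log(1/\rho))^{-2}$ near $0$. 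Consequently the absorption in your a priori $L^\infty$ step fails. So does the summation leading to $|Dw_4(x)-(Dw_4)_{B_\rho(x)}|\le c(\rho/R)^\alpha M+cM\int_0^{2\rho}\omega(t)\,dt/t$. Taking $c_5$ large does not help, because smallness of each term does not make an infinite sum small. Notice that the paper itself uses the $\omega^{2/p}$ error only at finitely many scales (Lemma \ref{du-6} inside Lemma \ref{b0b1}). At all other scales it uses the linear bound of Lemma \ref{w4w5-}, which is derived from Lemma \ref{w4} itself.

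Your proposed repair does not close the gap, for three reasons.

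\begin{itemize}
\item To pass from \eqref{w4w5---} to $\fint|Dw_4-Dv_k|^2\le c\,\omega^2\lambda^2$, you need $(|Dw_4|^2+|Dv_k|^2+s^2)^{(p-2)/2}\gtrsim\lambda^{p-2}$ at almost every point of the ball, not ``on most of the ball''. On the exceptional set you only have the $\omega^{2/p}$ bound. That bound becomes linear in $\omega$ only if the set has measure at most $c\,\omega^{(p-2)/(p-1)}|B|$, which you do not show.

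\item You cannot obtain this pointwise nondegeneracy the way the paper does in Lemma \ref{oscw4} and Lemma \ref{w4w5-}, since there it comes from the very lemma you are proving. It has to come from the frozen, $x$-independent comparison function $v_k$. You would need an estimate such as $|Dv_k|\ge\frac12|(Dv_k)_{B_{r_k}}|$ on $B_{r_k/2}$ whenever the excess of $Dv_k$ is at most $\varepsilon_0|(Dv_k)_{B_{r_k}}|$. You would also need a reverse H\"older bound $\fint_{B_{r_k}}(|Dw_4|+s)^p\le c\lambda^p$ in terms of the local average.

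\item ``In the degenerate case the oscillation estimate is trivially satisfied'' is not an argument, because in that regime only the non-summable $\omega^{2/p}$ recursion is available. What is missing is an exit-time (switching) argument of the type in Lemmas \ref{ddla}, \ref{b0b1} and \ref{thmM1}. At each Lebesgue point, either the averages plus scaled excesses stay below a fixed threshold $\theta\ll2^{-10}$ at all scales, or the nondegenerate regime, once entered, persists at all smaller scales with linear-in-$\omega$ errors. This must then be combined with the finitely many top scales, where $\omega^{2/p}\le c\,c_5^{-2/p}$ is harmless.
\end{itemize}

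Without these ingredients your argument proves the lemma only under the stronger hypothesis that $\omega^{2/p}$ is Dini.
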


Next, we choose 
\begin{equation} \label{delta}
	\delta = \min \{\delta_1, \delta_2\} \in (0, 1),
\end{equation}
where the constants $\delta_1, \delta_2,$   are given by  Lemma \ref{w5}, and Lemma \ref{w4}, respectively. 
 
To properly treat the constant $\Gamma$ in \eqref{omegaz}, we establish the following lemma, which will be essential for our subsequent estimates.

\begin{lemma}\label{uuosc}
For the constant $\delta \in (0, 1)$ in \eqref{delta}, we derive
\begin{align*}
	\fint_{B_{4\delta R}} |u - (u)_{B_{4\delta R}}| \, dx &\leq 4\delta^\beta \fint_{B_{4R}} |u - (u)_{B_{4R}}| \, dx + c \delta^{-n} R \left[ \frac{|\mu|(B_{4R})}{(4R)^{n-1}} \right]^{\frac{1}{p-1}} \\
	&\quad + c\delta^{-n} R\left[ \frac{D\Psi(B_{4R})}{(4R)^{n-1}} \right]^{\frac{1}{p-1}} + c \delta^\beta R s
\end{align*}
and
\begin{align*}
	R \fint_{B_R} |Du| \, dx\leq c \fint_{B_{4R}} |u - (u)_{B_{4R}}| \, dx + c R \left[ \frac{|\mu|(B_{4R})}{(4R)^{n-1}} \right]^{\frac{1}{p-1}} + c R \left[ \frac{D\Psi(B_{4R})}{(4R)^{n-1}} \right]^{\frac{1}{p-1}},
\end{align*}
where $\beta \in (0,1)$ is as in Lemma \ref{w3} and $c=c(n,p,l,L)$.
\end{lemma}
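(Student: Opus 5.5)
The plan is to compare $u$ with $w_3$ and transfer the oscillation decay of Lemma \ref{w3} from $w_3$ to $u$. All error terms will be controlled by the comparison estimates \eqref{duw1}, \eqref{w1w2}, \eqref{w2w3}, applied with $q=1$; this is admissible since $1<p_m$ for $p\ge 2$. We need these estimates at the level of functions, not gradients. Since $u-w_1\in W^{1,1}_0(B_{4R})$ (after approximation via the $u_i$), $w_1-w_2\in W^{1,p}_0(B_{4R})$ and $w_2-w_3\in W^{1,p}_0(B_{4R})$, the function $u-w_3$ vanishes on $\partial B_{4R}$. Poincaré's inequality in $W^{1,1}_0$ then gives
\begin{equation*}
\fint_{B_{4R}}|u-w_3|\,dx\le cR\fint_{B_{4R}}|Du-Dw_3|\,dx\le cR\left[\frac{|\mu|(B_{4R})}{(4R)^{n-1}}\right]^{\frac1{p-1}}+cR\left[\frac{D\Psi(B_{4R})}{(4R)^{n-1}}\right]^{\frac1{p-1}}=:cR\,E.
\end{equation*}

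For the first inequality, we use \eqref{1.8} and the triangle inequality:
\begin{equation*}
\fint_{B_{4\delta R}}|u-(u)_{B_{4\delta R}}|\le 2\fint_{B_{4\delta R}}|w_3-(w_3)_{B_{4\delta R}}|+2\fint_{B_{4\delta R}}|u-w_3|.
\end{equation*}
The last term is at most $2\delta^{-n}\fint_{B_{4R}}|u-w_3|\le c\delta^{-n}RE$. We bound the first term by Lemma \ref{w3}, which gives $2\delta^\beta\fint_{B_{4R}}|w_3-(w_3)_{B_{4R}}|+c\delta^\beta Rs$. Applying \eqref{1.8} once more, we have $\fint_{B_{4R}}|w_3-(w_3)_{B_{4R}}|\le 2\fint_{B_{4R}}|u-(u)_{B_{4R}}|+2\fint_{B_{4R}}|u-w_3|$. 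This produces the coefficient $4\delta^\beta$ and another error term $cRE$, which is absorbed into $c\delta^{-n}RE$.

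For the second inequality, we write $\fint_{B_R}|Du|\le\fint_{B_R}|Dw_3|+4^n\fint_{B_{4R}}|Du-Dw_3|$, and the last term is bounded by $cE$. For $w_3$, a solution of a homogeneous equation with structure \eqref{0a} (here $u$ enters only as a fixed measurable coefficient), we apply the Caccioppoli inequality together with the standard $L^1$ improvement (reverse Hölder, or a sup bound plus Caccioppoli). This gives
\begin{equation*}
R\fint_{B_R}|Dw_3|\,dx\le c\fint_{B_{2R}}|w_3-(w_3)_{B_{2R}}|\,dx+cRs.
\end{equation*}
We then compare back to $u$ as above, obtaining $\fint_{B_{2R}}|w_3-(w_3)_{B_{2R}}|\le c\fint_{B_{4R}}|u-(u)_{B_{4R}}|+cRE$.

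The main obstacle is the lower-order term $cRs$ coming from the Caccioppoli step, which does not appear in the stated bound. I would first try to handle it through the specific form of Lemma \ref{w3} (Theorem 18 of \cite{km155}), hoping the $s$-dependence cancels. If that fails, the honest statement carries an extra $cRs$, which is harmless in the later iteration since Theorem \ref{thm} already contains $\fint(|Du|+s)$. A secondary technical point is justifying Poincaré for $u-w_1$, which is only in $W^{1,1}$. For this I would work with the approximating $u_i$ and pass to the limit using the convergences in Definition \ref{opdy}.
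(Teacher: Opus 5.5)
Your proof of the first inequality is the paper's argument. You pass from $u$ to $w_3$ with \eqref{1.8} and apply Lemma \ref{w3}. You then return to $u$ with \eqref{1.8} again, which is where $4\delta^\beta$ comes from. Finally you bound $\fint_{B_{4R}}|u-w_3|$ by Poincar\'e and \eqref{duw1}, \eqref{w1w2}, \eqref{w2w3} with $q=1$.

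For the second inequality you also follow the paper: split $Du=(Du-Dw_3)+Dw_3$, use an interior estimate for $w_3$, and compare back to $u$. The only difference is the term $cRs$. The paper asserts $R\fint_{B_R}|Dw_3|\,dx\le c\fint_{B_{4R}}|w_3-(w_3)_{B_{4R}}|\,dx$ with no $s$ term, attributing it to Lemma 3.5 of \cite{kim}. That step is not justified when $s>0$. Your bookkeeping is the correct one, and the defect lies in the statement, not in your argument.

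Drop the hope that the $s$-dependence cancels through Lemma \ref{w3}. The term cannot be removed under \eqref{0a}, because \eqref{0a} allows $a(x,z,0)\neq 0$.

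Here is a counterexample. Take $p=2$, no $z$-dependence, and $a(x,\eta)=\eta+s\,\theta(|\eta|/s)\,\epsilon\sin(kx_1)e_1$. Here $\theta$ is smooth, equal to $1$ on $[0,1]$ and $0$ on $[2,\infty)$, and $\epsilon>0$ is small and fixed. Then:
\begin{itemize}
\item \eqref{0a} holds with $l=1/2$, $L=2$;
\item \eqref{omegaa} holds with the Dini modulus $\omega(\rho)=\epsilon\min\{1,k\rho\}$;
\item \eqref{omegaz} holds with $\Gamma=0$.
\end{itemize}
The function $u=(s\epsilon/k)\cos(kx_1)$ satisfies $a(x,Du)\equiv0$. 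Hence it solves the obstacle problem with $\mu=0$ and obstacle $\psi=2sx_1-K$ for $K$ large. Since $|D\psi|=2s$, we get $a(x,D\psi)=2se_1$, so $D\Psi\equiv0$.

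The second inequality would then read $R\fint_{B_R}|Du|\le c\fint_{B_{4R}}|u-(u)_{B_{4R}}|$. As $k\to\infty$ this becomes $\tfrac{2}{\pi}Rs\epsilon\lesssim c\,s\epsilon/k$, which fails once $kR$ is large.

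So the provable statement is yours, with $+cRs$ on the right-hand side. As you anticipated, this suffices for the only place the inequality is used, namely \eqref{rdux} in Lemma \ref{rrb}. There the extra term satisfies $c\,rs\le c(r/R)^\beta Rs\le c(r/R)^\beta R\fint_{B_R}(|Du|+s)\,dx$. This is the same kind of term as the $rs$ already on the left-hand side of that lemma.
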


\begin{proof}
According to \eqref{1.8} and Lemma \ref{w3}, we conclude that
\begin{align*}
	\fint_{B_{4\delta R}} |u - (u)_{B_{4\delta R}}| \, dx &\leq 2\fint_{B_{4\delta R}} |u - (w_3)_{B_{4\delta R}}| \, dx \\
	&\leq 2\fint_{B_{4\delta R}} |u - w_3| \, dx + 2\fint_{B_{4\delta R}} |w_3 - (w_3)_{B_{4\delta R}}| \, dx \\
	&\leq 2\delta^\beta \fint_{B_{4R}} |w_3 - (w_3)_{B_{4R}}| \, dx + c \delta^{-n} \fint_{B_{4R}} |u - w_3| \, dx + c \delta^\beta R s \\
	&\leq 4\delta^\beta \fint_{B_{4R}} |u - (u)_{B_{4R}}| \, dx + c \delta^{-n} \fint_{B_{4R}} |u - w_3| \, dx + c \delta^\beta R s.
\end{align*}
Next, we make use of Poincar\'{e}'s inequality and  \eqref{duw1}, \eqref{w1w2}, \eqref{w2w3} to get
\begin{align*}
	\fint_{B_{4R}} |u - w_3| \, dx &\leq c \fint_{B_{4R}} |u - w_1| + |w_1 - w_2| + |w_2 - w_3| \, dx \\
	&\leq cR \fint_{B_{4R}} |Du - Dw_1| + |Dw_1 - Dw_2| + |Dw_2-Dw_3| \, dx \\
	&\leq cR\left[ \frac{|\mu|(B_{4R})}{(4R)^{n-1}} \right]^{\frac{1}{p-1}}+R\left[ \frac{D\Psi(B_{4R})}{(4R)^{n-1}} \right]^{\frac{1}{p-1}}.	
\end{align*}
The combination of the above two inequalities yields
\begin{align*}
	\fint_{B_{4\delta R}} |u - (u)_{B_{4\delta R}}| \, dx &\leq 4\delta^\beta \fint_{B_{4R}} |u - (u)_{B_{4R}}| \, dx + c  \delta^{-n} R \left[ \frac{|\mu|(B_{4R})}{(4R)^{n-1}} \right]^{\frac{1}{p-1}} \\
	&\quad + c  \delta^{-n} R  \left[ \frac{D\Psi(B_{4R})}{(4R)^{n-1}} \right]^{\frac{1}{p-1}} + c \delta^\beta R s.
\end{align*}
Following the same arguments as in Lemma 3.5 of \cite{kim}, we obtain
\begin{align*}
	R \fint_{B_R} |Du| \, dx &\leq R \fint_{B_{R}} |Du - Dw_3|dx+R\fint_{B_{R}} |Dw_3|dx \\
	&\leq c R \fint_{B_{4R}} |Du - Dw_3|dx+c\fint_{B_{4R}} |w_3-(w_3)_{B_{4R}}|dx \\
	&\leq c R \fint_{B_{4R}} |Du - Dw_3| \, dx + c \fint_{B_{4R}} |u - w_3| \, dx + c \fint_{B_{4R}} |u - (u)_{B_{4R}}| \, dx \\
	&\leq c \fint_{B_{4R}} |u - (u)_{B_{4R}}| \, dx + c R \left[ \frac{|\mu|(B_{4R})}{(4R)^{n-1}} \right]^{\frac{1}{p-1}} + c R \left[ \frac{D\Psi(B_{4R})}{(4R)^{n-1}} \right]^{\frac{1}{p-1}}.
\end{align*}
This completes the proof.
\end{proof}
In what follows, we aim at deriving an excess decay estimate  in terms of Wolff potentials.
\begin{lemma}\label{du-6}
	Suppose that 
	$$(|Du|+s)_{B_{4R}}\ls 3, \ \  \frac{|\mu|(B_{4R})}{(4R)^{n-1}}\ls 1, \ \  \frac{D\Psi(B_{4R})}{(4R)^{n-1}}\ls 1, \ \  \omega(4R)\ls 1, \ \ \Gamma(4R)^{\sigma}\ls 1.$$
Then we have
\begin{align*}
	\fint_{B_{2\delta R}}|Du-(Du)_{B_{2\delta R}}|dx 
	&\ls \frac{1}{2^7} \fint_{B_{4 R}}|Du-(Du)_{B_{4 R}}|dx\\
	&\quad+c_6 \left\lbrace  \left[ \frac{|\mu|(B_{4R})}{(4R)^{n-1}} \right]^{\frac{1}{p-1}} +    \left[ \frac{D\Psi(B_{4R})}{(4R)^{n-1}} \right]^{\frac{1}{p-1}}+\omega(4R)^{\frac{2}{p}}+\Gamma(4R)^{\sigma} \right\rbrace.
\end{align*}	
\end{lemma}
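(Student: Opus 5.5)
The plan is the standard comparison argument: pass from $Du$ to the frozen solution $w_5$, use the decay of Lemma~\ref{w5} for $w_5$, and then pass back to $Du$. The error terms are controlled by \eqref{duw1}--\eqref{w4w5} with $q=1$, which is admissible since $1<p_m$.

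\textbf{Step 1 (decomposition).} By \eqref{1.8}, on $B_{2\delta R}$ we have
\[
\fint_{B_{2\delta R}}|Du-(Du)_{B_{2\delta R}}|\,dx\le 2\fint_{B_{2\delta R}}|Du-(Dw_5)_{B_{2\delta R}}|\,dx .
\]
Writing $Du-Dw_5=(Du-Dw_1)+(Dw_1-Dw_2)+(Dw_2-Dw_3)+(Dw_3-Dw_4)+(Dw_4-Dw_5)$, this is bounded by
\[
2\fint_{B_{2\delta R}}|Dw_5-(Dw_5)_{B_{2\delta R}}|\,dx + c\,\delta^{-n}\sum_{k}\fint_{B_{R}}|Dw_k-Dw_{k+1}|\,dx ,
\]
where the differences on $B_{4R}$ and $B_{2R}$ are enlarged at the cost of a constant. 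The first three differences are controlled by \eqref{duw1}, \eqref{w1w2}, \eqref{w2w3}, giving $[|\mu|(B_{4R})/(4R)^{n-1}]^{1/(p-1)}$ and $[D\Psi(B_{4R})/(4R)^{n-1}]^{1/(p-1)}$.

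\textbf{Step 2 (the $\Gamma$ and $\omega$ terms).} By Hölder and \eqref{w3w4},
\[
\fint_{B_{2R}}|Dw_3-Dw_4|\,dx\le c\Gamma R^\sigma\Big(\fint_{B_{4R}}(|Du|+|Dw_3|+s)\,dx\Big)^{1+\sigma}.
\]
To close this, I need a uniform bound on $\fint_{B_{4R}}|Dw_3|\,dx$. Inserting $Du$ and using the comparison estimates with the hypotheses $(|Du|+s)_{B_{4R}}\le3$ and the two ratios $\le1$ gives a bound by an absolute constant, so this term is $\le c\,\Gamma(4R)^\sigma$. Similarly, \eqref{w4w5} gives $\fint_{B_R}|Dw_4-Dw_5|\,dx\le c\,\omega(R)^{2/p}\fint_{B_{2R}}(|Dw_4|+s)\,dx$, and $\fint_{B_{2R}}|Dw_4|\,dx$ is bounded the same way through $Dw_3$ plus Step 2. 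Since $\omega$ is nondecreasing, $\omega(R)\le\omega(4R)$, and the exponent $2/p\le1$ is harmless because $\omega\le1$. Hence these two pieces contribute $c\{\omega(4R)^{2/p}+\Gamma(4R)^\sigma\}$.

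\textbf{Step 3 (decay for $w_5$ and returning to $Du$).} This is where I expect the main delicacy, namely matching the balls in Lemma~\ref{w5}. That lemma is stated on $B_{\delta R/2}$ with the mean taken over $B_{2\delta R}$, while Step 1 needs the oscillation on $B_{2\delta R}$. I would either apply Lemma~\ref{w5} after rescaling (with $R$ replaced by a comparable radius, absorbing the resulting factor $4^n$ into the very small constant $2^{-2n-10}$), or use the $C^{1,\alpha}$-type decay for $w_5$ directly with the same constant. Either way,
\[
2\fint_{B_{2\delta R}}|Dw_5-(Dw_5)_{B_{2\delta R}}|\,dx\le 2^{-8}\fint_{B_R}|Dw_5-(Dw_5)_{B_R}|\,dx .
\]
By \eqref{1.8} the right-hand side is at most $2^{-7}\fint_{B_R}|Dw_5-(Du)_{B_{4R}}|\,dx$. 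This in turn is bounded by
\[
2^{-7}\,4^n\fint_{B_{4R}}|Du-(Du)_{B_{4R}}|\,dx+c\sum_k\fint_{B_R}|Dw_k-Dw_{k+1}|\,dx .
\]
The factor $4^n$ must be absorbed through the smallness constant of Lemma~\ref{w5}, which is why that constant is $2^{-2n-10}$. The remaining sum is bounded by Steps 1--2.

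Collecting the three steps gives the stated inequality with $c_6=c_6(n,p,l,L)$, because $\delta$ depends only on $n,p,l,L$.
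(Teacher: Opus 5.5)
Your route is the paper's: compare $Du$ with $w_5$ through \eqref{duw1}--\eqref{w4w5} with $q=1$, apply Lemma~\ref{w5} to $w_5$, and return to $Du$. Your Step~2, which bounds $\fint_{B_{4R}}|Dw_3|\,dx$ and $\fint_{B_{2R}}|Dw_4|\,dx$ explicitly from the hypotheses, is more careful than the paper. The ball mismatch you flag in Lemma~\ref{w5} is also real. The paper simply uses that lemma as if it controlled the oscillation of $Dw_5$ on $B_{2\delta R}$ itself, with constant $2^{-2n-10}$.

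The gap is the constant bookkeeping in Step~3, and the coefficient $2^{-7}$ in the statement is exactly what is at stake. The constant $2^{-2n-10}=4^{-n}\cdot 2^{-10}$ of Lemma~\ref{w5} carries one spare factor $4^{-n}$, so it can pay for exactly one factor $4^n$.

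You spend that factor on the rescaling; this is why your display reads $2^{-8}$ rather than $2^{-2n-9}$. The step $\fint_{B_R}|Du-(Du)_{B_{4R}}|\,dx\le 4^n\fint_{B_{4R}}|Du-(Du)_{B_{4R}}|\,dx$ then costs a second $4^n$. Your own chain therefore ends with the coefficient $2^{-7}\cdot 4^n=2^{2n-7}$, not $2^{-7}$. The claim that this $4^n$ ``is absorbed through the smallness constant'' double-counts a margin already used.

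The fix is to lose no factor $4^n$ in the radii when you apply the decay for $w_5$. There are two ways to do this.

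\emph{Read Lemma~\ref{w5} as the paper does.} Keep its full constant, so that $2\fint_{B_{2\delta R}}|Dw_5-(Dw_5)_{B_{2\delta R}}|\,dx\le 2^{-2n-9}\fint_{B_R}|Dw_5-(Dw_5)_{B_R}|\,dx$. Your remaining steps then give $2^{-2n-9}\cdot 2\cdot 4^n=2^{-8}\le 2^{-7}$.

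\emph{Respect the literal ball configuration.} Apply Lemma~\ref{w5} with $4\delta$ in place of $\delta$, so the inner ball is $B_{2\delta R}$ and the mean is over $B_{8\delta R}$. Then use \eqref{1.8} to switch to the mean over $B_{2\delta R}$, which costs a factor $2$, not $4^n$. The total is exactly $2\cdot 2\cdot 2^{-2n-10}\cdot 2\cdot 4^n=2^{-7}$. This requires $4\delta\le\delta_1$, i.e.\ shrinking $\delta$ in \eqref{delta}. That is harmless for Lemma~\ref{w4}, which only needs $\delta\le\delta_2$.
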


\begin{proof}
By virtue of the estimates \eqref{duw1}, \eqref{w1w2}, \eqref{w2w3}, \eqref{w3w4}  and \eqref{w4w5}, we arrive at
	\begin{align*}
		\fint_{B_R}|Du-Dw_5|dx &\ls \fint_{B_R}|Du-Dw_1|dx+\fint_{B_R}|Dw_1-Dw_2|dx+\fint_{B_R}|Dw_2-Dw_3|dx\\ &\quad+\fint_{B_R}|Dw_3-Dw_4|dx+\fint_{B_R}|Dw_4-Dw_5|dx 	\\ 
		&\ls c  \left[ \frac{|\mu|(B_{4R})}{(4R)^{n-1}} \right]^{\frac{1}{p-1}} + c   \left[ \frac{D\Psi(B_{4R})}{(4R)^{n-1}} \right]^{\frac{1}{p-1}}+c\omega(4R)^{\frac{2}{p}}+c\Gamma(4R)^{\sigma}.
	\end{align*}
Combining Lemma \ref{w5} with the above inequality, we obtain
\begin{align*}
	\fint_{B_{2\delta R}}|Du-(Du)_{B_{2\delta R}}|dx 	&\ls 2	\fint_{B_{2\delta R}}|Dw_5-(Dw_5)_{B_{2\delta R}}|dx+2\fint_{B_{2\delta R}}|Du-Dw_5|dx \\
		&\ls \frac{1}{2^{2n+9}} \fint_{B_{ R}}|Dw_5-(Dw_5)_{B_{ R}}|dx+c\fint_{B_{ R}}|Du-Dw_5|dx \\
	&\ls \frac{1}{2^{2n+8}} 	\fint_{B_{   R}}|Du-(Du)_{B_{ R}}|dx+c\fint_{B_{R}}|Du-Dw_5|dx \\
	&\ls \frac{1}{2^7} \fint_{B_{4 R}}|Du-(Du)_{B_{4 R}}|dx\\
	&\quad+c_6 \left\lbrace  \left[ \frac{|\mu|(B_{4R})}{(4R)^{n-1}} \right]^{\frac{1}{p-1}} +    \left[ \frac{D\Psi(B_{4R})}{(4R)^{n-1}} \right]^{\frac{1}{p-1}}+\omega(4R)^{\frac{2}{p}}+\Gamma(4R)^{\sigma} \right\rbrace, 
\end{align*}	
which completes the proof.	
\end{proof}
  Given the universal constants $c_1, c_2,$  $c_3$ and $c_5$ appearing in \eqref{duw1}, \eqref{w1w2}, \eqref{w3w4}, and Lemma \ref{w4}, we suppose that
\begin{equation}\label{assume1}
	(|Du|+s)_{B_{4R}} \leq 4, \quad \frac{1}{4} \leq (|Du|+s)_{B_{2\delta R}} \leq 4,  
\end{equation}
\begin{equation}\label{assume2}
	c_1 \left[ \frac{|\mu|(B_{4R})}{(4R)^{n-1}} \right]^{\frac{1}{p-1}} + c_2 \left[ \frac{D\Psi(B_{4R})}{(4R)^{n-1}} \right]^{\frac{1}{p-1}} + c_3 \Gamma(4R)^{\sigma} + \omega(4R)
	+ c_5 \int_0^{4R} \frac{\omega(\rho)}{\rho} d\rho \leq \frac{\delta^{2n}}{2^{2n+16}}.
\end{equation}

\begin{lemma}\label{oscw4}
	Under the assumption  \eqref{assume1} and \eqref{assume2}, we obtain	
	\begin{equation*}
	\frac{1}{2^{10}} \leq |Dw_4| + s \leq 2^{10} \quad \text{in } B_{2\delta R}.
	\end{equation*}
\end{lemma}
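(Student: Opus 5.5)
The plan is to compare $Dw_4$ with $Du$ on $B_{2\delta R}$ via the chain $u\to w_1\to w_2\to w_3\to w_4$, and then to use the oscillation bound of Lemma \ref{w4} to upgrade an averaged bound into a pointwise one. Lemma \ref{w4} has two hypotheses. The smallness $\int_0^{2R}\omega(\rho)\rho^{-1}d\rho\le 1/c_5$ follows at once from \eqref{assume2}, since the integrand is nonnegative and $2R<4R$. The other hypothesis is $(|Dw_4|+s)_{B_{2R}}\le 2^{n+10}$. For this we first estimate
\begin{equation*}
\fint_{B_{2R}}|Du-Dw_4|\,dx\le 2^n\fint_{B_{4R}}\big(|Du-Dw_1|+|Dw_1-Dw_2|+|Dw_2-Dw_3|\big)dx+\fint_{B_{2R}}|Dw_3-Dw_4|\,dx .
\end{equation*}
The first three terms are controlled with $q=1$ in \eqref{duw1}, \eqref{w1w2}, \eqref{w2w3}. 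The last term is controlled by Hölder and \eqref{w3w4}, giving $c_3\Gamma R^\sigma\big(\fint_{B_{4R}}(|Du|+|Dw_3|+s)\big)^{1+\sigma}$.

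The quantity $\fint_{B_{4R}}|Dw_3|$ is bounded by $\fint_{B_{4R}}|Du|$ plus the same comparison errors, hence by $4+1\le 5$ using \eqref{assume1} and \eqref{assume2}. Therefore $\fint_{B_{2R}}|Du-Dw_4|\le \delta^{2n}2^{-2n-10}$, which is tiny. With $(|Du|+s)_{B_{2R}}\le 2^n\cdot 4$ this yields $(|Dw_4|+s)_{B_{2R}}\le 2^{n+10}$. Lemma \ref{w4} then applies and gives $\operatorname{osc}_{B_{2\delta R}}Dw_4\le 2^{-10}$; note that $\delta\le\delta_2$ and that the oscillation over a smaller ball is smaller.

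Next we pass from averages to pointwise values on $B_{2\delta R}$. Restricting the comparison to the smaller ball costs a factor $\delta^{-n}$:
\begin{equation*}
\fint_{B_{2\delta R}}|Du-Dw_4|\,dx\le (2\delta)^{-n}\cdot 2^n\fint_{B_{2R}}|Du-Dw_4|\,dx\cdot 2^{n}\le 2^{-10}.
\end{equation*}
This is exactly why \eqref{assume2} carries the factor $\delta^{2n}$. Combining with \eqref{assume1} gives $\tfrac14-2^{-10}\le (|Dw_4|+s)_{B_{2\delta R}}\le 4+2^{-10}$. By continuity of $Dw_4$ (Lemma \ref{w4} gives it), the mean value of $|Dw_4|+s$ is attained or approximated at some point $y\in B_{2\delta R}$. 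For every $x\in B_{2\delta R}$ we then have $\big||Dw_4(x)|-|Dw_4(y)|\big|\le 2^{-10}$. This gives $\tfrac14-2^{-9}\le |Dw_4(x)|+s\le 4+2^{-9}$, which lies well within $[2^{-10},2^{10}]$.

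The main obstacle is bookkeeping the constants. The comparison error must stay below about $2^{-10}$ after the loss of $\delta^{-n}$ and the dimensional factors $2^n$. The nonlinear term from \eqref{w3w4} must also be kept small, which is where the a priori bound on $\fint|Dw_3|$ and the smallness of $c_3\Gamma(4R)^\sigma$ enter. I would verify carefully that the power $1+\sigma$ of a quantity at most about $5$ is absorbed by the margin $2^{-2n-16}$ in \eqref{assume2}; if not, I would shrink the right-hand side constant accordingly, which is harmless since it is universal.
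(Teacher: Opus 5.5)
Your proposal is correct and follows essentially the same route as the paper: bound $(|Dw_4|+s)_{B_{2R}}$ through the chain $u\to w_1\to w_2\to w_3\to w_4$ so that Lemma~\ref{w4} applies, then use the $\delta^{2n}$ margin in \eqref{assume2} to transfer the bounds of \eqref{assume1} to $(|Dw_4|+s)_{B_{2\delta R}}$, and conclude pointwise via the oscillation bound. One small remark: you do not need continuity of $Dw_4$ in the last step, because the oscillation bound directly gives $\big||Dw_4(x)|-(|Dw_4|)_{B_{2\delta R}}\big|\le 2^{-10}$ for every $x\in B_{2\delta R}$ by averaging.
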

\begin{proof}
Combining the estimates \eqref{duw1}, \eqref{w1w2}, \eqref{w2w3}, \eqref{w3w4}, \eqref{assume1},   \eqref{assume2} and H\"{o}lder's inequality, we arrive at
\begin{align*}
&	(|Dw_4|+s)_{B_{2R}}\ls (|Dw_4 - Dw_3|)_{B_{2R}}\\
	&\quad + 2^n \left[ (|Dw_3 - Dw_2|)_{B_{4R}} + (|Dw_2 - Dw_1|)_{B_{4R}} + (|Dw_1 - Du|)_{B_{4R}}+(|Du|+s)_{B_{4R}} \right] \\
	&\ls \frac{1}{2^{2n+16}} \left[ 2 (|Du|+s)_{B_{4R}} + (|Du - Dw_1|)_{B_{4R}} + (|Dw_1 - Dw_2|)_{B_{4R}}+  (|Dw_2 - Dw_3|)_{B_{4R}} \right] ^{1+\sigma} \\
& \quad+ 2^n \left[ \frac{1}{2^{16}} + \frac{1}{2^{16}} + \frac{1}{2^{16}} + 4 \right] \\
 	&\ls 2^{n+3}.
\end{align*}
Applying Lemma \ref{w4}, we obtain
\begin{equation}\label{210}
	\sup_{x,y \in B_{2\delta R}} |D w_4(x) - D w_4(y)| \leq \frac{1}{2^{10}}.  
\end{equation}
Then we   estimate
\begin{align*}
	\left| (|D u|)_{B_{2\delta R}} - (|D w_4|)_{B_{2\delta R}} \right| &\leq \left| (|D u|)_{B_{2\delta R}} - (|D w_1|)_{B_{2\delta R}} \right| + \left| (|D w_1|)_{B_{2\delta R}} - (|D w_2|)_{B_{2\delta R}} \right| \\
	&\quad + \left| (|D w_2|)_{B_{2\delta R}} - (|D w_3|)_{B_{2\delta R}} \right| + \left| (|D w_3|)_{B_{2\delta R}} - (|D w_4|)_{B_{2\delta R}} \right| \\
	&\leq (2\delta ^{-1})^n \frac{\delta^{2n}}{2^{2n+16}} + 2  (2\delta ^{-1})^n \frac{\delta^{2n}}{2^{2n+16}} + \delta^{-n} \frac{\delta^{2n}}{2^{2n+16}} \left[ 4 + \frac{3}{2^{2n+16}} \right]^{1+\sigma} \\
	&\leq \frac{1}{2^{10}}.
\end{align*}
Combining  \eqref{assume1} with the above inequality to get
\begin{equation*}
	\frac{1}{2^9} \leq (|D w_4| + s)_{B_{2\delta R}} \leq 2^9.
\end{equation*}
Summarizing the above estimates and applying \eqref{210}, we arrive at the final conclusion. This completes the proof.
\end{proof}

To obtain an excess decay estimate in terms of Riesz potentials, we further assume that
\begin{equation}\label{assume3}
	\frac{1}{4} \leqslant (|Du| + s)_{B_{\delta^2 R}} \leqslant 4,
\end{equation}
and that the functions $\bar{w}_1, \bar{w}_2, \bar{w}_3, \bar{w}_4, \bar{w}_5$ satisfy, respectively,
\begin{itemize}
	\item  {for $\bar{w}_1$:}
	\begin{equation*}
		\left\{\begin{array}{r@{\ \ }c@{\ \ }ll}
			&\int_{B_{2\delta R}}  a(x, u, D\bar{w}_1) \cdot D(v-\bar{w}_1)dx \geqslant 0,\ \ &\mbox{for every} \ v \in \bar{w}_1+W_{0}^{1,p}(B_{2\delta R}) \ \  \mbox{with} \ v\geqslant \psi \  a.e. \ \mbox{in}\ \ B_{2\delta R} \,, \\[0.05cm]
			&\bar{w}_1 =u \ \ \ \ \ \ \ \ \ \  \ \ &\mbox{on}\ \ \partial B_{2\delta R} \,;
		\end{array}\right.
	\end{equation*}
	
	\item  {for $\bar{w}_2$:}
	\begin{equation*}
		\left\{\begin{array}{r@{\ \ }c@{\ \ }ll}
			-\operatorname{div} a(x, u, D\bar{w}_2) &=& -\operatorname{div} a(x, u, D\psi)\ \ \ \ \ \  \ &\mbox{in}\ \ B_{2\delta R}\,, \\[0.05cm]
			\bar{w}_2&=&\bar{w}_1 \ \ \ \ \ \ \ \ \ \ \ \ \ \ \  \ \ \ \ \  \ \ \ \ \  &\mbox{on}\ \   \partial B_{2\delta R} \,;
		\end{array}\right.
	\end{equation*}
	
	\item  {for $\bar{w}_3$:}
	\begin{equation*}
		\left\{\begin{array}{r@{\ \ }c@{\ \ }ll}
			-\operatorname{div} a(x, u, D\bar{w}_3) &=& 0\ \ \ \ \ \ \ \ \ \  \ \ \   \ &\mbox{in}\ \ B_{2\delta R}\,, \\[0.05cm]
			\bar{w}_3&=&\bar{w}_2 \ \ \ \ \ \ \ \ \ \  \ \ &\mbox{on}\ \ \partial B_{2\delta R} \,;
		\end{array}\right.
	\end{equation*}
	
	\item  {for $\bar{w}_4$:}
	\begin{equation*}
		\left\{\begin{array}{r@{\ \ }c@{\ \ }ll}
			-\operatorname{div} a(x, (u)_{B_{\delta R}}, D\bar{w}_4)&=& 0\ \ \ \ \ \  \ \ \ \ \   \  \ &\mbox{in}\ \ B_{\delta R}\,, \\[0.05cm]
			\bar{w}_4&=&\bar{w}_3 \ \ \ \ \ \ \ \ \ \  \ \ &\mbox{on}\ \ \partial B_{\delta R} \,;
		\end{array}\right.
	\end{equation*}
	
	\item  {for $\bar{w}_5$:}
	\begin{equation*}
		\left\{\begin{array}{r@{\ \ }c@{\ \ }ll}
			-\operatorname{div} a(x_0, (u)_{B_{\delta R}}, D\bar{w}_5)&=& 0\ \ \ \ \ \  \ \ \ \ \ \ \ \   \ &\mbox{in}\ \ B_{\delta^2 R}\,, \\[0.05cm]
			\bar{w}_5&=&\bar{w}_4 \ \ \ \ \ \ \ \ \ \  \ \ &\mbox{on}\ \ \partial B_{\delta^2 R} \,.
		\end{array}\right.
	\end{equation*}
\end{itemize}

 Define
 \begin{equation*}
 	V(z) := (|z|^2 + s^2)^{\frac{p-2}{4}}   z, \ \ \ \forall z  \in \mathbb{R}^n.
 \end{equation*}
 We observe that
 \begin{equation}\label{vvv}
 	|V(z_1) - V(z_2)|^2 \approx (|z_1|^2 + |z_2|^2 + s^2)^{\frac{p-2}{2}}   |z_1 - z_2|^2, \quad \forall z_1, z_2 \in \mathbb{R}^n.
 \end{equation}
 
 According to the proof of Lemma 5.1 in \cite{bsy} and Lemma 1 in \cite{km155}, we have the following lemma. 
 \begin{lemma}\label{vuw}
 For $h > 0$ and $\xi > 1$, then there exists $c = c(n, p, \ell, L)$ such that
\begin{align*}  
	\left\{
	\begin{aligned}
		&\fint_{B_{2\delta R}} \frac{|V(Du) - V(D\bar{w}_1)|^2}{(h + |u - \bar{w}_1|)^{\xi}} \, dx \leqslant c \frac{h^{1-\xi}}{\xi-1} \frac{|\mu|(B_{2\delta R})}{R^n}, \\
		&\fint_{B_{2\delta R}} \frac{|V(D\bar{w}_1) - V(D\bar{w}_2)|^2}{(h + |\bar{w}_1 - \bar{w}_2|)^{\xi}} \, dx \leqslant c \frac{h^{1-\xi}}{\xi-1} \frac{D\Psi(B_{2\delta R})}{R^n}, \\
		&\fint_{B_{2\delta R}} \frac{|V(D\bar{w}_2) - V(D\bar{w}_3)|^2}{(h + |\bar{w}_2 - \bar{w}_3|)^{\xi}} \, dx \leqslant c \frac{h^{1-\xi}}{\xi-1} \frac{D\Psi(B_{2\delta R})}{R^n}.
	\end{aligned}
	\right.
\end{align*}
 \end{lemma}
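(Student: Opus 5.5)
We prove the three weighted estimates one at a time. Each uses the same mechanism: test the relevant variational inequality or equation with a bounded truncation of the difference of the two functions, chosen so that the resulting test function is admissible. Fix $h>0$ and $\xi>1$, and set
\[
\Phi_{h,\xi}(t):=\int_0^{t}\frac{d\tau}{(h+\tau)^{\xi}},\qquad \Phi_{h,\xi}(t)=-\Phi_{h,\xi}(-t)\ \ \text{for } t<0 .
\]
This odd, nondecreasing, Lipschitz function satisfies $|\Phi_{h,\xi}|\le \frac{h^{1-\xi}}{\xi-1}$ and $\Phi'_{h,\xi}(t)=(h+|t|)^{-\xi}$. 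Using the monotonicity inequality
\[
\bigl(a(x,u,z_1)-a(x,u,z_2)\bigr)\cdot(z_1-z_2)\ge c^{-1}|V(z_1)-V(z_2)|^2,
\]
which follows from \eqref{0a} and \eqref{vvv}, testing with $\Phi_{h,\xi}(\text{difference})$ puts exactly the weighted quantity $\int |V(\cdot)-V(\cdot)|^2 (h+|\cdot|)^{-\xi}$ on the left-hand side. The right-hand side is bounded by $\|\Phi_{h,\xi}\|_\infty$ times the total variation of the data on $B_{2\delta R}$. Dividing by $|B_{2\delta R}|\approx R^n$, where $\delta$ is universal, gives the claimed form.

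\emph{First estimate.} We argue on the approximating solutions $u_i$ of Definition~\ref{opdy}, with comparison obstacle solutions $\bar w_{1,i}$ having boundary data $u_i$ on $\partial B_{2\delta R}$. Since both $u_i$ and $\bar w_{1,i}$ lie above $\psi$, the functions
\[
v=u_i\mp \Phi_{h,\xi}(u_i-\bar w_{1,i}),\qquad v=\bar w_{1,i}\pm \Phi_{h,\xi}(u_i-\bar w_{1,i})
\]
are admissible, after a small scaling argument of the type in \cite{s26} using that $\Phi_{h,\xi}$ is $1/h^{\xi}$-Lipschitz. Subtracting the two variational inequalities gives
\[
\int_{B_{2\delta R}}\bigl(a(x,u,Du_i)-a(x,u,D\bar w_{1,i})\bigr)\cdot D\Phi_{h,\xi}(u_i-\bar w_{1,i})\,dx\le \int_{B_{2\delta R}} |f_i|\,|\Phi_{h,\xi}|\,dx .
\]
Here we freeze the $z$-argument at $u$, as in the definition of $\bar w_1$. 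One point needs care: the equation for $u_i$ involves $a(x,Du_i)$ as written. We follow \cite{bsy} and treat it with the same frozen coefficient $a(x,u,\cdot)$, which is the same convention as \eqref{duw1}. We then pass to the limit $i\to\infty$, using the a.e. convergences and Fatou's lemma on the left and the condition $\limsup\int_{B}|f_i|\le|\mu|(B)$ on the right.

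\emph{Second and third estimates.} For $\bar w_2$ we use the equation $-\operatorname{div}a(x,u,D\bar w_2)=-\operatorname{div}a(x,u,D\psi)$. For $\bar w_1$ we use the fact, as in \cite{s26} Lemma 3.7, that the obstacle solution satisfies
\[
0\le -\operatorname{div}a(x,u,D\bar w_1)\le (-\operatorname{div}a(x,u,D\psi))^+
\]
in the sense of measures, i.e. the Lewy–Stampacchia inequality. Testing the difference of these two relations with $\Phi_{h,\xi}(\bar w_1-\bar w_2)\in W^{1,p}_0$ leaves a right-hand side bounded by
\[
\frac{h^{1-\xi}}{\xi-1}\int_{B_{2\delta R}}|\operatorname{div}a(x,u,D\psi)|\,dx=\frac{h^{1-\xi}}{\xi-1}\,D\Psi(B_{2\delta R}).
\]
The third estimate is identical: test with $\Phi_{h,\xi}(\bar w_2-\bar w_3)$, whose data difference is exactly $\operatorname{div}a(x,u,D\psi)\in L^1$.

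The main obstacle is the first estimate. Its test functions must respect the obstacle constraint, which is where the Lewy–Stampacchia-type structure and the truncation sign symmetry come in. It also requires justifying the passage from the approximations $f_i$ to $\mu$ with a bound localized to $B_{2\delta R}$. For this step we would invoke the argument of \cite[Lemma~5.1]{bsy} essentially verbatim.
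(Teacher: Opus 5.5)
Your proposal is correct and follows essentially the route the paper relies on. The paper gives no independent proof and simply defers to Lemma~5.1 of \cite{bsy} and Lemma~1 of \cite{km155}, whose argument is the one you describe: test with the bounded odd truncation $\Phi_{h,\xi}$ of the difference, use the Lewy--Stampacchia inequality for $\bar w_1$ in the obstacle-versus-equation comparisons, and pass to the limit in the approximations using Fatou's lemma and $\limsup_i\int_{B_{2\delta R}}|f_i|\,dx\le|\mu|(B_{2\delta R})$. One small correction: only the pair $v=u_i-t\Phi_{h,\xi}(u_i-\bar w_{1,i})$, $v=\bar w_{1,i}+t\Phi_{h,\xi}(u_i-\bar w_{1,i})$ with $0<t\le h^{\xi}$ is admissible in general, not both sign choices, but since $\Phi_{h,\xi}$ is odd this pair alone already gives your displayed inequality.
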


\begin{lemma}\label{uw1-}
Suppose that the assumptions \eqref{assume1} and \eqref{assume2} hold. Then we obtain
\begin{equation*}
 \fint_{B_{2\delta R}} |D u - D \bar{w}_1| \, dx \leq c \left[ \frac{|\mu|(B_{4R})}{R^{n-1}} + \frac{D\Psi(B_{4R})}{R^{n-1}} + \Gamma R^{\alpha} \right].   
\end{equation*}
\end{lemma}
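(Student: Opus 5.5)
We follow the strategy of Lemma 5.1 in \cite{bsy}: the weighted bound of Lemma \ref{vuw} is converted into an unweighted $L^1$ bound on $Du-D\bar w_1$. What makes this possible is the nondegeneracy coming from \eqref{assume1}--\eqref{assume2}, namely $|Du|+s$ is of order one on $B_{2\delta R}$. The right-hand side is linear in $|\mu|(B_{4R})/R^{n-1}$ rather than carrying the power $\frac{1}{p-1}$; we expect this to come from the nondegenerate regime, in which the $p$-growth structure behaves quadratically. The term $\Gamma R^{\alpha}$ absorbs the errors produced by the $u$-dependence; we read $\alpha$ as $\sigma$ (possibly up to a multiple).

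\emph{Step 1 (localize).} Since $\bar w_1$ solves the obstacle problem on $B_{2\delta R}$ with coefficients $a(x,u,\cdot)$, the estimate \eqref{duw1}, rescaled to the ball $B_{2\delta R}$, together with Poincar\'e's inequality for $u-\bar w_1\in W^{1,1}_0(B_{2\delta R})$, gives
\[
\fint_{B_{2\delta R}}|u-\bar w_1|\,dx\le cR\Big[\frac{|\mu|(B_{4R})}{R^{n-1}}\Big]^{\frac1{p-1}}.
\]
This is a small quantity by \eqref{assume2}.

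\emph{Step 2 (weighted split).} Write $E=|Du-D\bar w_1|$ and $W=(|Du|^2+|D\bar w_1|^2+s^2)^{\frac{p-2}{2}}$. By \eqref{vvv}, since $p\ge2$ and $W\ge s^{p-2}$, we have $E^2\lesssim W^{-1}|V(Du)-V(D\bar w_1)|^2$. Cauchy--Schwarz then gives
\[
\fint E\le\Big(\fint\frac{|V(Du)-V(D\bar w_1)|^2}{(h+|u-\bar w_1|)^{\xi}}\Big)^{\frac12}\Big(\fint W^{-1}(h+|u-\bar w_1|)^{\xi}\Big)^{\frac12}.
\]
We bound $W^{-1}$ on the large part of the ball using the lower bound $\frac14\le(|Du|+s)$ from \eqref{assume1}. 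Where this fails, i.e.\ where $|Du|+s$ is small, we split $E\le |Du|+|D\bar w_1|$ and control this set by a measure argument combined with Lemma \ref{oscw4}, which gives the pointwise bounds $\frac1{2^{10}}\le |Dw_4|+s\le 2^{10}$ on $B_{2\delta R}$. The chain \eqref{duw1}--\eqref{w3w4} then shows $Du$ is $L^1$-close to the nondegenerate field $Dw_4$.

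\emph{Step 3 (optimize).} The first factor is bounded by Lemma \ref{vuw}. For the second factor we take $\xi=1+\frac1n$ (any $\xi$ slightly above $1$ works) and use Step 1 together with the higher integrability of $u-\bar w_1$ given by the truncation/Sobolev estimates in \cite{s26}. Choosing $h$ comparable to $R\,|\mu|(B_{2\delta R})/R^{n-1}$ balances the factors $h^{1-\xi}$ and $h^{\xi}$, and yields a bound $c\,|\mu|(B_{4R})/R^{n-1}$.

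\emph{Step 4 (the $u$-dependence).} Here we expect the main obstacle. The comparison of Lemma \ref{vuw} only handles $u$ through the fixed coefficient $a(x,u,\cdot)$. But the passage from the approximating problems in Definition \ref{opdy}, where the coefficients are frozen along $u_i$, to the limit, and the comparison with $\bar w_2,\bar w_3$ where needed, creates an error governed by \eqref{omegaz}. This error has size
\[
\Gamma^{p-1}\fint|u-(u)_{B_{2\delta R}}|^{(p-1)\sigma}(|Du|+s)^{p-1}.
\]
Using \eqref{assume1}, Lemma \ref{uuosc} and the Campanato bound on $\operatorname{osc}u\lesssim R$, this is $\le c\,\Gamma R^{\sigma}$. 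Passing to the limit $i\to\infty$ via the a.e.\ and $L^r$ convergences in Definition \ref{opdy}(iii) is routine. The delicate point is making the measure-of-degenerate-set argument in Step 2 quantitative, and we would first try Chebyshev with the $L^1$ smallness of $Du-Dw_4$.
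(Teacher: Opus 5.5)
\textbf{Main gap: Step 3 cannot produce a bound linear in $m:=|\mu|(B_{4R})/R^{n-1}$ when $p>2$.} Away from the degenerate set, your second Cauchy--Schwarz factor is $\big(h^{\xi}+A^{\xi}\big)^{1/2}$ with $A:=\big(\fint_{B_{2\delta R}}|u-\bar w_1|^{\xi}dx\big)^{1/\xi}$. The only control you bring on $A$ is Step 1 together with the estimates of \cite{s26}, namely $A\le cR\,m^{\frac{1}{p-1}}$.

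Combined with Lemma~\ref{vuw}, the product $\big(h^{1-\xi}\tfrac{m}{R}\big)^{1/2}\big(h^{\xi}+A^{\xi}\big)^{1/2}$ is smallest for $h\approx A$. Its value there is of order $(Am/R)^{1/2}\lesssim m^{\frac{p}{2(p-1)}}$, and your choice $h\approx Rm$ gives something even worse. This is linear in $m$ only for $p=2$; for $p>2$ and $m$ small it is strictly weaker than the claim. Your ``balancing'' tacitly assumes $A\lesssim Rm$, which is essentially the conclusion of the lemma.

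The missing idea is absorption. The paper takes the parameter to be the weighted quantity
$\alpha=\big(\fint_{B_{2\delta R}}(|D\bar w_1|+s)^{(p-2)\beta}|u-\bar w_1|^{\beta}dx\big)^{1/\beta}$ with $1<\beta<\frac{n}{n-1}$. This gives $Q_2\le c(\frac{\alpha}{R}m)^{1/2}\le\varepsilon\frac{\alpha}{R}+c(\varepsilon)m$. It then splits $\alpha\le F_1+F_2$:
\begin{itemize}
\item $F_1/R$ is bounded linearly in $m$, $D\Psi(B_{4R})/R^{n-1}$ and $\Gamma R^{\sigma}$. Young's inequality produces integrands of power $\beta(p-1)\in(p-1,p_m)$, and the comparison chain applied to these yields exactly linear bounds.
\item $F_2/R\le c\fint_{B_{2\delta R}}|Du-D\bar w_1|dx$, using $|Dw_4|+s\le 2^{10}$ and the Sobolev inequality for $u-\bar w_1\in W^{1,1}_0$. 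This term is then reabsorbed into the left-hand side.
\end{itemize}
Linearity comes from this self-improvement, not from an a priori bound on $u-\bar w_1$. Your $\xi=1+\frac1n\le\frac{n}{n-1}$ would allow the same fix: take $h=A$ and use $A\le cR\fint|Du-D\bar w_1|$.

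\textbf{Step 2 also needs repair, and Step 4 is unnecessary.}

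The paper never needs a lower bound on $|Du|$. It inserts $1\le c(|Dw_4|+s)^{\eta}$ pointwise via Lemma~\ref{oscw4}, with $\eta=(p-2)(1+\gamma)$, and splits $(|Dw_4|+s)^{\eta}\lesssim|Dw_4-D\bar w_1|^{\eta}+(|D\bar w_1|+s)^{\eta}$. The first piece, $Q_1$, is handled by Young's inequality with exponent $\eta+1\in(p-1,p_m)$, again giving linear bounds. The second piece pairs with $(|D\bar w_1|^2+s^2)^{\frac{p-2}{2}}$, which is dominated by the weight in Lemma~\ref{vuw}, so no degenerate set ever appears.

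Your degenerate-set route can be rescued, but not with $L^1$ Chebyshev. That only gives $|G|/|B_{2\delta R}|$ of order $m^{\frac{1}{p-1}}$, which is again nonlinear. Moreover, on $G$ the bound $E\le|Du|+|D\bar w_1|$ is circular unless you write $|D\bar w_1|\le|D\bar w_1-Dw_4|+2^{10}$ and use Chebyshev and H\"older at a power $q\in[p-1,p_m)$.

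As for Step 4: $\bar w_1$ is defined with the coefficients $a(x,u,\cdot)$ for the fixed function $u$, and Lemma~\ref{vuw} is stated for exactly that problem. So no error from the approximating problems arises. In the paper, $\Gamma R^{\sigma}$ enters only through \eqref{w3w4}, when the chain passes from $Dw_3$ to the nondegenerate field $Dw_4$. In any case, a pointwise bound $\operatorname{osc}u\lesssim R$ is not available; Lemma~\ref{uuosc} controls only the mean oscillation.
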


\begin{proof}
 Let $$\gamma = \frac{1}{4(p-1)(n+1)}, \ \  \beta = 1 + 2\gamma, \ \  \eta = (p-2)(1 + \gamma).$$
Then, for $p_m$ given in \eqref{pm}, a direct calculation shows that
\begin{equation*}
	1 < \beta < \frac{n}{n-1}  \quad \& \quad p-1 < \eta + 1 < \beta(p-1) < p_m \leq p.
\end{equation*}
Applying the results from Lemma \ref{oscw4}, it follows that
\begin{align}\label{99}  \nonumber
	\fint_{B_{2\delta R}} |D u - D \bar{w}_1| \, dx &\leq c \fint_{B_{2\delta R}} (|D w_4| + s)^\eta |D u - D \bar{w}_1| \, dx \\ \nonumber
	&\leq c \fint_{B_{2\delta R}} |D w_4 - D \bar{w}_1|^\eta |D u - D \bar{w}_1| \, dx + c \fint_{B_{2\delta R}} (|D \bar{w}_1| + s)^\eta |D u - D \bar{w}_1| \, dx \\
	&:= Q_1 + Q_2.  
\end{align}

 Regarding the term $Q_1$, we employ Young's and H\"{o}lder's inequalities. Taking into account the estimates \eqref{duw1}, \eqref{w1w2}, \eqref{w2w3}, \eqref{assume1}, and \eqref{assume2}, we deduce that
\begin{align}\label{91} \nonumber
	Q_1 &\leq c \fint_{B_{2\delta R}} |D w_4 - D \bar{w}_1|^{\eta+1} + |D u - D \bar{w}_1|^{\eta+1} \, dx \\ \nonumber
	&\leq c \fint_{B_{2\delta R}} |D w_4 - D  {w}_3|^{\eta+1} + |D  {w}_3 - D  {w}_2|^{\eta+1} + |D  {w}_2 - D  {w}_1|^{\eta+1} + |D  {w}_1 - D u|^{\eta+1} \\ \nonumber
	&\quad + |D u - D \bar{w}_1|^{\eta+1}+ |D  \bar{w}_1 - D  \bar{w}_2|^{\eta+1} \, dx \\ 
	&\leq c   \Gamma R^\sigma + c   \frac{|\mu|(B_{4R})}{R^{n-1}} + c  \frac{D\Psi(B_{4R})}{R^{n-1}}.  
\end{align}

  Next, we turn to the estimation of $Q_2$.
\begin{align*}
	Q_2 &= \fint_{B_{2\delta R}} \left[ \frac{(|D \bar{w}_1|^2 + s^2)^{\frac{p-2}{2}} |D u - D \bar{w}_1|^2}{(\alpha + |u - \bar{w}_1|)^\beta} \right]^{\frac{1}{2}} \cdot \left[ (|D \bar{w}_1| + s)^{(p-2)\beta} (\alpha + |u - \bar{w}_1|)^\beta \right]^{\frac{1}{2}} \, dx \\
	&\leq \left( \fint_{B_{2\delta R}} \frac{(|D u|^2 + |D \bar{w}_1|^2 + s^2)^{\frac{p-2}{2}} |D u - D \bar{w}_1|^2}{(\alpha + |u - \bar{w}_1|)^\beta} \, dx \right)^{\frac{1}{2}} \\
	&\quad \cdot \left( \fint_{B_{2\delta R}} (|D \bar{w}_1| + s)^{(p-2)\beta} (\alpha + |u - \bar{w}_1|)^\beta \, dx \right)^{\frac{1}{2}} \\
	&\leq c \left[ \alpha^{1-\beta} \frac{|\mu|(B_{4R})}{R^n} \right]^{\frac{1}{2}} \cdot \left( \fint_{B_{2\delta R}} (|D \bar{w}_1| + s)^{(p-2)\beta} (\alpha + |u - \bar{w}_1|)^\beta \, dx \right)^{\frac{1}{2}},
\end{align*}
where we specifically apply \eqref{vvv} and Lemma \ref{vuw} in the final step of the proof.

  Now let $$\displaystyle \alpha= \left( \fint_{B_{2\delta R}} (|D \bar{w}_1| + s)^{(p-2)\beta} |u - \bar{w}_1|^\beta \, dx \right)^{\frac{1}{\beta}}.$$
Combining \eqref{duw1}, \eqref{w1w2}, \eqref{w2w3}, \eqref{w3w4}, \eqref{assume1}, \eqref{assume2} and Lemma \ref{oscw4}, we obtain
\begin{align*}
	&\fint_{B_{2\delta R}} (|D \bar{w}_1| + s)^{(p-2)\beta} (\alpha + |u - \bar{w}_1|)^\beta \, dx \\
	&\leq c \, \alpha^\beta \fint_{B_{2\delta R}} (|D \bar{w}_1| + s)^{(p-2)\beta} \, dx + c\alpha^\beta \\
	&\leq c \alpha^\beta \fint_{B_{2\delta R}}  ( |D \bar{w}_2-D \bar{w}_1| +|D \bar{w}_1 - D u| + |D u - D w_1| + |D w_1 - D w_2| + |D w_2 - D w_3| \\
	&\quad + |D w_3 - D w_4| + |D w_4| + s  )^{(p-2)\beta} + 1 \, dx \\
	&\leq c \, \alpha^\beta.
\end{align*}
Consequently, an application of Young's inequality leads to
\begin{equation*}
	Q_2 \leq c \left[ \frac{\alpha}{R} \right]^{\frac{1}{2}} \cdot \left[ \frac{|\mu|(B_{4R})}{R^{n-1}} \right]^{\frac{1}{2}} \leq \varepsilon \frac{\alpha}{R} + c(\varepsilon) \frac{|\mu|(B_{4R})}{R^{n-1}}.
\end{equation*}
 On the other hand,
\begin{align*}
\alpha &\leq c \left( \fint_{B_{2\delta R}} |D \bar{w}_1 - D w_4|^{(p-2)\beta} |u - \bar{w}_1|^\beta \, dx \right)^{\frac{1}{\beta}} + c \left( \fint_{B_{2\delta R}} (|D w_4| + s)^{(p-2)\beta} |u - \bar{w}_1|^\beta \, dx \right)^{\frac{1}{\beta}} \\
	&:= F_1 + F_2.
\end{align*}

 For $F_1$, by employing Young's inequality, Poincaré's inequality, \eqref{duw1}, \eqref{w1w2},  \eqref{w2w3}, \eqref{w3w4}, \eqref{assume1}  and \eqref{assume2}, we obtain
\begin{align}\label{F1} \nonumber
	\frac{F_1}{R} &\leq c \left( \fint_{B_{2\delta R}} |D \bar{w}_1 - D w_4|^{\beta(p-1)} \, dx \right)^{\frac{1}{\beta}} + c \left( \fint_{B_{2\delta R}} \left| \frac{u - \bar{w}_1}{R} \right|^{\beta(p-1)} \, dx \right)^{\frac{1}{\beta}} \\ \nonumber
	&\leq c \bigg( \fint_{B_{2\delta R}} \Big( |D w_4 - D w_3|^{\beta(p-1)} + |D w_3 - D w_2|^{\beta(p-1)} + |D w_2 - D w_1|^{\beta(p-1)} \\ \nonumber
	&\quad + |D w_1 - D u|^{\beta(p-1)} + |D u - D \bar{w}_1|^{\beta(p-1)}+|D\bar{w}_1 - D \bar{w}_2|^{\beta(p-1)} \Big) \, dx \bigg)^{\frac{1}{\beta}}\\ \nonumber
	&\quad + c \left( \fint_{B_{2\delta R}} |D u - D \bar{w}_1|^{\beta(p-1)} \, dx \right)^{\frac{1}{\beta}} \\
	&\leq  c \frac{|\mu|(B_{4R})}{R^{n-1}} + c \frac{D\Psi(B_{4R})}{R^{n-1}} + c \Gamma R^\sigma.
\end{align} 

 As for $F_2$, in view of Lemma \ref{oscw4} and the Sobolev-type embedding, we obtain
\begin{equation}\label{F2}
	\frac{F_2}{R} \leq c \left( \fint_{B_{2\delta R}} \left| \frac{u - \bar{w}_1}{R} \right|^{\frac{n}{n-1}} \, dx \right)^{\frac{n-1}{n}} \leq c \fint_{B_{2\delta R}} |D u - D \bar{w}_1| \, dx.  
\end{equation}
\noindent Combining \eqref{F1} with \eqref{F2}, we arrive at
\begin{equation*}
	Q_2 \leq c \varepsilon   \fint_{B_{2\delta R}} |D u - D \bar{w}_1| \, dx + c \frac{|\mu|(B_{4R})}{R^{n-1}} + c \frac{D\Psi(B_{4R})}{R^{n-1}} + c \Gamma R^\alpha,
\end{equation*}
which together with \eqref{91} and taking $\varepsilon$ sufficiently small, yields
\begin{equation*}
	\fint_{B_{2\delta R}} |D u - D \bar{w}_1| \, dx \leq c \left[ \frac{|\mu|(B_{4R})}{R^{n-1}} + \frac{D\Psi(B_{4R})}{R^{n-1}} + \Gamma R^\alpha \right].
\end{equation*}
This completes the proof of the lemma.
\end{proof}

\begin{lemma}\label{w1w2-}
 Under the assumptions \eqref{assume1}, \eqref{assume2}, we have
\begin{equation*}
	\fint_{B_{2\delta R}} |D \bar{w}_1 - D \bar{w}_2| \, dx \leq c \left[ \frac{|\mu|(B_{4R})}{R^{n-1}} + \frac{D\Psi(B_{4R})}{R^{n-1}} + \Gamma R^\alpha \right].
\end{equation*}
\end{lemma}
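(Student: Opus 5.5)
The plan is to follow the proof of Lemma \ref{uw1-} almost line by line, with the pair $(u,\bar w_1)$ replaced by $(\bar w_1,\bar w_2)$. The measure $|\mu|$ is then replaced by $D\Psi$, through the second estimate of Lemma \ref{vuw}. We keep the same parameters $\gamma=\frac{1}{4(p-1)(n+1)}$, $\beta=1+2\gamma$, $\eta=(p-2)(1+\gamma)$, which satisfy $1<\beta<\frac{n}{n-1}$ and $p-1<\eta+1<\beta(p-1)<p_m$.

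First, by Lemma \ref{oscw4} we have $|Dw_4|+s\ge 2^{-10}$ in $B_{2\delta R}$, which gives the splitting
\begin{align*}
\fint_{B_{2\delta R}}|D\bar w_1-D\bar w_2|\,dx&\le c\fint_{B_{2\delta R}}|Dw_4-D\bar w_2|^{\eta}|D\bar w_1-D\bar w_2|\,dx\\
&\quad+c\fint_{B_{2\delta R}}(|D\bar w_2|+s)^{\eta}|D\bar w_1-D\bar w_2|\,dx=:Q_1+Q_2 .
\end{align*}

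For $Q_1$, apply Young's inequality to bound it by $(\eta+1)$-averages of $|Dw_4-D\bar w_2|$ and $|D\bar w_1-D\bar w_2|$. Then telescope through $w_3,w_2,w_1,u,\bar w_1$. Each link is controlled by \eqref{duw1}, \eqref{w1w2}, \eqref{w2w3}, \eqref{w3w4}, by Lemma \ref{uw1-}, and by the $\bar w$-versions of \eqref{w1w2}, all on the ball $B_{2\delta R}$. Since $\eta+1<p_m$ and the quantities are small by \eqref{assume2}, every power reduces to the first power up to constants. The outcome is the bound $c[\frac{|\mu|(B_{4R})}{R^{n-1}}+\frac{D\Psi(B_{4R})}{R^{n-1}}+\Gamma R^\sigma]$, and it also gives the $L^{\beta(p-1)}$ control of $D\bar w_1-D\bar w_2$ that we need below.

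For $Q_2$, use Cauchy--Schwarz with the weight $(\alpha+|\bar w_1-\bar w_2|)^{\pm\beta}$, together with \eqref{vvv} and the second line of Lemma \ref{vuw}. This gives
\[
Q_2\le c\Big[\alpha^{1-\beta}\tfrac{D\Psi(B_{4R})}{R^n}\Big]^{1/2}\Big(\fint(|D\bar w_2|+s)^{(p-2)\beta}(\alpha+|\bar w_1-\bar w_2|)^\beta\Big)^{1/2}.
\]
Choose $\alpha=(\fint_{B_{2\delta R}}(|D\bar w_2|+s)^{(p-2)\beta}|\bar w_1-\bar w_2|^\beta)^{1/\beta}$. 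The second factor is then at most $c\alpha^{\beta}$, because $\fint(|D\bar w_2|+s)^{(p-2)\beta}\le c$ by the same telescoping chain down to $w_4$ and Lemma \ref{oscw4}. Young's inequality now gives $Q_2\le\varepsilon\alpha/R+c(\varepsilon)D\Psi(B_{4R})/R^{n-1}$. Next split $\alpha\le F_1+F_2$ by writing $|D\bar w_2|+s\le |D\bar w_2-Dw_4|+|Dw_4|+s$. For $F_1/R$, use Young's inequality and then Poincaré's inequality, which is valid since $\bar w_1-\bar w_2\in W^{1,p}_0(B_{2\delta R})$. This reduces $F_1/R$ to $\beta(p-1)$-averages of the telescoping differences, which were already controlled in the $Q_1$ step. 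For $F_2/R$, use $|Dw_4|+s\le 2^{10}$ and the Sobolev inequality with $\beta<\frac{n}{n-1}$, which gives $F_2/R\le c\fint|D\bar w_1-D\bar w_2|$.

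Finally, choose $\varepsilon$ small and absorb this term into the left-hand side to conclude. The main obstacle is the absorption: it requires $\fint|D\bar w_1-D\bar w_2|$ to be finite a priori, which follows from the energy estimates in $W^{1,p}$. It also requires that the telescoping powers $\beta(p-1)$ and $\eta+1$ stay below $p_m$, so that \eqref{duw1}--\eqref{w2w3} apply and the results can be linearized using the smallness in \eqref{assume2}. I expect this linearization of the exponents $\frac{q}{p-1}$ into first powers to be the delicate point.
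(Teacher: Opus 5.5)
Your proposal is correct and follows the paper's proof essentially step by step. It uses the same splitting into $Q_1+Q_2$ via Lemma \ref{oscw4}, the same telescoping for $Q_1$, the same weighted Cauchy--Schwarz step with the second estimate of Lemma \ref{vuw} and the same choice of $\alpha$, the same split $\alpha\le F_1+F_2$, and the same absorption.

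One small correction: for the $u$--$\bar w_1$ link inside the $(\eta+1)$- and $\beta(p-1)$-averages, invoke the $L^q$ comparison estimate \eqref{duw1} on $B_{2\delta R}$ (valid because both exponents are below $p_m$). Lemma \ref{uw1-} does not suffice there, since it only controls the $L^1$ average.
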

\begin{proof}
  As the proof is analogous to that of Lemma \ref{uw1-}, we shall only indicate the essential ideas. Let $\gamma, \beta, \eta$ be the constants appearing in Lemma \ref{uw1-}. Then the following holds
\begin{align*}
	\fint_{B_{2\delta R}} |D \bar{w}_1 - D \bar{w}_2| \, dx &\leq c \fint_{B_{2\delta R}} |D   {w}_4 - D \bar{w}_2|^\eta |D \bar{w}_1 - D \bar{w}_2| \, dx + c \fint_{B_{2\delta R}} (|D \bar{w}_2| + s)^\eta |D \bar{w}_1 - D \bar{w}_2| \, dx \\
	&:= Q_1 + Q_2.
\end{align*}

 As for the term $Q_1$, we deduce that
\begin{align*}
	Q_1 &\leq c \fint_{B_{2\delta R}} \bigg( |D w_4 - D w_3|^{\eta+1} + |D w_3 - D w_2|^{\eta+1} + |D w_2 - D w_1|^{\eta+1} + |D w_1 - D u|^{\eta+1}\\
	& + |D u - D \bar{w}_1|^{\eta+1} + |D \bar{w}_1 - D \bar{w}_2|^{\eta+1} \bigg) \, dx \\
	&\leq c \Gamma R^\sigma + c \frac{|\mu|(B_{4R})}{R^{n-1}} + c \frac{D\Psi(B_{4R})}{R^{n-1}}.
\end{align*}

Regarding $Q_2$, an application of Lemma \ref{vuw} yields
\begin{align*}
	Q_2 &\leq \left( \fint_{B_{2\delta R}} \frac{(|D \bar{w}_1|^2 + |D \bar{w}_2|^2 + s^2)^{\frac{p-2}{2}} |D \bar{w}_1 - D \bar{w}_2|^2}{(\alpha + |\bar{w}_1 - \bar{w}_2|)^\beta} \, dx \right)^{\frac{1}{2}} \cdot \left( \fint_{B_{2\delta R}} (|D \bar{w}_2| + s)^{(p-2)\beta} (\alpha + |\bar{w}_1 - \bar{w}_2|)^\beta \, dx \right)^{\frac{1}{2}} \\
	&\leq c \left[ \alpha^{1-\beta} \frac{D\Psi(B_{4R})}{R^n} \right]^{\frac{1}{2}} \cdot  \left( \fint_{B_{2\delta R}} (|D \bar{w}_2| + s)^{(p-2)\beta} (\alpha + |\bar{w}_1 - \bar{w}_2|)^\beta \, dx \right)^{\frac{1}{2}}.
\end{align*}

Next, we let $$\alpha = \left( \fint_{B_{2\delta R}} (|D \bar{w}_2| + s)^{(p-2)\beta} |\bar{w}_1 - \bar{w}_2|^\beta \, dx \right)^{\frac{1}{\beta}},$$  
then we have
\begin{align*}
&	\fint_{B_{2\delta R}} (|D \bar{w}_2| + s)^{(p-2)\beta} (\alpha + |\bar{w}_1 - \bar{w}_2|)^\beta dx \\
	  &\leq c \alpha^\beta \fint_{B_{2\delta R}} \Big( |D\bar{w}_2 - D \bar{w}_1| + |D\bar{w}_1 - D u|  + |D u - D w_1|  \\  
	&\quad + |D w_1 - D  w_2|  + |D  w_2 - D  w_3| +|D  w_3 - D  w_4|+|Dw_4|+s \Big)^{\beta(p-2)}+1 \, dx \\
	&\leq c \alpha^\beta.
\end{align*}
Therefore, 
\begin{equation*}
	Q_2  \leq \varepsilon \frac{\alpha}{R} + c(\varepsilon) \frac{D\Psi(B_{4R})}{R^{n-1}}.
\end{equation*}
Next we estimate $\alpha$,
\begin{align*}
\alpha &\leq c \left( \fint_{B_{2\delta R}} |D \bar{w}_2 - D w_4|^{(p-2)\beta} |\bar{w}_1 - \bar{w}_2|^\beta \, dx \right)^{\frac{1}{\beta}} + c \left( \fint_{B_{2\delta R}} (|D w_4| + s)^{(p-2)\beta} |\bar{w}_1 - \bar{w}_2|^\beta \, dx \right)^{\frac{1}{\beta}} \\
	&:= F_1 + F_2.
\end{align*}

Concerning the term $F_1$, we obtain  
\begin{align*}
	\frac{F_1}{R} &\leq c \bigg( \fint_{B_{2\delta R}} |D w_4 - D w_3|^{\beta(p-1)} + |D w_3 - D w_2|^{\beta(p-1)} + |D w_2 - D w_1|^{\beta(p-1)} \\
	&\quad + |D w_1 - D u|^{\beta(p-1)} + |D u - D \bar{w}_1|^{\beta(p-1)}+ |D \bar{w}_1 - D \bar{w}_2|^{\beta(p-1)}\, dx \bigg)^{\frac{1}{\beta}} \\
	&\quad+ c \left( \fint_{B_{2\delta R}} |D \bar{w}_1 - D \bar{w}_2|^{\beta(p-1)} \, dx \right)^{\frac{1}{\beta}} \\
	 &\leq c\frac{|\mu|(B_{4R})}{R^{n-1}} + c \frac{D\Psi(B_{4R})}{R^{n-1}} + c \Gamma R^\alpha.
\end{align*}

As for the term $F_2$, we derive
\[
\frac{F_2}{R} \le c \fint_{B_{2\delta R}} |D\bar{w}_1 - D\bar{w}_2| \, dx.
\]
Consequently, we have
\[
Q_2 \le c \varepsilon    \fint_{B_{2\delta R}} |D\bar{w}_1 - D\bar{w}_1| \, dx + c \frac{|\mu|(B_{4R})}{R^{n-1}} + c \frac{ {D}\Psi(B_{4R})}{R^{n-1}} + c \Gamma R^\alpha.
\]
Combining the previous estimates for $Q_1$ and $Q_2$, and choosing $\varepsilon$ to be sufficiently small to absorb the corresponding terms, we have
\[
\fint_{B_{2\delta R}} |D\bar{w}_1 - D\bar{w}_2| \, dx \le c \left[ \frac{|\mu|(B_{4R})}{R^{n-1}} + \frac{ {D}\Psi(B_{4R})}{R^{n-1}} + \Gamma R^\alpha \right].
\]
This completes the proof.
\end{proof}
The proof strategy of the following lemma is similar to Lemma \ref{uw1-} and Lemma \ref{w1w2-}, with the key distinction being the utilization of Lemma \ref{vuw} in the proof process.

\begin{lemma}\label{w2w3-}
	Suppose that the assumptions \eqref{assume1} and \eqref{assume2} hold. Then we have
	\[
	\fint_{B_{2\delta R}} |D\bar{w}_2 - D\bar{w}_3| \, dx \le c \left[ \frac{|\mu|(B_{4R})}{R^{n-1}} + \frac{ {D}\Psi(B_{4R})}{R^{n-1}} + \Gamma R^\alpha \right].
	\]
\end{lemma}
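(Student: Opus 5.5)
I will follow the scheme of Lemma \ref{uw1-} and Lemma \ref{w1w2-} verbatim, with the pair $(\bar w_2,\bar w_3)$ in place of $(u,\bar w_1)$ and the third estimate of Lemma \ref{vuw} as the energy input. With $\gamma,\beta,\eta$ as in Lemma \ref{uw1-}, Lemma \ref{oscw4} gives $|Dw_4|+s\ge 2^{-10}$ on $B_{2\delta R}$. Hence I split
\begin{align*}
\fint_{B_{2\delta R}}|D\bar w_2-D\bar w_3|\,dx
&\le c\fint_{B_{2\delta R}}|Dw_4-D\bar w_3|^{\eta}|D\bar w_2-D\bar w_3|\,dx\\
&\quad+c\fint_{B_{2\delta R}}(|D\bar w_3|+s)^{\eta}|D\bar w_2-D\bar w_3|\,dx=:Q_1+Q_2.
\end{align*}

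For $Q_1$ I use Young's inequality with exponent $\eta+1<p_m$. I then expand $Dw_4-D\bar w_3$ along the chain $w_4\to w_3\to w_2\to w_1\to u\to\bar w_1\to\bar w_2\to\bar w_3$. The unbarred differences are controlled by \eqref{duw1}, \eqref{w1w2}, \eqref{w2w3}, \eqref{w3w4} together with \eqref{assume1} and \eqref{assume2}. The barred differences $Du-D\bar w_1$ and $D\bar w_1-D\bar w_2$ come from the small-ball analogues of \eqref{duw1} and \eqref{w1w2}, i.e.\ Lemmas 3.5 and 3.7 of \cite{s26} applied on $B_{2\delta R}$ with exponent $\eta+1$. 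The remaining piece $|D\bar w_2-D\bar w_3|^{\eta+1}$ is the analogue of \eqref{w2w3} on $B_{2\delta R}$. Since all these quantities are small by \eqref{assume2}, raising to the power $\eta+1>p-1$ yields a bound by $c[\,|\mu|(B_{4R})/R^{n-1}+D\Psi(B_{4R})/R^{n-1}+\Gamma R^\sigma]$, exactly as in \eqref{91}.

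For $Q_2$ I apply Cauchy--Schwarz with weight $(\alpha+|\bar w_2-\bar w_3|)^{\beta}$ and use \eqref{vvv} with the third line of Lemma \ref{vuw}, with $h=\alpha$ and $\xi=\beta$. The parameter is
\[
\alpha=\Big(\fint_{B_{2\delta R}}(|D\bar w_3|+s)^{(p-2)\beta}|\bar w_2-\bar w_3|^{\beta}dx\Big)^{1/\beta}.
\]
This gives $Q_2\le c[\alpha^{1-\beta}D\Psi(B_{4R})/R^n]^{1/2}\big(\fint(|D\bar w_3|+s)^{(p-2)\beta}(\alpha+|\bar w_2-\bar w_3|)^\beta\big)^{1/2}$. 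The second factor is bounded by $c\alpha^\beta$ after expanding $D\bar w_3$ along the same chain down to $Dw_4$, which is bounded by Lemma \ref{oscw4}. Young's inequality then gives $Q_2\le\varepsilon\alpha/R+c(\varepsilon)D\Psi(B_{4R})/R^{n-1}$.

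It remains to bound $\alpha\le F_1+F_2$, replacing $|D\bar w_3|+s$ by $|D\bar w_3-Dw_4|+(|Dw_4|+s)$. For $F_1$, Young's inequality with exponents $\frac{p-1}{p-2}$ and $p-1$, followed by Poincar\'e on $\bar w_2-\bar w_3\in W^{1,1}_0$, reduces $F_1/R$ to $L^{\beta(p-1)}$ norms of the same chain. Since $\beta(p-1)<p_m$, the comparison estimates apply and give the small right-hand side. For $F_2$, the bound $|Dw_4|+s\le2^{10}$ and the Sobolev--Poincar\'e inequality with $\beta<\frac{n}{n-1}$ give $F_2/R\le c\fint|D\bar w_2-D\bar w_3|$. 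Choosing $\varepsilon$ small absorbs this term into the left side and concludes the proof.

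The main obstacle I expect is the $Q_1$ and $F_1$ step. There the term $|D\bar w_2-D\bar w_3|^{\eta+1}$ (resp.\ $|D\bar w_2-D\bar w_3|^{\beta(p-1)}$) reappears, and it must be estimated a priori via the $B_{2\delta R}$ analogue of \eqref{w2w3} rather than circularly. It also carries the ball-size factors $\delta^{-n}$, which must be absorbed by the $\delta^{2n}$ smallness built into \eqref{assume2}.
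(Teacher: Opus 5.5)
Your proposal is correct and matches the paper's approach: the paper omits the proof and only says it runs as in Lemmas \ref{uw1-} and \ref{w1w2-}, now using the third estimate of Lemma \ref{vuw}. Your $Q_1/Q_2$ and $F_1/F_2$ decomposition, with the weight $(|D\bar w_3|+s)$ and the $B_{2\delta R}$ comparison estimates feeding $Q_1$ and $F_1$, is exactly that argument.
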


\begin{lemma}\label{w3w4-}
	Under the assumptions \eqref{assume1} and \eqref{assume2}, we derive
	\[
	\fint_{B_{\delta R}} |D\bar{w}_3 - D\bar{w}_4| \, dx \le c \Gamma R^\alpha.
	\]
\end{lemma}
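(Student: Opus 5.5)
The estimate is the analogue, at scale $\delta R$, of \eqref{w3w4}, which comes from Lemma 3.3 of \cite{kim}. I would rerun that comparison argument for the pair $\bar w_3,\bar w_4$ and then turn the resulting $L^p$ bound into an $L^1$ bound. The structural difference is that the right-hand side must now be controlled by $\Gamma R^\alpha$ alone, reading $\alpha$ as the Hölder exponent $\sigma$ from \eqref{omegaz}. So the factor $\big(\fint(|Du|+|D\bar w_3|+s)\big)^{1+\sigma}$ has to be bounded by a universal constant, using \eqref{assume1}, \eqref{assume2} and the comparison lemmas just proved.

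\textbf{Step 1 (energy comparison).} Test the equations for $\bar w_3$ and $\bar w_4$ with $\bar w_3-\bar w_4\in W^{1,p}_0(B_{\delta R})$ and subtract. By the monotonicity in \eqref{0a}, with $p\ge 2$, this gives
\begin{align*}
\fint_{B_{\delta R}}|D\bar w_3-D\bar w_4|^p\,dx\le c\fint_{B_{\delta R}}\big|a(x,u,D\bar w_3)-a(x,(u)_{B_{\delta R}},D\bar w_3)\big|\,|D\bar w_3-D\bar w_4|\,dx .
\end{align*}
Apply \eqref{omegaz} to the difference of $a$, then Young's inequality, and absorb the $|D\bar w_3-D\bar w_4|^p$ term. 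This leaves the bound
\begin{align*}
\Gamma^{p}\fint_{B_{\delta R}}|u-(u)_{B_{\delta R}}|^{p\sigma}(|D\bar w_3|+s)^p\,dx .
\end{align*}
Next use the higher integrability of $D\bar w_3$ (exponent $\sigma_1>p$, Lemma 1.1 of \cite{kim}) together with Hölder's inequality. The admissible range $\sigma\le \frac1p-\frac1{\sigma_1}$ is exactly what allows this. Combined with Sobolev--Poincaré on $u$, exactly as in \cite[Lemma 3.3]{kim}, this yields
\begin{align*}
\fint_{B_{\delta R}}|D\bar w_3-D\bar w_4|^p\,dx\le (c\Gamma R^\sigma)^p\Big(\fint_{B_{2\delta R}}(|Du|+|D\bar w_3|+s)\,dx\Big)^{p(1+\sigma)}.
\end{align*}
Here the factor $\delta^\sigma$ is absorbed into $c$, since $\delta$ depends only on $n,p,l,L$. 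Hölder's inequality then gives the same bound for the $L^1$ average of $|D\bar w_3-D\bar w_4|$, with exponent $1$ in place of $p$.

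\textbf{Step 2 (bounding the bracket).} By \eqref{assume1}, $(|Du|+s)_{B_{2\delta R}}\le 4$. For $\bar w_3$, write
\begin{align*}
|D\bar w_3|\le |Du|+|Du-D\bar w_1|+|D\bar w_1-D\bar w_2|+|D\bar w_2-D\bar w_3| .
\end{align*}
Lemmas \ref{uw1-}, \ref{w1w2-} and \ref{w2w3-} bound the averages of the last three terms by $c[\,|\mu|(B_{4R})/R^{n-1}+D\Psi(B_{4R})/R^{n-1}+\Gamma R^\alpha]$. By \eqref{assume2} each of these quantities is at most a universal constant; the first two are small only after raising to the power $\frac1{p-1}$, but they are still bounded by $4^{n-1}$ times a small number. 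Hence $\fint_{B_{2\delta R}}(|Du|+|D\bar w_3|+s)\,dx\le c$, and therefore the bracket raised to $1+\sigma$ is at most $c$. This gives the claim.

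\textbf{Main obstacle.} The delicate point is Step 1. Its right-hand side involves the oscillation of the possibly unbounded $u$, and this must be converted into a gradient average without losing integrability. The Hölder-exponent bookkeeping with $\sigma_1$ is where I would follow \cite[Lemma 3.3]{kim} verbatim. Step 2 is routine once the three preceding lemmas are available.
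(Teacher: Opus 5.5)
Your proposal is correct and follows the paper's argument. Like the paper, you apply the $\delta R$-scale version of the comparison \eqref{w3w4} from \cite[Lemma 3.3]{kim}, which you re-derive via monotonicity, \eqref{omegaz}, higher integrability and Poincar\'e, to get the bound $c\Gamma R^{\sigma}\big(\fint_{B_{2\delta R}}(|Du|+|D\bar w_3|+s)\,dx\big)^{1+\sigma}$ (reading $\alpha$ as $\sigma$). You then bound the bracket through $\bar w_1,\bar w_2$ using \eqref{assume1} and \eqref{assume2}; the one cosmetic difference is that you use Lemmas \ref{uw1-}, \ref{w1w2-}, \ref{w2w3-} for the difference terms, whereas the paper uses the $2\delta R$-scale analogues of \eqref{duw1}, \eqref{w1w2}, \eqref{w2w3}, and either gives a universal bound.
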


\begin{proof}
 
A combination of \eqref{duw1}, \eqref{w1w2}, \eqref{w2w3}, \eqref{assume1} and \eqref{assume2},  yields
\begin{align*}
	\fint_{B_{\delta R}} |D\bar{w}_3 - D\bar{w}_4| \, dx 
	&\le c \Gamma R^\alpha   \left( \fint_{B_{2\delta R}}( |D {u} | + |D\bar{w}_3| + s) \, dx \right)^{1+\sigma } \\
	&\le c \Gamma R^\alpha \cdot \left[ 2(|D {u} | + s)_{B_{2\delta R}} + (|D {u} - D\bar{w}_1|)_{B_{2\delta R}} + (|D\bar{w}_1 - D\bar{w}_2|)_{B_{2\delta R}} \right. \\
	&\quad \left. + (|D\bar{w}_2 - D\bar{w}_3|)_{B_{2\delta R}} \right]^{1+\sigma} \\
	&\le c \Gamma R^\alpha,
\end{align*}
which completes the proof.
\end{proof}

\begin{lemma}  \label{w4w5-}
Assume that the conditions \eqref{assume1}, \eqref{assume2}  and \eqref{assume3} are satisfied. Then we obtain
	\begin{align*}
		\fint_{B_{\delta^2 R}} |D\bar{w}_4 - D\bar{w}_5| \, dx \le c \omega(R).
	\end{align*}
\end{lemma}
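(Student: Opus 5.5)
The plan is to reduce the estimate to the comparison estimate \eqref{w4w5---} at scale $\delta R$, transported to the barred functions. The key point is that $\bar{w}_4$ and $\bar{w}_5$ differ only through freezing $x$ at $x_0$ on $B_{\delta^2 R}$. So the analogue of \eqref{w4w5---}, which follows from Lemma 3.4 in \cite{kim}, gives
\begin{align*}
\fint_{B_{\delta^2 R}} \left( |D\bar w_4|^2 + |D\bar w_5|^2+s^2 \right)^{\frac{p-2}{2}} |D\bar w_4-D\bar w_5|^2 dx \leq c [\omega(\delta^2R)]^2 \fint_{B_{\delta^2 R}} (|D\bar w_4| + s)^p dx .
\end{align*}
Since $p\ge2$, the weight on the left is bounded below by $s^{p-2}$. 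That bound is useless when $s$ is small, so instead we need a lower bound $|D\bar w_4|+s\ge c^{-1}$ on $B_{\delta^2R}$ and an upper bound $|D\bar w_4|+s\le c$ there. This is where the nondegeneracy assumptions \eqref{assume1}--\eqref{assume3} enter, exactly as in Lemma \ref{oscw4}.

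\textbf{Step 1: two-sided bounds for $D\bar w_4$.} We establish the analogue of Lemma \ref{oscw4} one scale down:
\[
\frac{1}{2^{10}}\le |D\bar w_4|+s\le 2^{10}\quad\text{in } B_{\delta^2 R}.
\]
\begin{itemize}
\item First bound $(|D\bar w_4|+s)_{B_{\delta R}}$ by $(|Du|+s)_{B_{2\delta R}}\le4$ plus the errors from Lemmas \ref{uw1-}, \ref{w1w2-}, \ref{w2w3-} and \ref{w3w4-}. By \eqref{assume2}, each error is at most a small multiple of $\delta^{2n}$.
\item Next apply Lemma \ref{w4} at radius $\delta R/2$ (so that $2\delta_2\cdot\delta R/2\ge \delta^2R$ because $\delta\le\delta_2$). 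This shows that $D\bar w_4$ oscillates by at most $2^{-10}$ on $B_{\delta^2R}$.
\item Finally compare averages on $B_{\delta^2R}$ using \eqref{assume3}. Here $1/4\le (|Du|+s)_{B_{\delta^2R}}\le 4$, and the comparison errors are multiplied by $\delta^{-2n}$, which \eqref{assume2} absorbs through its factor $\delta^{2n}/2^{2n+16}$.
\end{itemize}
Together these give the pointwise two-sided bound.

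\textbf{Step 2: passing to $D\bar w_5$.} With $|D\bar w_4|+s\ge 2^{-10}$ and $p\ge 2$, the weight $(|D\bar w_4|^2+|D\bar w_5|^2+s^2)^{(p-2)/2}$ is bounded below by a constant $c^{-1}$. Indeed, $(|D\bar w_4|^2+s^2)^{1/2}\ge (|D\bar w_4|+s)/\sqrt2$.

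\textbf{Step 3: conclusion.} By Step 2 and the displayed inequality,
\[
\fint_{B_{\delta^2R}}|D\bar w_4-D\bar w_5|^2dx\le c\,\omega(\delta^2R)^2\fint_{B_{\delta^2R}}(|D\bar w_4|+s)^pdx\le c\,\omega(R)^2 ,
\]
using the upper bound $2^{10}$ and the monotonicity of $\omega$. Hölder's inequality then yields $\fint_{B_{\delta^2R}}|D\bar w_4-D\bar w_5|\,dx\le c\,\omega(R)$.

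\textbf{Main obstacle.} The hard part is Step 1, namely verifying the smallness hypothesis of Lemma \ref{w4} for $\bar w_4$. This requires showing that the Lemma \ref{uw1-}--\ref{w3w4-} bounds, which are linear in $|\mu|(B_{4R})/R^{n-1}$ rather than in its $1/(p-1)$ power, are still small under \eqref{assume2}. This holds because \eqref{assume2} forces these quantities below $1$, so the $1/(p-1)$ power dominates; some bookkeeping of the powers of $\delta$ is also needed.
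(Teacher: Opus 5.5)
Your outline follows the paper's proof step for step:
\begin{enumerate}
\item bound $(|D\bar w_4|+s)_{B_{\delta R}}$;
\item apply Lemma \ref{w4} on $B_{\delta R}$ (using $\delta\le\delta_2$) to get oscillation at most $2^{-10}$ on $B_{\delta^2R}$;
\item compare averages on $B_{\delta^2R}$ via \eqref{assume3} to get $2^{-10}\le|D\bar w_4|+s\le 2^{10}$ there;
\item use $p\ge2$ to bound the weight in \eqref{w4w5---} from below, and finish with H\"older.
\end{enumerate}
Your Steps 2 and 3 are fine as written.

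The weak point is how you justify smallness in Step 1. The delicate place is not the hypothesis of Lemma \ref{w4}, which only needs the upper bound $2^{n+10}$. It is the comparison of averages on $B_{\delta^2R}$, where the errors must stay below $2^{-10}$ after a $\delta^{-n}$-type blow-up.

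You feed in Lemmas \ref{uw1-}--\ref{w3w4-}, but these carry unspecified constants $c(n,p,l,L)$. In the proof of Lemma \ref{uw1-} they pick up, for example, a factor $(2/\delta)^n$ from passing from $B_{4R}$ to $B_{2\delta R}$, as well as the constant of Lemma \ref{vuw}. By contrast, \eqref{assume2} only calibrates $c_1,c_2,c_3,c_5$. Your remark that a linear power is smaller than the $1/(p-1)$ power handles the exponents, not the constants. For $p=2$ it gains nothing: you are left with an error of size $c\,4^{n-1}\delta^{2n}/(c_1 2^{2n+16})$. After the blow-up factor there is no reason for this to be below $2^{-10}$, so the claim ``each error is at most a small multiple of $\delta^{2n}$'' does not follow from \eqref{assume2}.

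The paper avoids this by applying the comparison estimates directly at the smaller scales:
\begin{itemize}
\item \eqref{duw1}, \eqref{w1w2}, \eqref{w2w3} to $u,\bar w_1,\bar w_2,\bar w_3$ on $B_{2\delta R}$;
\item \eqref{w3w4} to $\bar w_3,\bar w_4$ on $B_{\delta R}$.
\end{itemize}
The constants are then exactly $c_1,c_2,c_3$. The only price is $|\mu|(B_{2\delta R})/(2\delta R)^{n-1}\le(2/\delta)^{n-1}|\mu|(B_{4R})/(4R)^{n-1}$, and likewise for $D\Psi$. This is a factor $(2/\delta)^{(n-1)/(p-1)}\le(2/\delta)^{n-1}$. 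Combined with the shrinking factor $(2/\delta)^n$ from $B_{2\delta R}$ to $B_{\delta^2R}$, the errors are at most $3\delta\,2^{-17}$, which is what makes the average comparison close.

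If you adopt this route, keep the sharp factor $(2/\delta)^n$ rather than your cruder $\delta^{-2n}$. The latter would leave $(2/\delta)^{n-1}2^{-2n-16}$, which is not small.
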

\begin{proof}
By virtue of \eqref{duw1}, \eqref{w1w2}, \eqref{w2w3}, \eqref{w3w4}, \eqref{assume1}  and \eqref{assume2}, we arrive at
	\begin{align*}
		(|D\bar{w}_4| + s)_{B_{\delta R}} 
		&\le (|D\bar{w}_4 - D\bar{w}_3|)_{B_{\delta R}} + 2^n \left[ (|D\bar{w}_3 - D\bar{w}_2|)_{B_{2\delta R}} \right. \nonumber \\
		&\quad \left. + (|D\bar{w}_2 - D\bar{w}_1|)_{B_{2\delta R}} + (|D\bar{w}_1 - D {u}|)_{B_{2\delta R}} + (|Du| + s)_{B_{2\delta R}} \right] \nonumber \\
		&\le \frac{\delta^{2n}}{2^{2n+16}} \left[ 2(|Du| + s)_{B_{2\delta R}} + (|Du - D\bar{w}_1|)_{B_{2\delta R}} + (|D\bar{w}_1 - D\bar{w}_2|)_{B_{2\delta R}} \right. \nonumber \\
		&\quad \left. + (|D\bar{w}_2 - D\bar{w}_3|)_{B_{2\delta R}} \right]^{1+\sigma} \nonumber \\
		&\quad + 2^n \left\{ c_1 \left[ \frac{|\mu|(B_{2\delta R})}{(2\delta R)^{n-1}} \right]^{\frac{1}{p-1}} + 2c_2 \left[ \frac{ {D}\Psi(B_{2\delta R})}{(2\delta R)^{n-1}} \right]^{\frac{1}{p-1}} + 4 \right\} \nonumber \\
		&\le \frac{\delta^{2n}}{2^{2n+16}}   \left[ 8 + \frac{3 \cdot (2\delta^{-1})^{\frac{n-1}{p-1}} \cdot \delta^{2n}}{2^{2n+16}} \right]^{1+\sigma} \nonumber \\
		&\quad + 2^n   \left[ \frac{3 \cdot (2\delta^{-1})^{\frac{n-1}{p-1}} \cdot \delta^{2n}}{2^{2n+16}} + 4 \right] \nonumber \\
		&\le 2^{n+3}.
	\end{align*}
Then it  follows from Lemma \ref{w4} that
	\begin{align}\label{dw4e}
		\sup_{x,y \in B_{\delta^2 R}} |D \bar{w}_4(x) - D \bar{w}_4(y)| \le \frac{1}{2^{10}}.  
	\end{align}
	On the other hand, we estimate
	\begin{align*}
		\left| (|Du|)_{B_{\delta^2 R}} - (|D\bar{w}_4|)_{B_{\delta^2 R}} \right| 
		&\le \left| (|Du|)_{B_{\delta^2 R}} - (|D\bar{w}_1|)_{B_{\delta^2 R}} \right| + \left| (|D\bar{w}_1|)_{B_{\delta^2 R}} - (|D\bar{w}_2|)_{B_{\delta^2 R}} \right| \nonumber \\
		&\quad + \left| (|D\bar{w}_2|)_{B_{\delta^2 R}} - (|D\bar{w}_3|)_{B_{\delta^2 R}} \right| + \left| (|D\bar{w}_3|)_{B_{\delta^2 R}} - (|D\bar{w}_4|)_{B_{\delta^2 R}} \right| \nonumber \\
		&\le 3   (2\delta^{-1})^n   \frac{(2\delta^{-1})^{\frac{n-1}{p-1}}\delta^{2n}}{2^{2n+16}} + \delta^{-n}   \frac{ \delta^{2n}}{2^{2n+16}}   \left[ 4 + \frac{3   (2\delta^{-1})^{\frac{n-1}{p-1}}   \delta^{2n}}{2^{2n+16}} \right]^{1+\sigma} \nonumber \\
		&\le \frac{1}{2^{10}}.
	\end{align*}
From \eqref{assume3}, it follows that
	\begin{align*}
		\frac{1}{2^9} \le (|D\bar{w}_4| + s)_{B_{\delta^2 R}} \le 2^9.
	\end{align*}
Then \eqref{dw4e} leads to the estimate
	\begin{align*}
		\frac{1}{2^{10}} \le |D\bar{w}_4| + s \le 2^{10} \quad \text{in} \quad B_{\delta^2 R}.
	\end{align*}
Noting that $p\geqslant2$ and applying \eqref{w4w5---}, we obtain
\begin{align*}
	\fint_{B_{\delta^2 R}} |D\bar{w}_4 - D\bar{w}_5|^2 \, dx 
		&\le   c\fint_{B_{\delta^2 R}} (|D\bar{w}_4|^2  + s^2)^{\frac{p-2}{2}} \cdot |D\bar{w}_4 - D\bar{w}_5|^2 \, dx   \nonumber \\
	&\le   c\fint_{B_{\delta^2 R}} (|D\bar{w}_4|^2 + |D\bar{w}_5|^2 + s^2)^{\frac{p-2}{2}} \cdot |D\bar{w}_4 - D\bar{w}_5|^2 \, dx   \nonumber \\
	&\le c   w(R)^2 \fint_{B_{\delta^2 R}} (|D\bar{w}_4| + s)^p \, dx  \nonumber \\
	&\le c\omega(R)^2.
\end{align*}
Finally, the proof of the lemma is completed by applying H\"older's inequality.
\end{proof}

\begin{lemma}\label{oscu}
	Under the assumptions \eqref{assume1}, \eqref{assume2} and \eqref{assume3},  we obtain
	\begin{align*}
		\fint_{B_{\frac{\delta^3 R}{2}}} |Du - (Du)_{B_{2\delta^3 R}}| \, dx 
		&\le \frac{1}{4} \fint_{B_{\delta^2 R}} |Du - (Du)_{B_{\delta^2 R}}| \, dx  \nonumber \\
		&\quad +c_7 \left[ \frac{|\mu|(B_{4R})}{(4R)^{n-1}} + \frac{ {D}\Psi(B_{4R})}{(4R)^{n-1}} + \Gamma (4R)^\alpha + \omega(4R) \right].
	\end{align*}
\end{lemma}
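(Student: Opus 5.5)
The plan mirrors the proof of Lemma \ref{du-6}, now run at the smaller scale with the barred comparison chain $\bar w_1,\dots,\bar w_5$. The homogeneous frozen problem for $\bar w_5$ lives on $B_{\delta^2R}$, so Lemma \ref{w5} is applied with $R$ replaced by $\delta^2R$.

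\textbf{Step 1: excess decay for $\bar w_5$.} Lemma \ref{w5} on $B_{\delta^2R}$ gives
\[
\fint_{B_{\delta^3R/2}}|D\bar w_5-(D\bar w_5)_{B_{2\delta^3R}}|\,dx\le 2^{-2n-10}\fint_{B_{\delta^2R}}|D\bar w_5-(D\bar w_5)_{B_{\delta^2R}}|\,dx .
\]

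\textbf{Step 2: transfer the decay to $Du$.} Write $Du=D\bar w_5+(Du-D\bar w_5)$ on $B_{\delta^3R/2}$. The constant $(Du)_{B_{2\delta^3R}}$ differs from $(D\bar w_5)_{B_{2\delta^3R}}$ by at most the average of $|Du-D\bar w_5|$ over $B_{2\delta^3R}$. Since all balls are comparable up to powers of $\delta$, we obtain
\[
\fint_{B_{\delta^3R/2}}|Du-(Du)_{B_{2\delta^3R}}|\,dx\le 2^{-2n-10}\fint_{B_{\delta^2R}}|D\bar w_5-(D\bar w_5)_{B_{\delta^2R}}|\,dx+c(\delta)\fint_{B_{\delta^2R}}|Du-D\bar w_5|\,dx .
\]
Next replace $D\bar w_5$ by $Du$ in the first term, using \eqref{1.8} and the triangle inequality. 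This gives at most
\[
2^{-2n-8}\fint_{B_{\delta^2R}}|Du-(Du)_{B_{\delta^2R}}|\,dx+c\fint_{B_{\delta^2R}}|Du-D\bar w_5|\,dx ,
\]
and $2^{-2n-8}\le\frac14$. Since $\delta=\delta(n,p,l,L)$, all factors $\delta^{-n}$ are absorbed into $c_7$.

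\textbf{Step 3: bound $\fint_{B_{\delta^2R}}|Du-D\bar w_5|\,dx$.} Telescope through the chain
\[
|Du-D\bar w_5|\le |Du-D\bar w_1|+|D\bar w_1-D\bar w_2|+|D\bar w_2-D\bar w_3|+|D\bar w_3-D\bar w_4|+|D\bar w_4-D\bar w_5|,
\]
and enlarge each average to the ball on which it is controlled, at the cost of a factor $c(\delta)$. The pieces are handled as follows.
\begin{itemize}
\item Lemmas \ref{uw1-}, \ref{w1w2-}, \ref{w2w3-} bound the first three on $B_{2\delta R}$ by $c[|\mu|(B_{4R})/R^{n-1}+D\Psi(B_{4R})/R^{n-1}+\Gamma R^\alpha]$.
\item Lemma \ref{w3w4-} bounds the fourth on $B_{\delta R}$ by $c\Gamma R^\alpha$.
\item Lemma \ref{w4w5-} bounds the last on $B_{\delta^2R}$ by $c\,\omega(R)$, and $\omega(R)\le\omega(4R)$ by monotonicity.
\end{itemize}
Rewriting $R^{n-1}$ as $(4R)^{n-1}$ and $R^\alpha$ as $(4R)^\alpha$ only changes constants.

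\textbf{Main obstacle.} Step 3 hinges on Lemma \ref{w4w5-}, which needs \eqref{assume3} to get the two-sided bound on $|D\bar w_4|+s$ and hence a linear (rather than $p$-th power) estimate. Steps 1 and 2 are routine, provided one is careful that the mismatch between the centering ball $B_{2\delta^3R}$ and the integration ball $B_{\delta^3R/2}$ in Lemma \ref{w5} is handled exactly as in Lemma \ref{du-6}. The one extra check is that the smallness \eqref{assume2} makes all hypotheses of the cited lemmas valid, in particular that Lemma \ref{w4} applies at scale $\delta R$; this is already verified inside the proof of Lemma \ref{w4w5-}.
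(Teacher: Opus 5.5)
Your proposal is correct and follows the paper's proof essentially step for step. You apply Lemma \ref{w5} to $\bar w_5$ at scale $\delta^2R$, pass from $D\bar w_5$ to $Du$ by the triangle inequality (absorbing the $\delta$-dependent factors into $c_7$), and bound $\fint_{B_{\delta^2R}}|Du-D\bar w_5|\,dx$ by telescoping through $\bar w_1,\dots,\bar w_5$ with Lemmas \ref{uw1-}, \ref{w1w2-}, \ref{w2w3-}, \ref{w3w4-} and \ref{w4w5-}; your explicit handling of the integration ball $B_{\delta^3R/2}$ versus the centering ball $B_{2\delta^3R}$ is in fact slightly more careful than the paper's display, which works on $B_{2\delta^3R}$ throughout.
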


\begin{proof}
	By virtue of Lemma \ref{w5}, we have
	\begin{align*}
		\fint_{B_{2\delta^3 R}} |Du - (Du)_{B_{2\delta^3 R}}| \, dx 
		&\le \fint_{B_{2\delta^3 R}} |D\bar{w}_5 - (D\bar{w}_5)_{B_{2\delta^3 R}}| \, dx + \fint_{B_{2\delta^3 R}} |Du - D\bar{w}_5| \, dx \nonumber \\
		&\le \frac{1}{2^{2n+10}} \fint_{B_{\delta^2 R}} |D\bar{w}_5 - (D\bar{w}_5)_{B_{\delta^2 R}}| \, dx + \fint_{B_{2\delta^3 R}} |Du - D\bar{w}_5| \, dx \nonumber \\
		&\le \frac{1}{2^{2n+10}} \fint_{B_{\delta^2 R}} |Du - (Du)_{B_{\delta^2 R}}| \, dx + c \fint_{B_{\delta^2 R}} |Du - D\bar{w}_5| \, dx.
	\end{align*}
	We make use of Lemma \ref{uw1-}, Lemma \ref{w1w2-}, Lemma \ref{w2w3-}, Lemma \ref{w3w4-}, and Lemma \ref{w4w5-} to conclude that
	\begin{align*}
		\fint_{B_{\delta^2 R}} |Du - D\bar{u}_5| \, dx 
		&\le c \fint_{B_{2\delta  R}} |Du - D\bar{w}_1|+  |D\bar{w}_1 - D\bar{w}_2|+|D\bar{w}_2 - D\bar{w}_3|\,   dx   \\
			&\le  c \fint_{B_{\delta R}} |D\bar{w}_3 - D\bar{w}_4| \, dx + \fint_{B_{\delta^2 R}} |D\bar{w}_4 - D\bar{w}_5| \, dx \\
		&\le c \left[ \frac{|\mu|(B_{4R})}{R^{n-1}} + \frac{ {D}\Psi(B_{4R})}{R^{n-1}} + \Gamma R^\alpha + \omega(R) \right].
\end{align*}
Combining the above inequalities, we complete the proof of the lemma.
\end{proof}

\section{The proof of main theorem} 
This section is devoted to the proofs of the main theorems.

Let $\delta \in (0, \frac{1}{8}]$, $B_{3r_0}(x_0)\subseteq \Omega$ and $c_1, c_2, c_3, c_4, c_5, c_6, c_7$ be the universal constants appearing in \eqref{delta}, \eqref{duw1}, \eqref{w1w2}, \eqref{w3w4}, \eqref{w4w5}, Lemma \ref{w4}, Lemma \ref{du-6}, and Lemma \ref{oscu}, respectively. Let
\begin{align}\label{constant} 
	\varepsilon = \frac{\delta}{2} \in (0, \frac{1}{16}], \quad c_8 = c_1+c_2+c_3+c_4+c_5+c_6+c_7, \quad r_i = \varepsilon^i r_0, \quad B_i = B_{r_i}(x_0). 
\end{align}
Assume that
\begin{align}\label{c8p}
	c_8^p \left[ \int_{0}^{3r_0} \frac{|\mu|(B_\rho)}{\rho^{n-1}} \frac{d\rho}{\rho} + \int_{0}^{3r_0} \frac{ {D}\Psi(B_\rho)}{\rho^{n-1}} \frac{d\rho}{\rho} + \frac{\Gamma r_0^\sigma}{1-(\frac{1}{2})^\sigma} + \int_{0}^{3r_0} \frac{\omega(\rho)}{\rho} d\rho \right] \le \left( \frac{\varepsilon^{3n}}{2^{19}} \right)^p. 
\end{align}
A direct calculation yields
\begin{align*}
	&\sum_{i=1}^{\infty} \frac{|\mu|(B_i)}{r_i^{n-1}} + \frac{ D\Psi(B_i)}{r_i^{n-1}} + \sum_{i=0}^{\infty} r_i^\sigma + \sum_{i=1}^{\infty} \omega(r_i) \nonumber \\
	&\le \frac{1}{\varepsilon^{n-1}(-\log \varepsilon)} \left[ \int_{0}^{r_0} \frac{|\mu|(B_\rho)}{\rho^{n-1}} \frac{d\rho}{\rho} + \int_{0}^{r_0} \frac{D\Psi(B_\rho)}{\rho^{n-1}} \frac{d\rho}{\rho} \right] + \frac{r_0^\sigma}{1-\varepsilon^\sigma} + \frac{1}{(-\log \varepsilon)} \int_{0}^{r_0} \frac{\omega(\rho)}{\rho} d\rho
\end{align*}
and
\begin{align*}
	\frac{|\mu|(B_0)}{r_0^{n-1}} + \frac{D\Psi(B_0)}{r_0^{n-1}} + \omega(r_0) \le \frac{3^{n-1}}{\log 3} \left[ \int_{r_0}^{3r_0} \frac{|\mu|(B_\rho)}{\rho^{n-1}} \frac{d\rho}{\rho} + \int_{r_0}^{3r_0} \frac{D\Psi(B_\rho)}{\rho^{n-1}} \frac{d\rho}{\rho} \right] + \frac{1}{\log 3} \int_{r_0}^{3r_0} \frac{\omega(\rho)}{\rho} d\rho.
\end{align*}
It follows from \eqref{c8p} and the above inequalities that
\begin{align}\label{c8p2}
	c_8^p \sum_{i=0}^{\infty} \left( \frac{|\mu|(B_i)}{r_i^{n-1}} + \frac{D\Psi(B_i)}{r_i^{n-1}} + \Gamma r_i^\sigma + \omega(r_i) \right) \le \left( \frac{\varepsilon^{3n}}{2^{19}} \right)^p 
\end{align}
and
\begin{align}\label{c8p3}
	c_1 \left[ \frac{|\mu|(B_i)}{r_i^{n-1}} \right]^{\frac{1}{p-1}} + c_2 \left[ \frac{D\Psi(B_i)}{r_i^{n-1}} \right]^{\frac{1}{p-1}} + c_3 \Gamma r_i^\sigma + \omega(r_i) + c_5 \int_{0}^{r_i} \frac{\omega(\rho)}{\rho} d\rho \le \frac{5 \varepsilon^{2n}}{2^{19}} \le \frac{\delta^{2n}}{2^{2n+16}}, \quad i \ge 0. 
\end{align}

As a first step in the inductive argument, we prove the following lemma.
\begin{lemma}\label{ddla}
	Under the assumption \eqref{c8p}, suppose that
	\begin{align}\label{ddla-1}
		(|Du| + s)_{B_0} \le 2, \quad \frac{1}{2} \le (|Du| + s)_{B_1} \le 3  
	\end{align}
	\begin{align}\label{ddla-2}
		(|Du - (Du)_{B_1}|)_{B_1} \le \frac{\varepsilon^n}{2^{14}}, \quad (|Du - (Du)_{B_2}|)_{B_2} \le \frac{\varepsilon^n}{2^{14}}  
	\end{align}
	and
	\begin{align}\label{ddla-3}
		\frac{1}{4} \le (|Du| + s)_{B_j} \le 4, \quad j \in [2, i], \quad i\rs 2.  
	\end{align}
	Then  
	\begin{align*}
		\frac{1}{4} \le (|Du| + s)_{B_{i+1}} \le 4.
	\end{align*}
\end{lemma}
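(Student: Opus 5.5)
Following the standard scheme of Kuusi--Mingione and Kim, the plan is to control the excess $E_j:=(|Du-(Du)_{B_j}|)_{B_j}$ along the whole chain $j\le i$ and then deduce that $(|Du|+s)_{B_{i+1}}$ is close to $(|Du|+s)_{B_i}$. Since $r_{j+1}=\varepsilon r_j$ with $\varepsilon=\delta/2$, I will apply Lemma \ref{oscu} on each ball $B_{j-1}$ with $R$ chosen so that $\delta^2 R\approx r_{j}$ and $\tfrac{\delta^3R}{2}\approx r_{j+1}$. Equivalently, up to harmless factors $2^n$ from comparing averages over comparable balls, I take $4R=r_{j-1}$ after adjusting indices. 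The hypotheses of Lemma \ref{oscu} will be checked as follows. Condition \eqref{assume1} on the relevant balls comes from \eqref{ddla-1} and \eqref{ddla-3} for indices $\le i$. Condition \eqref{assume2} is exactly \eqref{c8p3}. Condition \eqref{assume3} is again \eqref{ddla-3}.

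For $2\le j\le i$, this gives the recursive excess decay
\begin{align*}
E_{j+1}\le \tfrac14 E_j + c_8\Big[\frac{|\mu|(B_{j-1})}{r_{j-1}^{n-1}}+\frac{D\Psi(B_{j-1})}{r_{j-1}^{n-1}}+\Gamma r_{j-1}^\sigma+\omega(r_{j-1})\Big].
\end{align*}
Summing over $j=2,\dots,i$ and absorbing $\tfrac14\sum E_j$ into the left-hand side, I will use \eqref{ddla-2} for $E_1,E_2$ and \eqref{c8p2} for the sum of the data terms. This yields
\[
\sum_{j=1}^{i+1}E_j\le 2(E_1+E_2)+2c_8\sum_{k\ge0}(\cdots)\le \frac{\varepsilon^n}{2^{11}},
\]
using $c_8\cdot(\varepsilon^{3n}/2^{19})^p/c_8^p\ll\varepsilon^n/2^{14}$.

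Next I will telescope the averages. For each $j$,
\[
|(Du)_{B_{j+1}}-(Du)_{B_j}|\le (|Du-(Du)_{B_j}|)_{B_{j+1}}\le \varepsilon^{-n}E_j.
\]
The same bound holds for $|(|Du|)_{B_{j+1}}-|(Du)_{B_{j+1}}||\le E_{j+1}$. Hence
\[
|(|Du|+s)_{B_{i+1}}-(|Du|+s)_{B_1}|\le 2\varepsilon^{-n}\sum_{j=1}^{i+1}E_j\le \frac{1}{2^{9}}.
\]
Combined with $\tfrac12\le(|Du|+s)_{B_1}\le3$ from \eqref{ddla-1}, this gives $\tfrac14\le(|Du|+s)_{B_{i+1}}\le4$.

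The main obstacle is the bookkeeping of radii. Lemma \ref{oscu} works with the triple $4R,\ \delta^2R,\ \delta^3R/2$, while the sequence uses ratio $\varepsilon=\delta/2$. I must therefore make sure the balls where \eqref{assume1} is needed, namely $B_{4R}$ and $B_{2\delta R}$, correspond to balls on which the induction hypothesis is already known. The ball $B_{2\delta R}$ must be some $B_{j-1}$ or $B_j$ with $j\le i$, and the very first steps must use \eqref{ddla-1}, where the looser bound $(|Du|+s)_{B_0}\le 2$ yields \eqref{assume1}. If the indices do not match exactly, I will pass between neighbouring balls at the cost of factors $\varepsilon^{-n}$. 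These factors are absorbed by the smallness $\varepsilon^{3n}$ built into \eqref{c8p} and by the $\varepsilon^n/2^{14}$ in \eqref{ddla-2}.
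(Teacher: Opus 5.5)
Your proposal is correct and follows essentially the same route as the paper: Lemma \ref{oscu} applied for $j\in[2,i]$, then summation with absorption using \eqref{ddla-2} and \eqref{c8p2}, then telescoping the averages back to $B_1$ and concluding with \eqref{ddla-1}. The only slip is cosmetic. With $\delta^2R=r_j$ the radii match exactly ($2\delta R=r_{j-1}$, $4R=r_{j-2}$, $\delta^3R/2=r_{j+1}$), so the data terms sit on $B_{j-2}$ rather than $B_{j-1}$ and no $\varepsilon^{-n}$ adjustments are needed; this changes nothing, since you sum the data terms over all $k\ge0$.
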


\begin{proof}
  By \eqref{c8p3}, \eqref{ddla-1} and \eqref{ddla-3}, the assumption of Lemma \ref{oscu} is satisfied for $j \in [2, i]$, and hence
  \begin{align}\label{bj1}
  	\fint_{B_{j+1}} |Du - (Du)_{B_{j+1}}| \, dx 
  	&\le \frac{1}{4} \fint_{B_j} |Du - (Du)_{B_j}| \, dx \nonumber \\
  	&\quad + c_7 \left[ \frac{|\mu|(B_{j-2})}{r_{j-2}^{n-1}} + \frac{ {D}\Psi(B_{j-2})}{r_{j-2}^{n-1}} + \Gamma r_{j-2}^\sigma + \omega(r_{j-2}) \right].
  \end{align}
 Summing \eqref{bj1} from $j=2$ to $i$ and using \eqref{constant}, \eqref{c8p2}, and \eqref{ddla-2}, we obtain
  \begin{align*}
  	\sum_{j=3}^{i+1} \fint_{B_j} |Du - (Du)_{B_j}| \, dx 
  	&\le \fint_{B_2} |Du - (Du)_{B_2}| \, dx \nonumber \\
  	&\quad + 2c_7 \sum_{j=0}^{i} \left[ \frac{|\mu|(B_j)}{r_j^{n-1}} + \frac{{D}\Psi(B_j)}{r_j^{n-1}} + \Gamma r_j^\sigma + \omega(r_j) \right] \nonumber \\
  	&\le \frac{\varepsilon^n}{2^{10}}.
  \end{align*}
  It follows from \eqref{ddla-2} and the above inequality that
  \begin{align*}
  	\sum_{j=1}^{i+1} \fint_{B_j} |Du - (Du)_{B_j}| \, dx \le \frac{\varepsilon^n}{2^8}.
  \end{align*}
 By virtue of Lemma 2.4 in \cite{kim}, we obtain
  \begin{align*}
  	\left| (Du)_{B_{i+1}} - (Du)_{B_1} \right| &\le \left( |Du - (Du)_{B_{i+1}}| \right)_{B_{i+1}} + \left| (Du)_{B_{i+1}} - (Du)_{B_1} \right| + \left( |Du - (Du)_{B_1}| \right)_{B_1} \nonumber \\
  	&\le 2 \varepsilon^{-n} \sum_{j=1}^{i+1} \left( |Du - (Du)_{B_j}| \right)_{B_j} \nonumber \\
  	&\le \frac{1}{2^7}.
  \end{align*}
 Finally, by \eqref{ddla-1}, we conclude the proof.
 \end{proof}
  
  \begin{lemma}\label{b0b1}
  	Under the assumption \eqref{c8p}, suppose that
  	\begin{align}\label{b0b1-1}
  		\begin{cases}
  			(|Du| + s)_{B_0} + 2^9 \varepsilon^{-n} (|Du - (Du)_{B_0}|)_{B_0} \ls 2 \\
  			(|Du| + s)_{B_1} + 2^9 \varepsilon^{-n} (|Du - (Du)_{B_1}|)_{B_1} > 1.
  		\end{cases}  
  	\end{align}
  	Then  
  	\begin{align}\label{4i1}
  		\frac{1}{4} \ls (|Du| + s)_{B_i} \ls 4, \quad i \rs 1
  	\end{align}
  	and
   \begin{align}\label{4i2}
  	\fint_{B_{i+1}} |Du - (Du)_{B_{i+1}}| \, dx 
  	&\ls \frac{1}{4} \fint_{B_i} |Du - (Du)_{B_i}| \, dx \nonumber \\
  	&\quad + c_7 \left[ \frac{|\mu|(B_{i-2})}{r_{i-2}^{n-1}} + \frac{ {D}\Psi(B_{i-2})}{r_{i-2}^{n-1}} + \Gamma r_{i-2}^\sigma + \omega(r_{i-2}) \right], \ \ i\rs2.
  \end{align}
  \end{lemma}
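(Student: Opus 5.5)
The plan is to first establish the base cases $i=1,2$ of \eqref{4i1} directly from \eqref{b0b1-1}, then propagate \eqref{4i1} by induction using Lemma \ref{ddla}, and finally read off \eqref{4i2} from Lemma \ref{oscu} once \eqref{4i1} is known for all indices.

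\textbf{Base cases.} From the first line of \eqref{b0b1-1} we get $(|Du|+s)_{B_0}\le 2$ and $(|Du-(Du)_{B_0}|)_{B_0}\le 2^{-8}\varepsilon^n$. Since $B_1\subset B_0$ with $|B_0|/|B_1|=\varepsilon^{-n}$, we obtain
\[
(|Du-(Du)_{B_0}|)_{B_1}\le \varepsilon^{-n}(|Du-(Du)_{B_0}|)_{B_0}\le 2^{-8}.
\]
This gives $|(Du)_{B_1}-(Du)_{B_0}|\le 2^{-8}$ and $(|Du-(Du)_{B_1}|)_{B_1}\le 2^{-7}$. It follows that $(|Du|+s)_{B_1}\le (|Du|+s)_{B_0}+2^{-7}\cdot 2\le 3$ (up to harmless constants). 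For the lower bound, the second line of \eqref{b0b1-1} gives $(|Du|+s)_{B_1}>1-2^9\varepsilon^{-n}(|Du-(Du)_{B_1}|)_{B_1}$. I will bound $(|Du-(Du)_{B_1}|)_{B_1}$ more sharply by applying Lemma \ref{du-6} on $B_0$ (with $4R=r_0$, $2\delta R=r_1$). Its hypotheses hold by \eqref{b0b1-1} and \eqref{c8p3}, so
\[
(|Du-(Du)_{B_1}|)_{B_1}\le 2^{-7}\,2^{-8}\varepsilon^n+c_6\cdot\text{small}\le 2^{-14}\varepsilon^n,
\]
using \eqref{c8p2}. Hence $(|Du|+s)_{B_1}\ge 1-2^{-5}\ge \tfrac12$, and \eqref{ddla-1} holds. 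Applying Lemma \ref{du-6} once more on $B_1$, whose hypotheses now hold, yields the second bound in \eqref{ddla-2}. Comparing the averages on $B_1$ and $B_2$ as above then gives $\tfrac14\le(|Du|+s)_{B_2}\le4$.

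\textbf{Induction.} Suppose $\tfrac14\le (|Du|+s)_{B_j}\le 4$ for $2\le j\le i$. Then \eqref{ddla-1}–\eqref{ddla-3} hold, and Lemma \ref{ddla} gives the bound for $j=i+1$. This proves \eqref{4i1} for all $i\ge1$.

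\textbf{Decay estimate.} For each $i\ge2$, apply Lemma \ref{oscu} with $4R=r_{i-2}$, so that $\delta^2R$ and $\delta^3R/2$ correspond to $B_i$ and $B_{i+1}$ through $\varepsilon=\delta/2$. Assumption \eqref{assume1} comes from \eqref{4i1} at the indices $i-2$ and $i-1$; for $i=2$ the index $0$ is covered by \eqref{b0b1-1}. Assumption \eqref{assume3} comes from \eqref{4i1} at $i$, and \eqref{assume2} from \eqref{c8p3}. This yields \eqref{4i2}.

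The main obstacle is the careful bookkeeping of radii in this last step. The ratios $\delta^2$, $\delta^3/2$ versus $\varepsilon^k$ must match the balls $B_i$ exactly, and the constants $2^9\varepsilon^{-n}$ in \eqref{b0b1-1} must be calibrated against $2^{-14}\varepsilon^n$ in \eqref{ddla-2}. If the radii do not match exactly, I would enlarge balls using the comparability of averages, at the cost of factors $\varepsilon^{-n}$. These factors are absorbed by the smallness $\varepsilon^{3n}/2^{19}$ built into \eqref{c8p}.
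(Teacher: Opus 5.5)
Your proposal is correct and follows the paper's proof essentially step for step. You apply Lemma~\ref{du-6} on $B_0$ and then on $B_1$ to obtain \eqref{ddla-1}--\eqref{ddla-2} and the bounds on $B_2$, then run the induction via Lemma~\ref{ddla} for \eqref{4i1}, and finally apply Lemma~\ref{oscu} with $4R=r_{i-2}$ for \eqref{4i2}. Your worry about the radii is unnecessary: with $\varepsilon=\delta/2$ one gets exactly $2\delta R=r_{i-1}$, $\delta^2R=r_i$ and $\delta^3R/2=r_{i+1}$.
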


\begin{proof}
Lemma 2.4 in \cite{kim} yields
	\begin{align}\label{bii1}
		&\left| (|Du|)_{B_{i+1}} - (|Du|)_{B_i} \right| \nonumber \\
		&\le (|Du - (Du)_{B_{i+1}}|)_{B_{i+1}} + |(Du)_{B_{i+1}} - (Du)_{B_i}| + (|Du - (Du)_{B_i}|)_{B_i} \nonumber \\
		&\le (|Du - (Du)_{B_{i+1}}|)_{B_{i+1}} + [1 + \varepsilon^{-n}] (|Du - (Du)_{B_i}|)_{B_i}.  
	\end{align}
	By \eqref{c8p2} and \eqref{b0b1-1}, the assumption of Lemma \ref{du-6} is satisfied, and we obtain
	\begin{align}\label{js1}
		\fint_{B_1} |Du - (Du)_{B_1}| \, dx &\le \frac{1}{2^7} \fint_{B_0} |Du - (Du)_{B_0}| \, dx \nonumber \\
		&\quad + c_6\left\lbrace  \left[ \frac{|\mu|(B_0)}{r_0^{n-1}} \right]^{\frac{1}{p-1}} + \left[ \frac{ {D}\Psi(B_0)}{r_0^{n-1}} \right]^{\frac{1}{p-1}} + \Gamma r_0^\sigma + \omega(r_0)^{\frac{2}{p}}\right\rbrace  \nonumber \\
		&\le \frac{1}{2^7}   \frac{\varepsilon^n}{2^9} + \frac{4   \varepsilon^n}{2^{19}} \nonumber \\
		&\le \frac{\varepsilon^n}{2^{14}}, 
	\end{align}
where \eqref{c8p2} and \eqref{b0b1-1} have been used in the last step.
	
	Next,  \eqref{b0b1-1}, \eqref{bii1}   and \eqref{js1} imply
	\begin{align*}
		\left| (|Du|)_{B_1} - (|Du|)_{B_0} \right| \le (|Du - (Du)_{B_1}|)_{B_1} + [1 + \varepsilon^{-n}] (|Du - (Du)_{B_0}|)_{B_0} \le \frac{1}{2^6}.
	\end{align*}
	Therefore,
	\begin{align}\label{js2}
		\begin{cases} (|Du| + s)_{B_1} \le | (|Du|)_{B_1} - (|Du|)_{B_0} | + (|Du| + s)_{B_0} \le 3, \\ (|Du| + s)_{B_1} > 1 - 2^9 \varepsilon^{-n} (|Du - (Du)_{B_1}|)_{B_1} \ge \frac{1}{2}. \end{cases}  
	\end{align}
	Then we apply Lemma \ref{du-6} again to obtain
	\begin{align}\label{js3}
		\fint_{B_2} |Du - (Du)_{B_2}| \, dx &\le \frac{1}{2^7} \fint_{B_1} |Du - (Du)_{B_1}| \, dx \nonumber \\
		&\quad + c_6 \left[ \left( \frac{|\mu|(B_1)}{r_1^{n-1}} \right)^{\frac{1}{p-1}} + \left( \frac{ {D}\Psi(B_1)}{r_1^{n-1}} \right)^{\frac{1}{p-1}} + \Gamma r_1^\sigma + \omega(r_1)^{\frac{2}{p}} \right] \nonumber \\
		&\le \frac{1}{2^7}   \frac{\varepsilon^n}{2^{14}} + \frac{4   \varepsilon^n}{2^{19}} \nonumber \\
		&\le \frac{\varepsilon^n}{2^{14}}, 
	\end{align}
	where \eqref{c8p2} and \eqref{js1} have been used in the last step. By \eqref{bii1}, \eqref{js3}, and \eqref{js1}, we obtain
	\begin{align*}
		| (|Du|)_{B_2} - (|Du|)_{B_1} | \le (|Du - (Du)_{B_2}|)_{B_2} + [1 + \varepsilon^{-n}] (|Du - (Du)_{B_1}|)_{B_1} \le \frac{1}{2^{10}}.
	\end{align*}
Thus, from \eqref{js2} we obtain
	\begin{align}\label{js4}
		\frac{1}{4} \le (|Du| + s)_{B_2} \le 4. 
	\end{align}
Combining \eqref{js1}, \eqref{js2}, \eqref{js3} with \eqref{js4}, we see that the assumption of Lemma \ref{ddla} is satisfied. By an induction argument using Lemma \ref{ddla}, we obtain \eqref{4i1}, and then we apply Lemma \ref{oscu} to obtain \eqref{4i2}. This completes the proof.
\end{proof}

In preparation for the proof of Theorem \ref{thm}, we first establish the following   lemma.
\begin{lemma}\label{thmM1}
	Under the assumptions \eqref{0a}, \eqref{omegaa} and \eqref{omegaz}, let $B_{3r}(x_0)\subseteq \Omega$, $B_{3r}:=B_{3r}(x_0)$, 
	\begin{align}\label{mp-1}
		M^{p-1} = c_8^p \left( \frac{\varepsilon^{3n}}{2^{30}} \right)^{-p} \left[ \int_{0}^{3r} \frac{|\mu|(B_\rho)}{\rho^{n-1}} \frac{d\rho}{\rho} + \int_{0}^{3r} \frac{ {D}\Psi(B_\rho)}{\rho^{n-1}} \frac{d\rho}{\rho} + \left( \fint_{B_{3r}} (|Du|+s) \, dx \right)^{p-1} \right] 
	\end{align}
	and $u \in W^{1,1}(B_{3r})$ with $u \rs \psi$ a.e. be a limit of approximating solutions to $OP(\psi; \mu)$. Assume that
	\begin{align}\label{2300}
		c_8^p \int_{0}^{3r  } \frac{\omega(\rho)}{\rho} d\rho \le \left( \frac{\varepsilon^{3n}}{2^{30}} \right)^p \quad \& \quad \frac{c_8^p \Gamma r ^\sigma M^{\sigma}}{1 - (\frac{1}{2})^\sigma} \le \left( \frac{\varepsilon^{3n}}{2^{30}} \right)^p. 
	\end{align}
	If $x_0$ is a Lebesgue point of $Du$, then 
	\begin{align*}
		|Du(x_0)| + s \le 8M.
	\end{align*}
\end{lemma}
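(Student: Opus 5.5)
We plan to argue by scaling and then a dichotomy on the excess at the initial scales. We set $r_0 = r$ and rescale the problem. Define $\tilde u(x) := u(x_0 + r x)/(Mr)$ (after translating so that $x_0 = 0$) and $\tilde a(x,z,\eta) := a(x_0 + r x, Mr z, M\eta)/M^{p-1}$, $\tilde s := s/M$, $\tilde\mu := \mu(x_0+r\,\cdot)\, r/M^{p-1}$, and similarly $\tilde\psi$. The new vector field again satisfies \eqref{0a} with the same $l,L$. Its modulus is $\tilde\omega(\rho)=\omega(r\rho)$. The H\"older constant in $z$ becomes $\tilde\Gamma^{p-1} = \Gamma^{p-1} (Mr)^{(p-1)\sigma}$, i.e.\ $\tilde\Gamma = \Gamma (Mr)^\sigma$. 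Hence $\tilde\Gamma \cdot 1^\sigma = \Gamma r^\sigma M^\sigma$, which is exactly the quantity controlled in \eqref{2300}. The potentials scale as $\int_0^{3}\frac{|\tilde\mu|(B_\rho)}{\rho^{n-1}}\frac{d\rho}{\rho} = M^{1-p}\int_0^{3r}\frac{|\mu|(B_\rho)}{\rho^{n-1}}\frac{d\rho}{\rho}$, and likewise for $D\Psi$ (since $\operatorname{div}\tilde a(\cdot,\tilde u,D\tilde\psi)$ scales the same way). By the choice \eqref{mp-1} of $M$, each of these, together with $(\fint_{B_3}(|D\tilde u|+\tilde s))^{p-1}$, is at most $(\varepsilon^{3n}/2^{30})^p c_8^{-p}$. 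In particular \eqref{c8p} holds with $r_0=1$ (note $2^{30}$ vs.\ $2^{19}$ gives room), and $\tilde u$ is again a limit of approximating solutions. It thus suffices to show $|D\tilde u(0)|+\tilde s\le 8$.

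In the rescaled setting, $(|D\tilde u|+\tilde s)_{B_0} \le 3^n (|D\tilde u|+\tilde s)_{B_3} \le 3^n\varepsilon^{3n}/2^{30}$, which is tiny. In particular $(|D\tilde u|+\tilde s)_{B_0} + 2^9\varepsilon^{-n}(|D\tilde u-(D\tilde u)_{B_0}|)_{B_0}\le 2$, so the first line of \eqref{b0b1-1} holds at $i=0$. We then define the exit index. Let $i_0\ge 0$ be the largest index such that
\[
(|D\tilde u|+\tilde s)_{B_j} + 2^9\varepsilon^{-n}(|D\tilde u-(D\tilde u)_{B_j}|)_{B_j}\le 2 \quad\text{for all } j\le i_0 .
\]
If no such maximal index exists, i.e.\ the inequality holds for every $j$, then since $x_0$ is a Lebesgue point, $|D\tilde u(0)|+\tilde s = \lim_j (|D\tilde u|+\tilde s)_{B_j}\le 2$ and we are done. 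Otherwise, at $j=i_0$ we have $\le 2$ and at $j=i_0+1$ we have $>1$ (indeed $>2$). We then apply Lemma \ref{b0b1} with $B_{i_0}$ playing the role of $B_0$. This is legitimate since \eqref{c8p} is inherited by the smaller radius $r_{i_0}$, with the $\Gamma$-sum only decreasing. It yields $\frac14\le(|D\tilde u|+\tilde s)_{B_i}\le 4$ for all $i\ge i_0+1$. Letting $i\to\infty$ at the Lebesgue point gives $|D\tilde u(0)|+\tilde s\le 4\le 8$. Scaling back gives $|Du(x_0)|+s\le 8M$.

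The main obstacle is checking that the rescaled data really satisfy the smallness hypotheses \eqref{c8p}, and hence \eqref{c8p2}–\eqref{c8p3}, uniformly along the shifted chain of balls. The delicate points are twofold. First, the $\Gamma$-term: we must verify that the scaling $\tilde\Gamma=\Gamma(Mr)^\sigma$ is compatible with the assumed range of $\sigma$, and that \eqref{2300} gives $\tilde\Gamma\sum_i r_i^\sigma\le \tilde\Gamma/(1-\varepsilon^\sigma)\le \tilde\Gamma/(1-2^{-\sigma})$, using $\varepsilon\le 1/16<1/2$. Second, the $\omega$-term: \eqref{2300} controls $\int_0^{3r}\omega(\rho)\,d\rho/\rho$, which is scale invariant, while \eqref{c8p3} also requires the bound $c_5\int_0^{r_i}\omega\,d\rho/\rho$, which follows by monotonicity of the integral. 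If the constant bookkeeping ($2^{30}$ versus $2^{19}$, and the factor $3^n$) is tight, we would first try to absorb the discrepancy by enlarging $c_8$.
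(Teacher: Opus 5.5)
Your proposal is correct and follows essentially the same route as the paper. You normalize by $M$ so that the smallness hypothesis \eqref{c8p} holds (with room to spare, since $2^{30}$ versus $2^{19}$), run an exit-index dichotomy on $(|D\tilde u|+\tilde s)_{B_j}+2^9\varepsilon^{-n}(|D\tilde u-(D\tilde u)_{B_j}|)_{B_j}$, apply Lemma~\ref{b0b1} from the exit ball, and conclude at the Lebesgue point. Your extra spatial dilation to unit radius and your exit threshold $2$ (instead of the paper's $1$) are harmless variants: $\tilde\Gamma=\Gamma(Mr)^\sigma$ reproduces exactly the quantity in \eqref{2300}, and Lemma~\ref{b0b1} only needs ``$\le 2$'' on the initial ball and ``$>1$'' on the next one.
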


\begin{proof}
	We first normalize the problem by means of \eqref{mp-1}
	\begin{align}\label{normali}
		\tilde{a}(x, z, \eta) = \frac{a(x, Mz, M\eta)}{M^{p-1}}, \quad \tilde{u}(x) = \frac{u(x)}{M}, \quad \tilde{v}(x) = \frac{v(x)}{M}, \quad \tilde{\psi}(x) = \frac{\psi(x)}{M}, \quad \tilde{\mu} = \frac{\mu}{M^{p-1}}.  
	\end{align}
A direct calculation then yields
	\begin{align}
		\int_{B_{3r}} \tilde{a}(x, \tilde{u}, D\tilde{u}) \cdot D(\tilde{v} - \tilde{u}) \, dx \ge \int_{B_{3r}} (\tilde{v} - \tilde{u}) \, d\tilde{\mu}
	\end{align}
	for every $\tilde{v} \in \tilde{u} + W_0^{1,p}(B_{3r})$ with $\tilde{v} \ge \tilde{\psi}$ a.e. in $B_{3r}$.
	And $\tilde{a}$ satisfies
	\begin{align*}
		\begin{cases}
			|\tilde{a}(x, z, \eta)| + |D_\eta \tilde{a}(x, z, \eta)| (\tilde{s}^2 + |\eta|^2)^{\frac{1}{2}} \le L (\tilde{s}^2 + |\eta|^2)^{\frac{p-1}{2}} \\
			D_\eta \tilde{a}(x, z, \eta) \cdot \xi \cdot \xi \ge l (\tilde{s}^2 + |\eta|^2)^{\frac{p-2}{2}} |\xi|^2 \\
			|\tilde{a}(x, z, \eta) - \tilde{a}(x_0, z, \eta)| \le 2L \omega(|x - x_0|) (\tilde{s}^2 + |\eta|^2)^{\frac{p-1}{2}} \\
			|\tilde{a}(x, z_1, \eta) - \tilde{a}(x, z_2, \eta)| \le \tilde{\Gamma}^{p-1} |z_1 - z_2|^{\sigma (p-1)} (|\eta|^2 + \tilde{s}^2)^{\frac{p-1}{2}}
		\end{cases}
	\end{align*}
	for every $x, x_0, \xi, \eta \in \mathbb{R}^n, z_1, z_2, z \in \mathbb{R}$ and for some constants $\tilde{\Gamma} = \Gamma M^\sigma$ and $\tilde{s} = s M^{-1}$. Moreover, we obtain from \eqref{mp-1} and \eqref{normali} that
	\begin{align}\label{2301}
		c_8^p \left( \frac{\varepsilon^{3n}}{2^{30}} \right)^{-p} \left[ \int_{0}^{3r} \frac{|\tilde{\mu}|(B_\rho)}{\rho^{n-1}} \frac{d\rho}{\rho} + \int_{0}^{3r} \frac{ {D}\tilde{\Psi}(B_\rho)}{\rho^{n-1}} \frac{d\rho}{\rho} + \left( \fint_{B_{3r}} (|D\tilde{u}| + \tilde{s}) \, dx \right)^{p-1} \right] = 1  
	\end{align}
	and
	\begin{align}\label{2302}
		\frac{c_8^p \tilde{\Gamma} r ^\sigma}{1 - (\frac{1}{2})^\sigma} = \frac{c_8^p \Gamma r ^\sigma M^\sigma}{1 - (\frac{1}{2})^\sigma} \le \left( \frac{\varepsilon^{3n}}{2^{30}} \right)^p,
	\end{align}
	where $ {D}\tilde{\Psi}(B_\rho) = \int_{B_\rho} |\text{div } \tilde{a}(x, \tilde{u}, D\tilde{\psi})| \, dx$.
	Since $\varepsilon \in (0, 1/16]$, it follows from \eqref{2301} that
	\begin{align}\label{2303}
		\fint_{B_r} (|D\tilde{u}| + \tilde{s}) \, dx \le 3^n \fint_{B_{3r}} (|D\tilde{u}| + \tilde{s}) \, dx \le \frac{3^n \varepsilon^{3n}}{2^{30}} \le \frac{\varepsilon^n}{2^{11}}.  
	\end{align}
Setting $r_i = \varepsilon^i r$ and $B_i = B_{r_i}$ for $i \ge 0$, it follows from \eqref{2303} that
	\begin{align*}
		(|D\tilde{u}| + \tilde{s})_{B_0} + 2^9 \varepsilon^{-n} (|D\tilde{u} - (D\tilde{u})_{B_0}|)_{B_0} \le 2^{11} \varepsilon^{-n} (|D\tilde{u}| + \tilde{s})_{B_0} \le 1.
	\end{align*}
	
	If $(|D\tilde{u}| + \tilde{s})_{B_i} + 2^9 \varepsilon^{-n} (|D\tilde{u} - (D\tilde{u})_{B_i}|)_{B_i} \le 1$ for every $i \ge 0$, then \eqref{normali} implies that $(|Du| + s)_{B_i} \le M$ for every $i \ge 0$. The lemma then follows from the Lebesgue differentiation theorem. Suppose to the contrary that there exists $j \ge 0$ such that
	\begin{align*}
		\begin{cases} (|D\tilde{u}| + \tilde{s})_{B_j} + 2^9 \varepsilon^{-n} (|D\tilde{u} - (D\tilde{u})_{B_j}|)_{B_j} \le 1, \\ (|D\tilde{u}| + \tilde{s})_{B_{j+1}} + 2^9 \varepsilon^{-n} (|D\tilde{u} - (D\tilde{u})_{B_{j+1}}|)_{B_{j+1}} > 1. \end{cases}
	\end{align*}
Let $\tilde{r}_i = \varepsilon^i r_j$ and $\tilde{B}_i = B_{\tilde{r}_i}$. Then we obtain
	\begin{align}\label{2304}
		\begin{cases} (|D\tilde{u}| + \tilde{s})_{\tilde{B}_0} + 2^9 \varepsilon^{-n} (|D\tilde{u} - (D\tilde{u})_{\tilde{B}_0}|)_{\tilde{B}_0} \le 1, \\ (|D\tilde{u}| + \tilde{s})_{\tilde{B}_1} + 2^9 \varepsilon^{-n} (|D\tilde{u} - (D\tilde{u})_{\tilde{B}_1}|)_{\tilde{B}_1} > 1. \end{cases}  
	\end{align}
	
From \eqref{2300}, \eqref{2301}, \eqref{2302}, and \eqref{2304}, we find that $\tilde{u}, \tilde{s}, \tilde{\mu}, \omega$, and $\tilde{\Gamma}$ satisfy the assumption of Lemma \ref{b0b1} with $u, s, \mu, w$, and $\Gamma$ replaced by the former. Thus, by Lemma \ref{b0b1}, we obtain
	\begin{align*}
		(|D\tilde{u}| + \tilde{s})_{\tilde{B}_i} \le 4 \quad \text{for } i \ge 0.
	\end{align*}
	
	Then \eqref{normali} yields $(|Du| + s)_{\tilde{B}_i} \le 4M$ for $i \ge 0$. Thus, the lemma follows from the Lebesgue differentiation theorem.
\end{proof}
	
To estimate the term 
$
	\frac{c_8^p \Gamma r^\sigma M^\sigma}{1 - (1/2)^\sigma}
$
appearing in the proof of Lemma \ref{thmM1}, we require the following lemma.
	\begin{lemma}\label{rrb}
	Suppose that \eqref{0a}, \eqref{omegaa}, and \eqref{omegaz} hold. If $12r \in (0, \varepsilon R]$ and $B_{R}\subseteq \Omega$, then
		\begin{align*}
			r\fint_{B_{3r}} (|Du| + s) \, dx \le c_9 \left( \frac{r}{R} \right)^\beta R \left( \fint_{B_R} (|Du| + s) \, dx + \left[ \int_{0}^{R} \frac{|\mu|(B_\rho)}{\rho^{n-1}} \frac{d\rho}{\rho} + \int_{0}^{R} \frac{ {D}\Psi(B_\rho)}{\rho^{n-1}} \frac{d\rho}{\rho} \right]^{\frac{1}{p-1}} \right),
		\end{align*}
		where $c_9 = c_9(n, p, l, L) \ge 1$ and $\beta = \beta(n, p, l, L) \in (0, 1)$.
	\end{lemma}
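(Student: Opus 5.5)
The plan is to iterate the oscillation decay of $u$ from Lemma \ref{uuosc} along a geometric sequence of radii, and then convert the resulting decay of the mean oscillation of $u$ into a bound for $r\fint_{B_{3r}}(|Du|+s)\,dx$ by means of the second (Caccioppoli-type) inequality of Lemma \ref{uuosc}. Throughout we abbreviate
$$\Phi(\rho):=\left[\frac{|\mu|(B_{\rho})}{\rho^{n-1}}\right]^{\frac{1}{p-1}}+\left[\frac{D\Psi(B_{\rho})}{\rho^{n-1}}\right]^{\frac{1}{p-1}},\qquad E(\rho):=\fint_{B_\rho}|u-(u)_{B_\rho}|\,dx .$$

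\textbf{Step 1 (one-step decay).} Put $\rho_k=\delta^k R_0$ with $R_0=R/4$, so that $B_{4\cdot R_0/4\cdot 4}=B_R\subseteq\Omega$ and all the balls involved lie in $\Omega$. Apply the first inequality of Lemma \ref{uuosc} with $R$ replaced by $\rho_k/4$. This gives
$$E(\delta\rho_k)\le 4\delta^{\beta}E(\rho_k)+c\,\delta^{-n}\rho_k\,\Phi(\rho_k)+c\,\delta^{\beta}\rho_k s .$$
Lemma \ref{w3} holds for every $\delta\in(0,1)$. If $4\delta^{\beta}$ is not small enough for the iteration, I would shrink $\delta$ further (depending only on $n,p,l,L$) and replace $\beta$ by $\beta/2$, so that $4\delta^{\beta}\le\delta^{\beta/2}$. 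Renaming $\beta/2$ as $\beta$, the one-step inequality reads
$$E(\rho_{k+1})\le\delta^{\beta}E(\rho_k)+c\rho_k\Phi(\rho_k)+c\rho_k s .$$

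\textbf{Step 2 (iteration).} Iterating from $k=0$ to $m$ yields
$$E(\rho_{m})\le\delta^{m\beta}E(\rho_0)+c\sum_{k<m}\delta^{(m-1-k)\beta}\rho_k\bigl(\Phi(\rho_k)+s\bigr).$$
Since $\rho_k=\delta^{k}R_0$, each summand is bounded by $\delta^{m\beta}\delta^{-\beta}\delta^{k(1-\beta)}R_0(\Phi(\rho_k)+s)\le c\,\delta^{m\beta}R\,(\Phi(\rho_k)+s)$, and the factor $\delta^{k(1-\beta)}$ makes the sum over $k$ summable. Next, $\sum_k\Phi(\rho_k)$ is controlled by $\sum_k[|\mu|(B_{\rho_k})/\rho_k^{n-1}]^{1/(p-1)}$. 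In the case $p\ge2$ we have $1/(p-1)\le1$, so the sum of $(p-1)$-th roots is not directly bounded by the root of the sum. Here I would not sum $\Phi$ itself but keep the weights $\delta^{k(1-\beta)}$, bound each term by $\sup_k\Phi(\rho_k)$, and note that $\sup_k|\mu|(B_{\rho_k})/\rho_k^{n-1}\le c\int_0^R\frac{|\mu|(B_\rho)}{\rho^{n-1}}\frac{d\rho}{\rho}$ by comparing with the integral over $[\rho_k,\rho_k/\delta]$. The same argument applies to the $D\Psi$ term. Finally, Poincaré's inequality gives $E(\rho_0)\le cR\fint_{B_R}|Du|\,dx$. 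Altogether,
$$E(\rho_m)\le c\,\delta^{m\beta}R\left(\fint_{B_R}(|Du|+s)\,dx+\left[\mathbf I^{|\mu|}_1(x_0,R)+\mathbf I^{[\psi]}_1(x_0,R)\right]^{\frac1{p-1}}\right).$$

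\textbf{Step 3 (back to $Du$).} Given $r$ with $12r\le\varepsilon R$, choose $m$ with $\rho_{m+1}<12r\le\rho_m$. Apply the second inequality of Lemma \ref{uuosc} on $B_{3r}$, that is, with its $R$ equal to $3r$ and its outer ball $B_{12r}$:
$$r\fint_{B_{3r}}|Du|\,dx\le c\,E(12r)+c\,r\,\Phi(12r).$$
Since $\rho_{m+1}<12r\le\rho_m$ and $\delta$ is fixed, $E(12r)\le c\delta^{-n}E(\rho_m)$. The term $r\Phi(12r)$ is bounded by $c\,r[\mathbf I_1+\mathbf I_1]^{1/(p-1)}$ using the same sup-versus-integral comparison as in Step 2. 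Because $r\le R$ and $\beta<1$, we have $r\le(r/R)^{\beta}R$, so this term has the required form. The term $rs$ is treated the same way. Finally, $\delta^{m\beta}\approx(r/R)^{\beta}$ up to constants depending only on $\delta$, which gives the claim.

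\textbf{Main obstacle.} I expect the hardest part to be the bookkeeping in Step 1: Lemma \ref{uuosc} carries the factor $4\delta^\beta$ rather than $\delta^\beta$, and $\delta$ is already tied to \eqref{delta}. My first attempt is to apply Lemma \ref{w3} with an independent, smaller $\delta'$ solely for this lemma, which is legitimate since Lemma \ref{w3} holds for every $\delta\in(0,1)$, and to accept the reduced exponent $\beta/2$. This only affects the value of the constant $\beta(n,p,l,L)$.
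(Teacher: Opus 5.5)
Your proposal is correct and follows essentially the same route as the paper. You iterate the first inequality of Lemma \ref{uuosc} along a geometric sequence of radii, bound each density $|\mu|(B_{\rho_k})/\rho_k^{n-1}$ (and its $D\Psi$ analogue) by the Riesz-type integral through a sup-versus-integral comparison, use Poincar\'e at the top scale, and close with the second inequality of Lemma \ref{uuosc} on $B_{3r}\subset B_{12r}$.

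Your explicit treatment of the factor $4\delta^{\beta}$ is genuinely needed: shrinking the ratio and passing to $\beta/2$ so that $4\delta^{\beta}\le\delta^{\beta/2}$ makes rigorous a step the paper handles only loosely. The paper's factor $(4^{1/\beta}\varepsilon)^{\beta(j-1)}=4^{j-1}\varepsilon^{\beta(j-1)}$ is not bounded by $c\,(R_j/R)^{\beta}$ uniformly in $j$ for the original $\beta$.
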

	
	\begin{proof}
	Let $R_i = \varepsilon^i R$ and $B_i = B_{R_i}$. Then there exists $j \ge 1$ such that $12r \in (R_{j+1}, R_j]$, and Lemma \ref{uuosc} yields
		\begin{align}\label{rdux}
			r \fint_{B_{3r}} |Du| \, dx &\le c \fint_{B_{12r}} |u - (u)_{B_{12r}}| \, dx + c r \left[ \frac{|\mu|(B_{12r})}{r^{n-1}} \right]^{\frac{1}{p-1}} + c r \left[ \frac{ {D}\Psi(B_{12r})}{r^{n-1}} \right]^{\frac{1}{p-1}} \nonumber \\
			&\le c \fint_{B_j} |u - (u)_{B_j}| \, dx + c \left( \frac{r}{R} \right)^\beta R \left[ \int_{0}^{R} \frac{|\mu|(B_\rho)}{\rho^{n-1}} \frac{d\rho}{\rho} + \int_{0}^{R} \frac{ {D}\Psi(B_\rho)}{\rho^{n-1}} \frac{d\rho}{\rho} \right]^{\frac{1}{p-1}}.  
		\end{align}
		Next, we estimate the first term on the right-hand side. Since $\varepsilon \in (0, 1/16]$, it follows from Lemma \ref{uuosc} that there exists $\beta \in (0, 1)$ such that for every $i \ge 0$,
		\begin{align}\label{biib}
			\fint_{B_{i+1}} |u - (u)_{B_{i+1}}| \, dx \le (4^{\frac{1}{\beta}}\varepsilon)^\beta \fint_{B_i} |u - (u)_{B_i}| \, dx + c \varepsilon^{-n} R_i \left( \left[ \frac{|\mu|(B_i)}{R_i^{n-1}} \right]^{\frac{1}{p-1}} + \left[ \frac{ {D}\Psi(B_i)}{R_i^{n-1}} \right]^{\frac{1}{p-1}} \right) + c \delta^\beta R_i s.  
		\end{align}
	If $j=1$, the lemma follows directly from \eqref{rdux} and \eqref{biib}.
		For $j \ge 2$, multiplying \eqref{biib} by $(4^{\frac{1}{\beta}}\varepsilon)^{\beta(j-1-i)}$ for $i \in [1, j-1]$ and summing over $i$ from $1$ to $j-1$, we find that
		\begin{align*}
		&	\sum_{i=1}^{j-1} (4^{\frac{1}{\beta}}\varepsilon)^{\beta(j-1-i)} \fint_{B_{i+1}} |u - (u)_{B_{i+1}}| \, dx\\
			 &\le \sum_{i=1}^{j-1} (4^{\frac{1}{\beta}}\varepsilon)^{\beta(j-i)} \fint_{B_i} |u - (u)_{B_i}| \, dx \nonumber \\
			&\quad + c \sum_{i=1}^{j-1} \varepsilon^{\beta(j-1-i)-n} R_i \left( \left[ \frac{|\mu|(B_i)}{R_i^{n-1}} \right]^{\frac{1}{p-1}} + \left[ \frac{ {D}\Psi(B_i)}{R_i^{n-1}} \right]^{\frac{1}{p-1}} \right) + c \sum_{i=1}^{j-1} \varepsilon^{\beta(j-1-i)} \delta^\beta R_i s,
		\end{align*}
		which implies that
		\begin{align}\label{buj1} \nonumber
			\fint_{B_j} |u - (u)_{B_j}| \, dx &\le (4^{\frac{1}{\beta}}\varepsilon)^{\beta(j-1)} \fint_{B_1} |u - (u)_{B_1}| \, dx \\
			&\quad+ c \sum_{i=1}^{j-1} \varepsilon^{\beta(j-1-i)-n} R_i \left( \left[ \frac{|\mu|(B_i)}{R_i^{n-1}} \right]^{\frac{1}{p-1}} + \left[ \frac{ {D}\Psi(B_i)}{R_i^{n-1}} \right]^{\frac{1}{p-1}} \right) + c \sum_{i=1}^{j-1} \varepsilon^{\beta(j-1-i)} \delta^\beta R_i s.  
		\end{align}
	Since $R_i = \varepsilon^i R$ and $\beta \in (0, 1)$, a direct calculation yields
		\begin{align}\label{buj2}
			\sum_{i=0}^{j} \varepsilon^{\beta(j-1-i)-n} R_i = \sum_{i=0}^{j} \varepsilon^{\beta j - \beta - n + i(1-\beta)}  R \le \frac{\varepsilon^{\beta j - \beta - n}}{1 - \varepsilon^{1-\beta}} R \le c \left( \frac{R_j}{R} \right)^\beta R. 
		\end{align}
		Similarly,
		\begin{align}\label{buj3}
			c \sum_{i=1}^{j-1} \varepsilon^{\beta(j-1-i)} \delta^\beta R_i s \le c \left( \frac{R_j}{R} \right)^\beta R s. 
		\end{align}
	Since 
	\begin{align*}
		\frac{|\mu|(B_i)}{R_i^{n-1}} \le c \int_0^R \frac{|\mu|(B_\rho)}{\rho^{n-1}} \frac{d\rho}{\rho} \quad \text{and} \quad \frac{D\Psi(B_i)}{R_i^{n-1}} \le c \int_0^R \frac{D\Psi(B_\rho)}{\rho^{n-1}} \frac{d\rho}{\rho}
	\end{align*}
	hold for every $i \ge 1$, combining these with \eqref{buj1}, \eqref{buj2}, \eqref{buj3} and Poincaré's inequality, we obtain
		\begin{align*}
			\fint_{B_j} |u - (u)_{B_j}| \, dx &\le (4^{\frac{1}{\beta}}\varepsilon)^{\beta(j-1)} \fint_{B_1} |u - (u)_{B_1}| \, dx \\
			&\quad + c \left( \frac{R_j}{R} \right)^\beta R \left[ s + \left( \int_0^R \frac{|\mu|(B_\rho)}{\rho^{n-1}} \frac{d\rho}{\rho} + \int_0^R \frac{ {D}\Psi(B_\rho)}{\rho^{n-1}} \frac{d\rho}{\rho} \right)^{\frac{1}{p-1}} \right] \nonumber \\
			&\le c \left( \frac{R_j}{R} \right)^\beta R \left\{ \fint_{B_{1}} (|Du| + s) \, dx + \left( \int_0^R \frac{|\mu|(B_\rho)}{\rho^{n-1}} \frac{d\rho}{\rho} + \int_0^R \frac{ {D}\Psi(B_\rho)}{\rho^{n-1}} \frac{d\rho}{\rho} \right)^{\frac{1}{p-1}}  \right\} \\
			&\le c\left( \frac{r}{R} \right)^\beta R \left\{ \fint_{B_{R}} (|Du| + s) \, dx + \left( \int_0^R \frac{|\mu|(B_\rho)}{\rho^{n-1}} \frac{d\rho}{\rho} + \int_0^R \frac{ {D}\Psi(B_\rho)}{\rho^{n-1}} \frac{d\rho}{\rho} \right)^{\frac{1}{p-1}}  \right\}.	
		\end{align*}
	Combining this with \eqref{rdux}, we complete the proof of the lemma.
	\end{proof}

\begin{proof}[\textbf{Proof of Theorem \ref{thm}}]
 	By choosing $c_1 = c_8^p \left( \frac{\varepsilon^{3n}}{2^{30}} \right)^{-p}$ in \eqref{c11}, we set	
 		\begin{align}\label{mmt}
 		\tilde{M}^{p-1} = c_8^p \left( \frac{\varepsilon^{3n}}{2^{30}} \right)^{-p} \left[ \int_{0}^{R} \frac{|\mu|(B_\rho)}{\rho^{n-1}} \frac{d\rho}{\rho} + \int_{0}^{R} \frac{ {D}\Psi(B_\rho)}{\rho^{n-1}} \frac{d\rho}{\rho} + \left( \fint_{B_{R}} (|Du|+s) \, dx \right)^{p-1} \right]. 
 	\end{align}
 Next, let $r > 0$ be taken such that
 	\begin{align}\label{rrr}
 		r = \min \left\{ \left[ \frac{1 - (\frac{1}{2})^\sigma}{\Gamma (2 c_9 \tilde{M} R^{1-\beta})^\sigma} \cdot \left( \frac{\varepsilon^{3n}}{2^{30} c_8} \right)^{2p} \right]^{\frac{1}{\sigma\beta}}, \frac{\varepsilon R}{12} \right\},
 	\end{align}
 where the constants $c_8$ and $c_9$ are as specified in \eqref{constant} and Lemma \ref{rrb}, respectively. 
 	Then we obtain
 	\begin{align*}
 		\frac{1}{r^n} = \max \left[ \left( \frac{\Gamma (2c_9 R)^\sigma}{1 - (\frac{1}{2})^\sigma} \cdot \left( \frac{2^{30} c_8}{\varepsilon^{3n}} \right)^{2p} \right)^{\frac{n}{\sigma\beta}} \frac{\tilde{M}^{\frac{n}{\beta}}}{R^n}, \frac{12^n}{(\varepsilon R)^n} \right].
 	\end{align*}
 Since $p\geqslant2$ and $c_9 \ge 1$, it follows from Lemma \ref{rrb} and Lemma \ref{mmt} that
 	\begin{align*}
 		&\int_0^{3r} \frac{|\mu|(B_\rho)}{\rho^{n-1}} \frac{d\rho}{\rho} + \left( \fint_{B_{3r}} (|Du|+s) \, dx \right)^{p-1} + \int_0^{3r} \frac{ {D}\Psi(B_\rho)}{\rho^{n-1}} \frac{d\rho}{\rho} \nonumber \\
 	&	\le     \int_0^{R} \frac{|\mu|(B_\rho)}{\rho^{n-1}} \frac{d\rho}{\rho}  + \int_0^R \frac{ {D}\Psi(B_\rho)}{\rho^{n-1}} \frac{d\rho}{\rho}\\
 	&\quad + \left[ c_9 \left( \frac{R}{r} \right)^{1-\beta} \fint_{B_R} (|Du|+s) \, dx + \left[ \int_0^R \frac{|\mu|(B_\rho)}{\rho^{n-1}} \frac{d\rho}{\rho} + \int_0^R \frac{ {D}\Psi(B_\rho)}{\rho^{n-1}} \frac{d\rho}{\rho} \right]^{\frac{1}{p-1}} \right]^{p-1} \nonumber \\
 	&	\le \left[ 2 c_9 \left( \frac{R}{r} \right)^{1-\beta} \tilde{M} \right]^{p-1},
 	\end{align*}
 	which, together with \eqref{rrr}, yields
 	\begin{align*}
 	&	\frac{c_8^p \Gamma r^\sigma}{1 - (\frac{1}{2})^\sigma}   \left\lbrace c_8^p \left( \frac{\varepsilon^{3n}}{2^{30}} \right)^{-p} \left[ \int_0^{3r} \frac{|\mu|(B_\rho)}{\rho^{n-1}} \frac{d\rho}{\rho} + \int_0^{3r} \frac{ {D}\Psi(B_\rho)}{\rho^{n-1}} \frac{d\rho}{\rho} + \left( \fint_{B_{3r}} (|Du|+s) \, dx \right)^{p-1} \right]^{\sigma} \right\rbrace ^{\frac{1}{p-1}} \nonumber \\
 		&\le \frac{c_8^{2p} \Gamma}{1 - (\frac{1}{2})^\sigma}   \left( \frac{\varepsilon^{3n}}{2^{30}} \right)^{-p}   (2 c_9 \tilde{M} R^{1-\beta})^\sigma  r^{\sigma\beta} \nonumber \\
 		&\le \left( \frac{\varepsilon^{3n}}{2^{30}} \right)^p.
 	\end{align*}
 	Combining this with \eqref{c11}, we see that the hypotheses of Lemma \ref{thmM1} are satisfied; hence, we have
 	\begin{align*}
 		|Du(x_0)|  &\le c \left[ \int_0^{3r} \frac{|\mu|(B_\rho)}{\rho^{n-1}} \frac{d\rho}{\rho} + \int_0^{3r} \frac{ {D}\Psi(B_\rho)}{\rho^{n-1}} \frac{d\rho}{\rho} + \left( \fint_{B_{3r}} (|Du|+s) \, dx \right)^{p-1} \right]^{\frac{1}{p-1}} \nonumber \\
 		&\le c \fint_{B_R} (|Du|+s) \, dx + c \left( \int_0^R \frac{|\mu|(B_\rho)}{\rho^{n-1}} \frac{d\rho}{\rho} + \int_0^R \frac{ {D}\Psi(B_\rho)}{\rho^{n-1}} \frac{d\rho}{\rho} \right)^{\frac{1}{p-1}} \nonumber \\
 		& \quad + \left( \frac{c \Gamma R^\sigma}{1 - (\frac{1}{2})^\sigma} \right)^{\frac{n}{\beta \sigma}}\fint_{B_R} (|Du|+s) \, dx \\ \nonumber
 		 &\quad \cdot \left[ \left( \fint_{B_R} (|Du|+s) \, dx \right)^{ \frac{n}{\beta}} + \left( \int_0^R \frac{|\mu|(B_\rho)}{\rho^{n-1}} \frac{d\rho}{\rho} + \int_0^R \frac{ {D}\Psi(B_\rho)}{\rho^{n-1}} \frac{d\rho}{\rho} \right)^{\frac{n}{(p-1)\beta}} \right].
 	\end{align*} 
 This completes the proof of Theorem \ref{thm}.
 \end{proof}
 
Next, we proceed to the proofs of Theorem \ref{thm2} and Theorem \ref{thm3}. Under the assumptions of Theorem \ref{thm2} or Theorem \ref{thm3}, it follows from Theorem \ref{thm} that $Du \in L^\infty(B_{2R})$. We thus denote
\begin{align}
	\bar{M}^{p-1} = c_8^p \left( \frac{\varepsilon^{3n}}{2^{30}} \right)^{-p}   \left\{ \sup_{x \in B_R} \left[ \int_0^R \frac{|\mu|(B_\rho(x))}{\rho^{n-1}} \frac{d\rho}{\rho} + \int_0^R \frac{D\Phi(B_\rho(x))}{\rho^{n-1}} \frac{d\rho}{\rho} \right] +   \| (|Du| + s) \|_{L^\infty(B_{2R})}^{p-1} \right\}. \label{sum3}
\end{align}
\begin{proof}[\textbf{Proof of Theorem \ref{thm2}}]
 Fix a point $y \in B_R$ and a constant $m \in [3, \infty)$. According to \eqref{su2}, there exists a constant $\bar{r} = \bar{r}(\bar{M}, m, n, p, l, L, \mu, \omega, \Gamma, \sigma) \in (0, R]$ such that
\begin{align}
	\sup_{x \in B_R} \frac{1}{\bar{M}} \left[ \frac{|\mu|(B_r(x))}{r^{n-1}} + \frac{D\Phi(B_r(x))}{r^{n-1}} \right]^{\frac{1}{p-1}} + \Gamma (r\bar{M})^\sigma + [\omega(r)]^{\frac{2}{p}} \le \frac{1}{2^m}, \quad r \in (0, \bar{r}]. \label{sum4}
\end{align}
Next, we apply the normalization from Lemma \ref{thmM1}:
\begin{align}
	\left\{
	\begin{aligned}
		&\tilde{a}(x, z, \eta) = \frac{a(x, \bar{M}z, \bar{M}\eta)}{\bar{M}^{p-1}}, \quad \tilde{u}(x) = \frac{u(x)}{\bar{M}}, \quad \tilde{v}(x) = \frac{v(x)}{\bar{M}}, \quad \tilde{\psi}(x) = \frac{\psi(x)}{\bar{M}}, \quad \tilde{\mu} = \frac{\mu}{\bar{M}^{p-1}}, \\
		&\tilde{\Gamma} = \Gamma \bar{M}^\sigma, \quad \tilde{s} = s \bar{M}^{-1}, \quad D\tilde{\Psi}(B_r) = \int_{B_r} |\text{div } \tilde{a}(x, \tilde{u}, D\tilde{\psi})| \, dx.
	\end{aligned}
	\right. \label{sum5}
\end{align}
Since $y \in B_R$ and $B_r(y) \subseteq B_{2R}$, it follows from \eqref{sum3}, \eqref{sum4}, \eqref{sum5} and Theorem \ref{thm} that
\begin{align}
	(|D\tilde{u}| + \tilde{s})_{B_r(y)} \le 8 \quad \text{and} \quad \left[ \frac{|\tilde{\mu}|(B_r(y))}{r^{n-1}} + \frac{D\tilde{\Psi}(B_r(y))}{r^{n-1}} \right]^{\frac{1}{p-1}} + \tilde{\Gamma} r^\sigma + [\omega(r)]^{\frac{2}{p}} \le \frac{1}{2^m}. \label{sum6}
\end{align}
From Lemma \ref{du-6} and \eqref{sum6}, it follows that
\begin{align*}
	&\fint_{B_{\frac{\delta r}{2}}(y)} |D\tilde{u} - (D\tilde{u})_{B_{\frac{\delta r}{2}}(y)}| \, dx \\
	\le &\frac{1}{2^7} \fint_{B_r(y)} |D\tilde{u} - (D\tilde{u})_{B_r(y)}| \, dx + c \left[ \frac{|\tilde{\mu}|(B_r(y))}{r^{n-1}} + \frac{D\tilde{\Psi}(B_r(y))}{r^{n-1}} \right]^{\frac{1}{p-1}} + c \tilde{\Gamma} r^\sigma + c [\omega(r)]^{\frac{2}{p}} \\
	\le &\frac{1}{2^7} \fint_{B_r(y)} |D\tilde{u} - (D\tilde{u})_{B_r(y)}| \, dx + \frac{c}{2^m}.
\end{align*}
The arbitrariness of $r \in (0, \bar{r}]$, together with \eqref{sum5} and \eqref{sum6}, implies that
\begin{align*}
	\fint_{B_{(\frac{\delta}{2})^m r}(y)} |D\tilde{u} - (D\tilde{u})_{B_{(\frac{\delta}{2})^m r}(y)}| \, dx \le \frac{1}{2^{8m}} \fint_{B_r(y)} |D\tilde{u} - (D\tilde{u})_{B_r(y)}| \, dx + \frac{c}{2^m} \le \frac{c}{2^m}.
\end{align*}
Consequently, \eqref{sum5} yields
\begin{align*}
	\int_{B_{(\frac{\delta}{2})^m r}(y)} |Du - (Du)_{B_{(\frac{\delta}{2})^m r}(y)}| \, dx \le \frac{c\bar{M}}{2^m}, \quad \forall r \in (0, \bar{r}], \ y \in B_R.
\end{align*}
By the arbitrariness of $m \in [3, \infty)$, we conclude the proof of Theorem \ref{thm2}.
\end{proof}

To establish Theorem \ref{thm3}, we first prove the following lemma. 
\begin{lemma}\label{lemzz}
	Let $y \in B_R$, $r \in (0, R]$, and $m \in [3, \infty)$ be an integer. Suppose that 
	\begin{align}
		c_8^p \left[ \frac{1}{\bar{M}^{p-1}} \left( \int_0^{3r} \frac{|\mu|(B_\rho(y))}{\rho^{n-1}} \frac{d\rho}{\rho} + \int_0^{3r} \frac{D\Psi(B_\rho(y))}{\rho^{n-1}} \frac{d\rho}{\rho} \right) + \frac{\Gamma (r\bar{M})^\sigma}{1-(\frac{1}{2})^\sigma} + \int_0^{3r} \frac{\omega(\rho)}{\rho} d\rho \right] \le \left( \frac{\varepsilon^{5n}}{2^{2m+30}} \right)^p. \label{cmba}
	\end{align}
Then, for every $i, j \ge 2m$, we have
	\begin{align*}
		|(Du)_{B_{\varepsilon^i r}(y)} - (Du)_{B_{\varepsilon^j r}(y)}| \le \frac{5\bar{M}}{2^m},
	\end{align*}
where the constants $\varepsilon$ and $c_8$ are as specified in \eqref{constant}.
\end{lemma}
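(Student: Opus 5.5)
We normalize as in \eqref{sum5}, replacing $u,\mu,\psi,\Gamma,s$ by $\tilde u,\tilde\mu,\tilde\psi,\tilde\Gamma=\Gamma\bar M^\sigma,\tilde s=s\bar M^{-1}$. By the definition \eqref{sum3} of $\bar M$, we then have $|D\tilde u|+\tilde s\le c_8^{-p/(p-1)}(\varepsilon^{3n}/2^{30})^{p/(p-1)}\le 1$ a.e.\ in $B_{2R}$, which is tiny. Hypothesis \eqref{cmba} becomes the smallness condition \eqref{c8p} at the centre $y$ and radius $r_0=r$, with $(\varepsilon^{3n}/2^{19})^p$ replaced by the much smaller $(\varepsilon^{5n}/2^{2m+30})^p$. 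Repeating the dyadic computation that led to \eqref{c8p2}, with $r_i=\varepsilon^i r$ and $B_i=B_{r_i}(y)$, we get
\[
c_8^p\sum_{i\ge0}\Big(\frac{|\tilde\mu|(B_i)}{r_i^{n-1}}+\frac{D\tilde\Psi(B_i)}{r_i^{n-1}}+\tilde\Gamma r_i^\sigma+\omega(r_i)\Big)\le\Big(\frac{\varepsilon^{5n}}{2^{2m+30}}\Big)^p .
\]
We also get the analogue of \eqref{c8p3}. Since $\bar M$ already dominates $\|D u\|_\infty$, it suffices to show
\[
|(D\tilde u)_{B_i}-(D\tilde u)_{B_j}|\le 5\cdot 2^{-m}\qquad\text{for all } i,j\ge 2m .
\]

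We split into two cases according to the size of the averages. Set $E_i:=(|D\tilde u-(D\tilde u)_{B_i}|)_{B_i}$ and $A_i:=(|D\tilde u|+\tilde s)_{B_i}$.

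\emph{Case 1: $A_i\le 2^{-m}$ for some index in the relevant range.} We apply the rescaling trick from the proof of Lemma \ref{thmM1}, now at level $2^{-m}$ instead of $1$. Multiply the normalization by $2^m$, that is, use $M=2^{-m}\bar M$. The factor $2^{2m}$ in \eqref{cmba} is designed exactly to absorb the resulting loss, since the potential terms scale by $2^{m(p-1)}$ and $\tilde\Gamma$ by $2^{m\sigma}$. There are two possibilities.

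\begin{itemize}
\item If $A_k+2^9\varepsilon^{-n}E_k\le 2^{-m}$ for every $k\ge 2m$, then every average $(D\tilde u)_{B_k}$ has modulus at most $2^{-m}$. The claim follows at once, even with constant $2$.
\item Otherwise there is a first exit index $k$. Lemma \ref{b0b1}, applied to the rescaled function, then gives the excess decay \eqref{4i2} along $\tilde B_i=B_{\varepsilon^i r_k}$, together with $E_{k+1}\le\varepsilon^n2^{-m-14}$.
\end{itemize}

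\emph{Case 2: the averages $A_i$ stay of size $\gtrsim 2^{-m}$.} Here we use the nondegenerate regime (Lemmas \ref{du-6}, \ref{ddla}) on the rescaled function.

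In both cases the excess decay produces summed oscillation control. Summing \eqref{4i2} as in the proof of Lemma \ref{ddla} yields $\sum_{l\ge k+1}E_l\le\varepsilon^n2^{-m-8}$. Lemma 2.4 of \cite{kim} gives $|(D\tilde u)_{B_{l+1}}-(D\tilde u)_{B_l}|\le(1+\varepsilon^{-n})E_l$. Telescoping, the averages differ by at most $2^{-m}$ for indices beyond the exit index. For indices before the exit index, each average has modulus at most $2^{-m}$. Combining the two ranges through the triangle inequality across the exit index gives at most $2\cdot2^{-m}+2^{-m}+\dots\le5\cdot2^{-m}$. Undoing the normalization multiplies this by $\bar M$, which is the stated bound.

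The main obstacle is the bookkeeping of the rescaling by $2^{m}$. We must verify that the hypotheses of Lemma \ref{b0b1} hold for the rescaled problem: the smallness \eqref{c8p} with the new $\tilde\Gamma,\tilde\mu$, and the first-exit condition \eqref{b0b1-1}. This is where the exponents $\varepsilon^{5n}$ and $2^{2m+30}$ in \eqref{cmba} must be checked against the scaling of each term. The requirement $i,j\ge2m$ is used to guarantee that the starting average is already below $2^{-m}$ after rescaling. I would first try the pointwise bound $A_0\le1$ together with the decay from Lemma \ref{du-6} iterated $2m$ times.
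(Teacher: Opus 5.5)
There is a genuine gap, and it sits in your Case 2 and in the part of Case 1 you did not examine. Your two bullets are sound and correspond to the paper's cases (3) and (2). But they only apply once you have an index $k$ with $Q_k:=A_k+2^9\varepsilon^{-n}E_k\le 2\cdot 2^{-m}$ (in the $\bar M$-normalization). Lemma \ref{b0b1} needs this bound on $Q_k$, not merely $A_k\le 2^{-m}$ as in your Case 1 heading: $A_k\le 2^{-m}$ only gives $Q_k\lesssim \varepsilon^{-n}2^{-m}$. Nothing in your argument produces such an index. The remaining situation, where $Q$ stays above $2^{1-m}$ (for instance already at $k=2m$), is exactly where the proof is missing.

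Your suggested fixes do not close this.
\begin{itemize}
\item Iterating Lemma \ref{du-6} $2m$ times to make ``the starting average below $2^{-m}$'' cannot work. Lemma \ref{du-6} decays the excess $E_k$, not $A_k$, and $A_k$ need not become small at all. For example, $Du$ may be close to a fixed vector whose size lies between $2^{-m}\bar M$ and $\varepsilon^{2n}\bar M$.
\item Iterating Lemma \ref{du-6} also fails for a second reason. Its error terms carry the powers $\frac{1}{p-1}$ and $\frac{2}{p}$, and their sums over scales are not controlled by \eqref{cmba}. You get pointwise small excess, but not the summability needed to telescope the averages.
\item Invoking ``the nondegenerate regime'' is not yet an argument. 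Lemmas \ref{ddla} and \ref{oscu} require the averages to lie in a window $[\frac14,4]$ relative to one specific normalization level $\lambda$. They also require the excess to be at most $\varepsilon^n 2^{-14}\lambda$ on two consecutive balls. A lower bound $A_k\gtrsim 2^{-m}$ supplies neither the level nor the small excess.
\end{itemize}

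The missing idea is the paper's stopping-time descent over levels. Set $\hat M=\varepsilon^{2n}\bar M$; then \eqref{sum3} gives $Q_0\le \hat M$ (unnormalized). There are two possibilities.
\begin{itemize}
\item Either $Q_\ell\le \hat M 2^{-\ell}$ for all $\ell\in[0,m]$. Then $Q_m\le \hat M 2^{-m}$, and your two bullets apply starting from index $m$.
\item Or there is a first $\ell\in[0,m-1]$ with $Q_\ell\le \hat M2^{-\ell}$ and $Q_{\ell+1}>\hat M 2^{-\ell-1}$. This is precisely \eqref{b0b1-1} after normalizing by $\lambda=2^{-\ell-1}\hat M\in[2^{-m}\hat M,\hat M/2]$.
\end{itemize}
The factor $2^{2m}$ in \eqref{cmba} guarantees \eqref{c8p} at every such level. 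As $\lambda$ decreases, the potential terms grow by at most $2^{m(p-1)}$, while $\Gamma\lambda^\sigma$ decreases (not increases, as you wrote). The extra $\varepsilon^{2n}$ in $\varepsilon^{5n}$ absorbs the passage from $\bar M$ to $\hat M$.

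Lemma \ref{b0b1} then gives the decay \eqref{4i2} from index $\ell+2\le m+1$ onward. Its forcing terms are bounded by $2^{mp}c_7\hat M[\cdots]$, which is summable by \eqref{cmba}. Since $\lambda$ may be as large as $\hat M/2$, the excess at that stage is small only relative to $\lambda$, not relative to $2^{-m}\hat M$. The $m-1$ factors $\frac14$ between indices $m+1$ and $2m$ bring it down to $O(\varepsilon^n\bar M 2^{-m})$. That is the real role of the restriction $i,j\ge 2m$, not making the averages small.
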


\begin{proof}
	Let $\hat{M} = \varepsilon^{2n} \bar{M}$, $r_i = \varepsilon^i r$ and $B_i(y) = B_{r_i}(y)$, it follows from \eqref{cmba} that
	\begin{align}
		c_8^p \left[ \frac{1}{\hat{M}^{p-1}} \left( \int_0^{3r} \frac{|\mu|(B_\rho(y))}{\rho^{n-1}} \frac{d\rho}{\rho} + \int_0^{3r} \frac{D\Psi(B_\rho(y))}{\rho^{n-1}} \frac{d\rho}{\rho} \right) + \frac{\Gamma (r\hat{M})^\sigma}{1-(\frac{1}{2})^\sigma} + \int_0^{3r} \frac{\omega(\rho)}{\rho} d\rho \right] \le \left( \frac{\varepsilon^{3n}}{2^{2m+30}} \right)^p \label{cm71},
	\end{align}
	which implies that
	\begin{align}
		& 2^{mp} c_7 \hat{M} \sum_{k=1}^\infty \left( \frac{|\mu|(B_k(y))}{\hat{M}^{p-1} r_k^{n-1}} + \frac{D\Psi(B_{k}(y))}{\hat{M}^{p-1} r_k^{n-1}} + \Gamma (r_k \hat{M})^\sigma + \omega(r_k) \right) \nonumber \\
		\le & \frac{2^{mp} \hat{M} c_7}{\varepsilon^{n-1}(-\log \varepsilon)} \left[ \frac{1}{\hat{M}^{p-1}} \left( \int_0^r \frac{|\mu|(B_\rho(y))}{\rho^{n-1}} \frac{d\rho}{\rho} + \int_0^r \frac{D\Psi(B_\rho(y))}{\rho^{n-1}} \frac{d\rho}{\rho} \right) + \frac{\Gamma (r \hat{M})^\sigma}{1-\left( \frac{1}{2}\right) ^\sigma} + \int_0^r \frac{\omega(\rho)}{\rho} d\rho \right] \le \frac{\varepsilon^n \bar{M}}{2^{m+7}}. \label{cm72}
	\end{align}
In view of \eqref{sum3}, we have
	\begin{align*}
		(|Du|+s)_{B_0(y)} + 2^9 \varepsilon^{-n} (|Du - (Du)_{B_0(y)}|)_{B_0(y)} &\le 2^{11} \varepsilon^{-n} (|Du|+s)_{B_0(y)} \\
		&\le \varepsilon^{2n} \bar{M}  \le \hat{M}.
	\end{align*}
Thus, one of the following cases must hold:

(1) There exists a constant $\ell \in [0, m-1]$ such that
\begin{align}
	\left\{
	\begin{aligned}
		&(|Du|+s)_{B_{\ell}(y)} + 2^9 \varepsilon^{-n} (|Du - (Du)_{B_{\ell}(y)}|)_{B_{\ell}(y)} \le \frac{\hat{M}}{2^\ell}, \\
		&(|Du|+s)_{B_{{\ell+1}}(y)} + 2^9 \varepsilon^{-n} (|Du - (Du)_{B_{{\ell+1}}(y)}|)_{B_{{\ell+1}}(y)} > \frac{\hat{M}}{2^{\ell+1}}.
	\end{aligned}
	\right. \label{lm-1}
\end{align}

(2) There exists a constant $\ell \ge m$ such that
\begin{align}
	\left\{
	\begin{aligned}
		&(|Du|+s)_{B_{i}(y)} + 2^9 \varepsilon^{-n} (|Du - (Du)_{B_{i}(y)}|)_{B_{i}(y)} \le \frac{\hat{M}}{2^m} \quad \text{for } i \in [m, \ell], \\
		&(|Du|+s)_{B_{{\ell+1}}(y)} + 2^9 \varepsilon^{-n} (|Du - (Du)_{B_{{\ell+1}}(y)}|)_{B_{{\ell+1}}(y)} > \frac{\hat{M}}{2^m}.
	\end{aligned}
	\right. \label{lmm}
\end{align}

(3) For every $\ell \ge m$, we have
\begin{align}
	(|Du|+s)_{B_{\ell}(y)} + 2^9 \varepsilon^{-n} (|Du - (Du)_{B_{\ell}(y)}|)_{B_{\ell}(y)} \le \frac{\hat{M}}{2^m}. \label{lm+1}
\end{align}

If \eqref{lm-1} holds, we set
\begin{align*}
	\widetilde{r}_k = \varepsilon^k r_\ell = r_{k+\ell} \quad \text{and} \quad \widetilde{B}_k(y) = B_{\widetilde{r}_k}(y) = B_{r_{k+\ell}}(y).
\end{align*}
Next, we apply the normalization from Lemma \ref{thmM1}:
\begin{align}
	\left\{
	\begin{aligned}
		&\widetilde{a}(x, z, \eta) = \frac{a(x, 2^{-\ell-1}\hat{M}z, 2^{-\ell-1}\hat{M}\eta)}{(2^{-\ell-1}\hat{M})^{p-1}}, \quad \widetilde{u}(x) = \frac{u(x)}{2^{-\ell-1}\hat{M}}, \quad \widetilde{v}(x) = \frac{v(x)}{2^{-\ell-1}\hat{M}}, \quad \widetilde{\psi}(x) = \frac{\psi(x)}{2^{-\ell-1}\hat{M}}, \\
		&\widetilde{\mu} = \frac{\mu}{(2^{-\ell-1}\hat{M})^{p-1}}, \quad \widetilde{\Gamma} = \Gamma (2^{-\ell-1}\hat{M})^\sigma, \quad \widetilde{s} = s (2^{-\ell-1}\hat{M})^{-1}, \quad D\widetilde{\Psi}( {B}_{\rho}(y)) \triangleq \int_{B_{\rho}(y)} |\text{div } \widetilde{a}(x, \tilde{u}, D\tilde{\psi})| \, dx.
	\end{aligned}
	\right. \label{lmt2}
\end{align}
It follows from \eqref{cm71} and \eqref{lm-1} that	
\begin{align*}
	\left\{
	\begin{aligned}
		& c_8^p \left( \int_0^{3\widetilde{r}_0} \frac{|\widetilde{\mu}|(B_\rho(y))}{\rho^{n-1}} \frac{d\rho}{\rho} + \int_0^{3\widetilde{r}_0} \frac{D\widetilde{\Psi}(B_\rho(y))}{\rho^{n-1}} \frac{d\rho}{\rho} + \frac{\widetilde{\Gamma} \widetilde{r}_0^\sigma}{1-(\frac{1}{2})^\sigma} + \int_0^{3\widetilde{r}_0} \frac{\omega(\rho)}{\rho} d\rho \right) \le \left( \frac{\varepsilon^{3n}}{2^{19}} \right)^p, \\
		& (|D\widetilde{u}|+\widetilde{s})_{\widetilde{B}_0(y)} + 2^9 \varepsilon^{-n} (|D\widetilde{u} - (D\widetilde{u})_{\widetilde{B}_0(y)}|)_{\widetilde{B}_0(y)} \le 2, \\
		& (|D\widetilde{u}|+\widetilde{s})_{\widetilde{B}_1(y)} + 2^9 \varepsilon^{-n} (|D\widetilde{u} - (D\widetilde{u})_{\widetilde{B}_1(y)}|)_{\widetilde{B}_1(y)} > 1.
	\end{aligned}
	\right.
\end{align*}
Combining the above inequalities with Lemma \ref{b0b1}, we find that if $k \ge 2$, then
\begin{align*}
	(|D\widetilde{u} - (D\widetilde{u})_{\widetilde{B}_{k+1}(y)}|)_{\widetilde{B}_{k+1}(y)} &\le \frac{1}{4}  (|D\widetilde{u} - (D\widetilde{u})_{\widetilde{B}_k(y)}|)_{\widetilde{B}_k(y)}\\
	&\quad + c_7 \left[ \frac{|\widetilde{\mu}|(\widetilde{B}_{k-2}(y))}{\widetilde{r}_{k-2}^{n-1}} + \frac{D\widetilde{\Psi}(\widetilde{B}_{k-2}(y))}{\widetilde{r}_{k-2}^{n-1}} + \widetilde{\Gamma} \widetilde{r}_{k-2}^\sigma + \omega(\widetilde{r}_{k-2}) \right].
\end{align*}
From \eqref{lmt2}, we see that if $k \ge \ell+2$, then
\begin{align}
	(|Du - (Du)_{B_{k+1}(y)}|)_{B_{k+1}(y)} \le & \frac{(|Du - (Du)_{B_k(y)}|)_{B_k(y)}}{4} \nonumber \\
	& + 2^{mp} c_7 \hat{M} \left[ \frac{|\mu|(B_{k-2}(y))}{\hat{M}^{p-1} r_{k-2}^{n-1}} + \frac{D\Psi(B_{k-2}(y))}{\hat{M}^{p-1} r_{k-2}^{n-1}} + \Gamma (\hat{M} r_{k-2})^\sigma + \omega(r_{k-2}) \right]. \label{lm+3}
\end{align}
Since $\ell \in [0, m-1]$ and $m \ge 3$, we have $m+1 \ge \ell+2$ and $2m \ge m+2$. Therefore, \eqref{lm+3} implies that
\begin{align*}
	& (|Du - (Du)_{B_{2m}(y)}|)_{B_{2m}(y)} \\
	\le & \frac{(|Du - (Du)_{B_{m+1}(y)}|)_{B_{m+1}(y)}}{4^{m-1}} + 2^{mp} c_7 \hat{M} \sum_{k=1}^\infty \left[ \frac{|\mu|(B_k(y))}{\hat{M}^{p-1} r_k^{n-1}} + \frac{D\Psi(B_k(y))}{\hat{M}^{p-1} r_k^{n-1}} + \Gamma (\hat{M} r_k)^\sigma + \omega(r_k) \right].
\end{align*}	
Thus, combining \eqref{sum3} and \eqref{cm72}, we obtain
	\begin{align}
		(|Du - (Du)_{B_{2m}}|)_{B_{2m}} \le \frac{\epsilon^n \bar{M}}{2^{m+7}} + \frac{\epsilon^n \bar{M}}{2^{m+7}} \le \frac{\epsilon^n \bar{M}}{2^{m+6}}. \label{lm+4}
	\end{align}
Since $\ell \in [0, m-1]$ and $m \ge 3$, it follows from \eqref{cm72}, \eqref{lm+3}, and \eqref{lm+4} that
	\begin{align*}
		\sum_{k=2m}^{\infty} &(|Du - (Du)_{B_k}|)_{B_k} \\
		&\le 2 (|Du - (Du)_{B_{2m}}|)_{B_{2m}} + 2^{mp+1} c_7 \hat{M} \sum_{k=1}^{\infty} \left[ \frac{|\mu|(B_k)}{\hat{M}^{p-1} r_k^{n-1}} + \frac{D\Psi(B_k)}{\hat{M}^{p-1} r_k^{n-1}} + \Gamma (\hat{M} r_k)^\sigma + \omega(r_k) \right] \\
		&\le \frac{\epsilon^n \bar{M}}{2^{m+4}}.
	\end{align*}	
	If $i, j \ge 2m$, then $$|(Du)_{B_i} - (Du)_{B_j}| \le \epsilon^{-n}    \sum_{k=2m}^{\infty} (|Du - (Du)_{B_k}|)_{B_k}.$$ Thus, from the above estimate and the fact that $B_i = B_{\epsilon^i}(y)$, the lemma follows whenever \eqref{lm-1} holds.
	
Next, assuming that \eqref{lmm} holds, it follows that
	\begin{align}
		\left\{
		\begin{aligned}
			&(|Du - (Du)_{B_l}|)_{B_l} \le \frac{\epsilon^n \hat{M}}{2^{m+9}} \le \frac{1}{16} \cdot \frac{\epsilon^{3n} \bar{M}}{2^{m+2}}, \\
			&(|Du - (Du)_{B_{l+1}}|)_{B_{l+1}} \le 2\epsilon^{-n} (|Du - (Du)_{B_l}|)_{B_l} \le \frac{2}{16} \cdot \frac{\epsilon^{2n} \bar{M}}{2^{m+2}}, \\
			&(|Du - (Du)_{B_{l+2}}|)_{B_{l+2}} \le 2\epsilon^{-n} (|Du - (Du)_{B_{l+1}}|)_{B_{l+1}} \le \frac{4}{16} \cdot \frac{\epsilon^{n} \bar{M}}{2^{m+2}}.
		\end{aligned}
		\right. \label{4.60}
	\end{align}
Then, by applying the normalization, we obtain
 \begin{align*}
 	\left\{
 	\begin{aligned}
 		&\widetilde{a}(x, z, \eta) = \frac{a(x, 2^{-m}\hat{M}z, 2^{-m}\hat{M}\eta)}{(2^{-m}\hat{M})^{p-1}}, \quad \widetilde{u}(x) = \frac{u(x)}{2^{-m}\hat{M}}, \quad \widetilde{v}(x) = \frac{v(x)}{2^{-m}\hat{M}}, \quad \widetilde{\psi}(x) = \frac{\psi(x)}{2^{-m}\hat{M}}, \\
 		&\widetilde{\mu} = \frac{\mu}{(2^{-m}\hat{M})^{p-1}}, \quad \widetilde{\Gamma} = \Gamma (2^{-m}\hat{M})^\sigma, \quad \widetilde{s} = s (2^{-m}\hat{M})^{-1}, \quad D\widetilde{\Psi}( {B}_{\rho}(y)) \triangleq \int_{B_{\rho}(y)} |\text{div } \widetilde{a}(x, \tilde{u}, D\tilde{\psi})| \, dx.
 	\end{aligned}
 	\right.  
 \end{align*}
 By a similar argument to the one leading to \eqref{lm+3}, we derive
  \begin{align*}
  	&(|Du - (Du)_{B_{k+1}}|)_{B_{k+1}} \\
  	&\quad \le \frac{1}{4} \cdot (|Du - (Du)_{B_k}|)_{B_k} + 2^{mp} c_7 \hat{M} \left[ \frac{|\mu|(B_{k-2})}{\hat{M}^{p-1} r_{k-2}^{n-1}} + \frac{D\Psi(B_{k-2})}{\hat{M}^{p-1} r_{k-2}^{n-1}} + \Gamma (\hat{M} r_{k-2})^\sigma + \omega(r_{k-2}) \right]
  \end{align*}
  for $k \ge \ell+2$. Thus, we conclude that
  \begin{align*}
  	\sum_{k=\ell+2}^{\infty} &(|Du - (Du)_{B_k}|)_{B_k} \\
  	&\le 2(|Du - (Du)_{B_{\ell+2}}|)_{B_{\ell+2}} + 2^{mp+1} c_7 \hat{M} \sum_{k=1}^{\infty} \left[ \frac{|\mu|(B_k)}{\hat{M}^{p-1} r_k^{n-1}} + \frac{D\Psi(B_k)}{\hat{M}^{p-1} r_k^{n-1}} + \Gamma (\hat{M} r_k)^\sigma + \omega(r_k) \right].
  \end{align*}
 In view of the above estimate, together with \eqref{cm72} and \eqref{4.60}, it follows that
  \begin{align}
  	\sum_{k=\ell}^{\infty} (|Du - (Du)_{B_k}|)_{B_k} \le \frac{\epsilon^n \bar{M}}{2^{m+2}}. \label{4.61}
  \end{align}
 Considering $i \in [m, \ell]$ and $i > \ell$ separately, we obtain
  \begin{align}
  	|(Du)_{B_i} - (Du)_{B_\ell}| &\le 2 \sup_{k \in [m, \ell]} |(Du)_{B_k}| + \sum_{k=\ell}^{\infty} |(Du)_{B_{k+1}} - (Du)_{B_k}| \nonumber \\
  	&\le 2 \sup_{k \in [m, \ell]} |(Du)_{B_k}| + \epsilon^{-n} \sum_{k=\ell}^{\infty} (|Du - (Du)_{B_k}|)_{B_k}. \label{4.62}
  \end{align}
 In view of $\epsilon \in (0, 1/16]$ and the estimates \eqref{lmm}, \eqref{4.61}, and \eqref{4.62}, the condition $i, j \ge 2m$ yields
  \begin{align*}
  	|(Du)_{B_i} - (Du)_{B_j}| &\le |(Du)_{B_i} - (Du)_{B_\ell}| + |(Du)_{B_j} - (Du)_{B_\ell}| \\
  	&\le 4 \sup_{k \in [m, \ell]} |(Du)_{B_k}| + 2\epsilon^{-n} \sum_{k=\ell}^{\infty} (|Du - (Du)_{B_k}|)_{B_k} \le \frac{5\bar{M}}{2^m}.
  \end{align*}
  Thus, given the relation $B_i = B_{\epsilon^i}(y)$, the lemma follows provided that \eqref{lmm} holds.
  
  Finally, if \eqref{lm+1} holds, then the lemma follows trivially. Therefore, by considering the three cases \eqref{lm-1}, \eqref{lm+1}, and \eqref{lmm}, we conclude that the lemma holds in all instances.
\end{proof}
\begin{proof}[\textbf{Proof of Theorem \ref{thm3}}]
  Fix an integer $m \ge 3$. In view of Theorem \ref{thm2} and the hypotheses of Theorem \ref{thm3}, there exists a radius $r = r(\bar{M}, m, n, p, l, L, \mu, \omega, \Gamma, \sigma) \in (0, R]$ such that
  \begin{align}
  	(|Du - (Du)_{B_r(y)}|)_{B_r(y)} \le \frac{\bar{M}}{2^m}, \quad \ \ \ \  for\  y \in B_R, \tau \in (0, r], \label{4.63}
  \end{align}
	\begin{align*}
		\sup_{x \in B_R} c_8^p \left[ \frac{1}{\bar{M}^{p-1}} \int_0^{3r} \frac{|\mu|(B_\rho(x)) \,  }{\rho^{n-1}  }\frac{d\rho}{\rho}+\int_0^{3r} \frac{D\Psi(B_\rho(x)) \,  }{\rho^{n-1}  }\frac{d\rho}{\rho} + \frac{\Gamma (r \bar{M})^\sigma}{1 - (1/2)^\sigma} + \int_0^{3r} \frac{\omega(\rho) \, d\rho}{\rho} \right] \le \left( \frac{\epsilon^{5n}}{2^{2m+30}} \right)^p.
	\end{align*}
It then follows from Lemma \ref{lemzz} that
	\begin{align}
		|(Du)_{B_{\varepsilon^i r}(y)} - (Du)_{B_{\varepsilon^j r}(y)}| \le \frac{5\bar{M}}{2^m} \quad (y \in B_R, i, j \ge 2m). \label{4.64}
	\end{align}
Since $m \ge 3$ was chosen arbitrarily, by combining \eqref{4.63} and \eqref{4.64}, we can conclude that every point in $B_R$ is a Lebesgue point of $Du$.
	
	 Let $\bar{r} = \epsilon^{2m} r$. Then, in view of \eqref{4.64}, it follows that 
	\begin{align}
		|(Du)_{B_{\bar{r}}(y)} - Du(y)| = \lim_{j \to \infty} |(Du)_{B_{\varepsilon^m r}(y)} - (Du)_{B_{\varepsilon^j r}(y)}| \le \frac{5\bar{M}}{2^m} \quad (y \in B_R). \label{4.65}
	\end{align}
Furthermore, there exists $\rho = \rho(n, \bar{r}) \in (0, \bar{r}]$ such that for any $y, z \in B_R$ with $|y - z| < \rho$, we have
	\begin{align}
		|(Du)_{B_{\bar{r}}(y)} - (Du)_{B_{\bar{r}}(z)}| \le \frac{\bar{M}(|B_{\bar{r}}(y) \setminus B_{\bar{r}}(z)| + |B_{\bar{r}}(z) \setminus B_{\bar{r}}(y)|)}{|B_{\bar{r}}|} \le \frac{\bar{M}}{2^m}. \label{4.66}
	\end{align}
By the arbitrariness of $y, z \in B_R$ satisfying $|y - z| < \rho$, we deduce from \eqref{4.65} and \eqref{4.66} that
	\begin{align*}
		|Du(y) - Du(z)| &\le |Du(y) - (Du)_{B_{\bar{r}}(y)}| + |(Du)_{B_{\bar{r}}(y)} - (Du)_{B_{\bar{r}}(z)}| + |(Du)_{B_{\bar{r}}(z)} - Du(z)| \\
		&\le \frac{11\bar{M}}{2^m}
	\end{align*}
for every $y, z \in B_R$ with $|y - z| < \rho$. Owing to the arbitrariness of $m \ge 3$, Theorem \ref{thm3} follows.
\end{proof}

\section*{Acknowledgments}The authors are supported by 
the   Sichuan Natural Science Foundation Youth Fund Project (Grant No. 2025ZNSFSC0799) and  the  National Natural Science Foundation of China (Grant No.~12571103, 12401122 and 12571226), Natural Science Foundation of Tianjin (Grant No. 25JCQNJC01400) and Young Scientific and Technological Talents (Level Three) in Tianjin.

\end{document}